\newtheorem{theorem}{Theorem}[section]
\newtheorem{corollary}[theorem]{Corollary}
\newtheorem{lemma}[theorem]{Lemma}
\newtheorem{proposition}[theorem]{Proposition}
\newtheorem{definition}[theorem]{Definition}
\newtheorem{example}[theorem]{Example}
\theoremstyle{definition} \theoremstyle{remark}
\numberwithin{equation}{section}
\newcommand{\ttt}{\mathbf{t}}
\newcommand{\uuu}{\mathbf{u}}
\newcommand{\vvv}{\mathbf{v}}
\newcommand{\www}{\mathbf{w}}
\newcommand{\xxx}{\mathbf{x}}
\newcommand{\yyy}{\mathbf{y}}
\newcommand{\zzz}{\mathbf{z}}
\newcommand{\diam}{{\rm diam}}
\newcommand{\ed}{\mathfrak{E}}
\newcommand{\en}{\mathcal{E}}
\newcommand{\dom}{\mathcal{D}}
\newcommand{\supp}{{\rm supp}}
\newcommand{\ds}{\displaystyle}
\newcommand{\norm}{\mathcal{N}}
\begin{document}

\title{\large {\textbf {RANDOM WALKS AND INDUCED DIRICHLET FORMS \\
ON SELF-SIMILAR SETS}}}

\author{\small {SHI-LEI KONG, KA-SING LAU and TING-KAM LEONARD WONG}}

\date {}
\maketitle

\abstract{Let $K$ be a self-similar set satisfying the open set condition. Following Kaimanovich's elegant idea \cite{Ka}, it has been proved that on the symbolic space $X$ of $K$ a natural augmented tree structure ${\mathfrak E}$ exists; it is hyperbolic, and the hyperbolic boundary $\partial_HX$ with the Gromov metric is H\"older equivalent to $K$. In this paper we consider certain reversible random walks with return ratio $0< \lambda <1$ on $(X, {\mathfrak E})$. We show that the Martin boundary ${\mathcal M}$ can be identified with  $\partial_H X$ and $K$. With this setup and a device of Silverstein \cite {Si}, we obtain precise estimates of the Martin kernel and the Na\"{i}m kernel in terms of the Gromov product. Moreover, the Na\"{i}m kernel turns out to be a jump kernel satisfying the estimate $\Theta (\xi, \eta) \asymp |\xi-\eta|^{-(\alpha+ \beta)}$, where $\alpha$ is the Hausdorff dimension of $K$ and $\beta$ depends on $\lambda$. For suitable $\beta$, the kernel defines a regular non-local Dirichlet form on $K$. This extends the results of Kigami \cite {Ki2} concerning random walks on certain trees with Cantor-type sets as boundaries (see also \cite {BGPW}).}

\tableofcontents

\renewcommand{\thefootnote}{}

\footnote {{\it Keywords}: Dirichlet form, hyperbolic graph, Martin boundary, Na\"{i}m kernel, self-similar set, reversible random walk.}
\footnote {{\it 2010 Mathematics Subject Classification}. Primary 28A80, 60J10; Secondary 60J50.}
\footnote{The research is supported in part by the HKRGC grant and the NNSF of China (no. 11371382).}

\renewcommand{\thefootnote}{\arabic{footnote}}
\setcounter{footnote}{0}

\bigskip

\section{Introduction}
\label{sec:1}

\noindent Let ${\mathbb D}$ be the open unit disk, and let ${\mathbb T}$ be the boundary circle parameterized by  $\{\theta: 0 \leq \theta <2\pi\}$. Let
\begin{equation} \label{eq1.1}
{\mathcal E}_{\mathbb D} (u, v)  = \int_{\mathbb D} \nabla u (x)  \nabla v(x) dx
\end{equation}
be the standard Dirichlet form on ${\mathbb D}$.   In classical analysis, it is well-known that a function $\varphi \in L^1({\mathbb T})$ can be extended to a harmonic functions on ${\mathbb D}$ via the Poisson integral
$$
(H\varphi)(x) = \int_{\mathbb T} \varphi (\theta) K(x, \theta)d \theta,  \qquad x \in {\mathbb D},
$$
where $K(x, \theta)$ is the Poisson kernel. Furthermore, there is an induced Dirichlet form on ${\mathbb T}$ defined by
$$
{\mathcal E}_{\mathbb T} (\varphi, \psi) = {\mathcal E}_{\mathbb D} (H\varphi, H\psi).
$$
Indeed, it can be shown that
\begin{equation} \label{eq1.2}
{\mathcal E}_{\mathbb T} (\varphi, \psi) = \frac 1{16 \pi}\int_ {\mathbb T} \int_ {\mathbb T} (\varphi(\theta) -\varphi (\theta'))(\psi(\theta) -\psi(\theta'))\ \frac 1{\sin^2(\frac {\theta- \theta'}2)}\ d\theta d\theta'.
\end{equation}
This integral is called the {\it Douglas integral} (see \cite[Section 1.2]{FOT}). From the probabilistic point of view, the Dirichlet form in \eqref{eq1.1} is associated with a Brownian motion on ${\mathbb D}$. The hitting distribution of the Brownian motion at the boundary ${\mathbb T}$ (starting from $0$)  is the uniform distribution $\frac {d\theta}{2 \pi}$; the induced Dirichlet form in \eqref{eq1.2} corresponds to the reflecting Brownian motion on $\overline {\mathbb D}$ time-changed by its local time on ${\mathbb T}$, and defines a jump process on ${\mathbb T}$ which is a Cauchy process \cite{CF}.

\medskip

The above consideration has a counterpart in Markov chain theory. Let $\{Z_n\}_{n=0}^\infty$ be a transient Markov chain on an infinite discrete set $X$ with transition probability $P$. According to the discrete potential theory of Markov chains \cite{Dy,Wo1,Wo2}, a chain starting at the reference point $\vartheta$ will converge to the {\it Martin boundary} ${\mathcal M}$ at infinity, and defines a hitting distribution $\nu=\nu_\vartheta$ on ${\mathcal M}$. Also, there is a Martin kernel $K(x, \xi), \  x\in X, \xi \in {\mathcal M}$, which plays the same role as the Poisson kernel: if we define
$$
(H \varphi) (x) = \int_{\mathcal M}\varphi (\xi) K(x, \xi) d\nu (\xi),  \qquad x \in X,
$$
then $u=H\varphi$ is harmonic on $X$, i.e., $u=Pu$.
We call a Markov chain {\it reversible} if the transition probability is of the form $P(x, y) = c(x, y)/m(x)$,  where $c(x,y) =c(y,x) \geq 0$, and $m(x) = \sum_{y\in X} c(x,y)$. We define a graph energy ${\mathcal E}_X$ on $X$ by
\begin{equation} \label{eq1.3}
{\mathcal E}_X[u] = \frac 12\sum_{x, y \in X} c(x, y) |u(x) -u(y)|^2.
\end{equation}
In \cite{Si}, Silverstein showed that for such  Markov chain, there is an energy form ${\mathcal E}_{\mathcal M}$ on ${\mathcal M}$ that satisfies
$$
 {\mathcal E}_{\mathcal M} [\varphi] =  {\mathcal E}_X [H\varphi],
$$
and ${\mathcal E}_{\mathcal M}$ has the expression
\begin{equation}\label{eq1.4}
{\mathcal E}_{\mathcal M} [\varphi] = \int_{\mathcal M}\int_{\mathcal M} |\varphi(\xi) - \varphi (\eta)|^2 \,\Theta (\xi, \eta)\,d\nu (\xi) d\nu (\eta).
\end{equation}
Here $\Theta (\cdot, \cdot)$ is called the {\it Na\"{i}m's $\Theta$-kernel} (or simply, the {\it Na\"{i}m kernel}). It was first introduced in classical potential theory by Na\"{i}m \cite{Na}, and a general Douglas integral formula (corresponding to \eqref{eq1.2}) on Euclidean domain was proved by Doob \cite{Do}. Recently, Georgakopoulos introduced a class of ``group-walk random graphs", and he outlined a study of the Poisson boundary and the Na\"{i}m kernel using electrical network theory in \cite{Ge}.

\bigskip

The domain of ${\mathcal E}_{\mathcal M}$ in \eqref{eq1.4} consists of  square integrable functions $\varphi$ such that ${\mathcal E}_{\mathcal M}[\varphi] < \infty$. If the domain is dense in $L^2({\mathcal M}, \nu)$, then ${\mathcal E}_{\mathcal M}$ defines a non-local Dirichlet form. In the analysis of fractals, there is a large literature on the study of local and non-local Dirichlet forms as well as their associated heat kernels on self-similar sets, $d$-sets and more general metric measure spaces \cite{CF, CK, GHL1, GHL2,GHL3,J,Ki1,Ki2,P1,P2,St,Str}. In many cases, a non-local form can be obtained by subordination of a local form, and has a jump kernel with order $|\xi-\eta|^{-(\alpha + \beta)}$, where $\alpha$ is the Hausdorff dimension of the underlying set and $\beta$ is the walk dimension of the corresponding stable-like process \cite{St,CK}.

\medskip

For a self-similar set $K$ generated by an iterated function system (IFS), there is a symbolic space (coding space) $\Sigma^*$ which gives a convenient representation of any $\xi \in K$ (analogous to the dyadic expansion of a real number). If the IFS satisfies the {\it open set condition} (OSC), then the representation is unique for generic points of $K$. Recently, there are studies of random walks on $\Sigma^*$ such that $K$ can be identified with the Martin boundary ${\mathcal M}$ under the canonical homeomorphism \cite{DS1,DS2,DS3,JLW, Ka, LN1, LN2, LW2}. In particular, Kaimanovich  \cite{Ka} used the Sierpi\'{n}ski gasket to introduce a natural {\it augmented tree} (Sierpi\'{n}ski graph) by adding horizontal edges on $\Sigma^*$ according to the intersections of the cells from the IFS. This work brings into play the hyperbolic structure and hyperbolic boundary which are powerful tool for studying random walks on the graph. Kaimanovich's augmented tree was extended to general self-similar sets in \cite{LW1, LW3}.

\bigskip

In this paper we will study random walks on the augmented trees $(\Sigma^*, {\mathfrak E})$ and their induced Dirichlet forms on $K$. We investigate a class of reversible random walks on $(\Sigma^*,{\mathfrak E})$ so that

\vspace {0.2cm}
\noindent \hspace {0.3cm} (i) the self-similar set $K$ can be identified with the Martin boundary, and

\vspace{0.1cm}

\noindent \hspace {0.3cm}(ii) the hitting distribution $\nu = \nu_\vartheta$ is the normalized Hausdorff measure on $K$, and the Na\"{i}m kernel is of order $|\xi-\eta|^{-(\alpha + \beta)}$.

\medskip

As a special case, Kigami \cite{Ki2} studied reversible random walks on trees where the Martin boundaries are Cantor-type sets. He used the resistance metric to give explicit expressions of the hitting distribution $\nu$ and the Martin kernel $K(x,y)$. Also, under the volume doubling property of $\nu$ with respect to the resistance metric, he studied the associated jump process and estimates of the heat kernel. These results were extended to the non-compact case in \cite{Ki3}. Recently, a duality between reversible random walks on trees and certain jump processes on the boundaries was studied in detail in \cite{BGPW}.

\bigskip

 Let $\{S_i\}_{i=1}^N$ denote an IFS of contractive similitudes, and let $K$ denote the self-similar set generated. For simplicity, here we only state the case where the IFS is {\it homogenous} (all the maps $S_i$ have the same contraction ratio $r$). Following standard notations, we let $\Sigma = \{1, 2, \ldots, N\}$, $\Sigma^n  =
\{\xxx=i_1 \cdots i_n: i_1, \cdots, i_n \in \Sigma\}, \ n \geq 1$
(by convention $\Sigma^0  =\{\vartheta\}$) and use $|\xxx|$ to denote the length of $\xxx$; also we let $\Sigma^{\ast}:=\bigcup_{n=0}^{\infty}\Sigma^n$, the set of finite words, and  $\Sigma^{\infty}:=\{ i_1 i_2 \cdots :\
i_1, i_2, \cdots \in \Sigma\}$, the set of infinite words.
There is a natural surjection $\kappa: \Sigma^{\infty} \rightarrow K$
defined by
\begin{equation*}
\{\kappa(\omega)\} = {\bigcap}_{m \geq 0} \ S_{i_1 i_2 \cdots i_m}(K), \qquad \omega = i_1 i_2 \cdots \in \Sigma^{\infty},
\end{equation*}
where $S_{i_1 i_2 \cdots i_m} = S_{i_1} \circ S_{i_2} \circ \cdots \circ S_{i_m}$. Hence each $\xi \in K$ admits a symbolic representation (coding) $\omega \in \Sigma^\infty$.

\medskip
The symbolic space $\Sigma^*$ has a natural tree structure ${\mathfrak E}_v$ which is referred to as the set of ``vertical edges". To consider (i) above, we enrich this simple graph by adding more ``horizontal edges", denoted by ${\mathfrak E}_h$, to reflect the behaviors of the neighboring cells of $K$ at each level:
\begin {equation} \label{eq1.5}
(\xxx, \yyy) \in {\mathfrak E}_h\ \hbox {if} \  \xxx \neq \yyy, \
|\xxx|=|\yyy| {\it \text{ and }} \inf\limits_{\xi, \eta \in K}|S_{\xxx}(\xi) - S_{\yyy}(\eta)| \leq \gamma \cdot r^{|\xxx|},
\end{equation}
 where $\gamma > 0$ is any fixed constant (see Definition \eqref {de3.3}). We write $X_n = \bigcup_{j=1}^n\Sigma^j$, and $X = \Sigma^*$ for brevity, and
let ${\mathfrak E} = {\mathfrak E}_v \cup {\mathfrak E}_h$. We call $(X, {\mathfrak E})$ the {\it augmented tree} of $K$. This notion was invented by Kaimanovich in \cite{Ka}, where the last condition in \eqref{eq1.5} was replaced by $S_\xxx(K) \cap S_\yyy(K) \neq \emptyset$.  For general IFS (even with the OSC), there are certain technical difficulties to identify $K$ with the graph at infinity [LW1]; the modification in \eqref{eq1.5} is from [LW3] which avoids the superfluous conditions in [LW1].

\medskip

Recall that for a hyperbolic graph $X$, there is a {\it Gromov product} $(x|y)$ for $x, y \in X$, a {\it Gromov metric} $\varrho_a(x, y) = e^{-a(x|y)}$ on $X$, and a {\it hyperbolic boundary} $\partial_HX$ with $\varrho_a(\cdot, \cdot )$ extended to $\partial_HX$ (see Section \ref{sec:2} below for more details).

\medskip

\begin{theorem} \label{th1.1} \hspace{-2mm} {\rm\cite{Ka,LW1,LW3}}~For any IFS the augmented tree $(X,{\mathfrak E})$ is hyperbolic.
Moreover, there is a canonical identification ${\iota}: \partial_H X \rightarrow K$ (independent of $\gamma>0$), such that  $K$ and  $\partial_H X$ are H\"{o}lder equivalent, i.e., $\varrho_a(\xi, \eta) \asymp |{\iota}(\xi)-{\iota}(\eta)|^{-a/\log r}$.
\end{theorem}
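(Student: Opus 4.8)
The plan is to isolate a single structural property of the horizontal edges and then read off everything --- hyperbolicity, the identification of $\partial_HX$ with $K$, and the H\"older estimate --- from one formula for the Gromov product based at $\vartheta$. The structural input is a \emph{parent monotonicity lemma}: writing $\xxx^-\in\Sigma^{n-1}$ for the predecessor of $\xxx\in\Sigma^n$ and $\xxx_\ell$ for the level-$\ell$ ancestor, the inclusion $S_\xxx(K)\subseteq S_{\xxx^-}(K)$ together with \eqref{eq1.5} and $0<r<1$ gives at once that $(\xxx,\yyy)\in{\mathfrak E}_h$ forces $\xxx^-=\yyy^-$ or $(\xxx^-,\yyy^-)\in{\mathfrak E}_h$. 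Iterating, any path in $(X,{\mathfrak E})$ staying in levels $\ge\ell$ projects, under $\,\cdot\mapsto(\cdot)_\ell$, to a walk inside the horizontal graph $(\Sigma^\ell,{\mathfrak E}_h)$; equivalently, horizontal neighbourhoods only merge as one moves toward $\vartheta$. Since this uses nothing beyond \eqref{eq1.5}, the conclusion will hold for \emph{any} IFS; note that the horizontal graphs may have unbounded vertex degree (e.g.\ when some $S_i$ coincide), but nothing below needs bounded degree.

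The heart of the matter is the estimate, uniform in $\xxx,\yyy\in X$,
$$(\xxx|\yyy)_\vartheta\;=\;\log_r\Bigl(\max\bigl\{\,{\rm dist}\bigl(S_\xxx(K),S_\yyy(K)\bigr),\;r^{|\xxx|},\;r^{|\yyy|}\,\bigr\}\Bigr)\;+\;O(1),$$
where I use $d(\vartheta,\xxx)=|\xxx|$ (vertical paths are geodesics, as level changes by at most one per edge) so that $(\xxx|\yyy)_\vartheta=\tfrac12(|\xxx|+|\yyy|-d(\xxx,\yyy))$. The direction ``$\ge$'' is elementary: if $\ell$ is the largest level $\le\min\{|\xxx|,|\yyy|\}$ with $\gamma r^\ell\ge{\rm dist}(S_\xxx(K),S_\yyy(K))$, then the larger ancestor cells satisfy ${\rm dist}(S_{\xxx_\ell}(K),S_{\yyy_\ell}(K))\le\gamma r^\ell$, so $\xxx_\ell,\yyy_\ell$ coincide or are horizontally adjacent, and the canonical path --- up to level $\ell$, one horizontal step, down again --- has length $\le(|\xxx|-\ell)+(|\yyy|-\ell)+1$. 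The reverse inequality is where I expect the real work, and it is the main obstacle of the theorem: given any geodesic from $\xxx$ to $\yyy$, let $\ell^*$ be its lowest level; by the monotonicity lemma its horizontal portion realises a walk in $(\Sigma^{\ell^*},{\mathfrak E}_h)$ from $\xxx_{\ell^*}$ to $\yyy_{\ell^*}$, and it makes at least $(|\xxx|-\ell^*)+(|\yyy|-\ell^*)$ vertical moves. If $\ell^*$ exceeds $\ell$ by more than an absolute constant, then $S_{\xxx_{\ell^*}}(K)$ and $S_{\yyy_{\ell^*}}(K)$ remain at distance $\gtrsim r^\ell$ (they contract toward fixed points while staying comparable in distance to the level-$\ell$ cells), whereas a single horizontal edge at level $\ell^*$ moves a point only by $O(r^{\ell^*})$; hence that walk has length $\gtrsim(1/r)^{\ell^*-\ell}$, which dwarfs the $2(\ell^*-\ell)$ vertical steps saved by descending so far. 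So no geodesic beats the canonical path by more than $O(1)$. Making this precise --- pinning down the absolute constants, treating the capped case $\ell=\min\{|\xxx|,|\yyy|\}$, and projecting the geodesic cleanly --- is the delicate part; the rest is bookkeeping on top of the monotonicity lemma.

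Granting the formula, the remaining conclusions are formal. \emph{Hyperbolicity}: writing $\delta(\xxx,\yyy)$ for the quantity inside the logarithm, the triangle inequality in the ambient space and ${\rm diam}\,S_\yyy(K)=r^{|\yyy|}{\rm diam}\,K\le({\rm diam}\,K)\,\delta(\xxx,\yyy)$ give $\delta(\xxx,\zzz)\le C\,\delta(\xxx,\yyy)+C\,\delta(\yyy,\zzz)\le 2C\max\{\delta(\xxx,\yyy),\delta(\yyy,\zzz)\}$ for an absolute $C$; applying $\log_r$ (which reverses inequalities and turns the $\max$ into a $\min$) together with the formula yields the four-point inequality $(\xxx|\zzz)_\vartheta\ge\min\{(\xxx|\yyy)_\vartheta,(\yyy|\zzz)_\vartheta\}-\delta'$ for a uniform $\delta'$, which suffices for a geodesic space. \emph{Boundary}: the formula shows that every point of $\partial_HX$ is the limit of a vertical ray $(\omega_n)_n$ with $\omega\in\Sigma^\infty$ (here $\omega_n$ is the length-$n$ prefix), and that two such rays converge to the same point precisely when $\kappa(\omega)=\kappa(\omega')$ --- because $(\omega_n|\omega'_n)_\vartheta=\min\{\log_r{\rm dist}(S_{\omega_n}(K),S_{\omega'_n}(K)),\,n\}+O(1)$ while ${\rm dist}(S_{\omega_n}(K),S_{\omega'_n}(K))\to|\kappa(\omega)-\kappa(\omega')|$ as the cells contract to $\kappa(\omega),\kappa(\omega')$; hence $\kappa$ descends to a bijection ${\iota}\colon\partial_HX\to K$ making no reference to $\gamma$. \emph{H\"older equivalence}: letting $n\to\infty$ along these rays gives $(\xi|\eta)=\log_r|{\iota}(\xi)-{\iota}(\eta)|+O(1)$ for $\xi\ne\eta$, so $\varrho_a(\xi,\eta)=e^{-a(\xi|\eta)}\asymp e^{-a\log_r|{\iota}(\xi)-{\iota}(\eta)|}=|{\iota}(\xi)-{\iota}(\eta)|^{-a/\log r}$, which is the claimed estimate and in particular makes ${\iota}$ a homeomorphism.
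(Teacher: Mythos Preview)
The paper does not actually prove Theorem~\ref{th1.1}; it is quoted from \cite{Ka,LW1,LW3} and restated verbatim as Theorem~\ref{th3.6} without argument. What the paper does supply is the surrounding framework: the parent-monotonicity property you isolate is precisely condition~\eqref{eq3.2} defining a pre-augmented tree (Definition~\ref{de3.1}), and Proposition~\ref{th3.2} (taken from \cite{LW1}) records that such a tree is hyperbolic if and only if horizontal geodesics have uniformly bounded length. The cited references establish hyperbolicity by proving this horizontal bound and then invoking the criterion; the identification $\partial_HX\cong K$ and the H\"older estimate are handled afterwards.

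Your sketch is correct and is the same computation repackaged. The core step --- that a geodesic bottoming out at level $\ell^*>\ell$ must contain $\gtrsim r^{-(\ell^*-\ell)}$ horizontal edges, because its level-$\ell^*$ projection (vertical edges project to loops, horizontal edges to edges or loops by monotonicity) is a walk covering Euclidean distance $\gtrsim r^\ell$ in steps of size $O(r^{\ell^*})$ --- is precisely the horizontal-geodesic bound of \cite{LW1,LW3}. What you gain by aiming directly for the formula
\[
(\xxx|\yyy)=\log_r\max\bigl\{\mathrm{dist}(S_\xxx(K),S_\yyy(K)),\,r^{|\xxx|},\,r^{|\yyy|}\bigr\}+O(1)
\]
is that hyperbolicity (via the quasi-ultrametric inequality for the max), the boundary identification, and the H\"older estimate all drop out of one statement, whereas the cited route treats these sequentially through Proposition~\ref{th3.2} and the canonical-geodesic formula~\eqref{eq3.1'}. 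Neither route requires the OSC or bounded degree, as you correctly observe.
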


\bigskip
For a homogenous IFS $\{S_i\}_{i=1}^N$ satisfying the open set condition (OSC), the self-similar set has Hausdorff dimension $\alpha = \log N/ |\log r|$. For $0<\lambda<1$,  we define a class of reversible random walks on $(X, {\mathfrak E})$  with conductance
$$
c(\xxx, \xxx^-)  = (\lambda^{-1} r^\alpha)^{|\xxx |}, \qquad
\xxx \in X.
$$
and $c(\xxx, \yyy) \asymp c(\xxx, \xxx^-)$ for $(\xxx, \yyy) \in \ed_h$ where $\xxx^-$ is the parent of $\xxx$ (here $\asymp$ means the two terms dominate each other by two constants $C_1$, $C_2$ independent of the variables), and call it a {\it $\lambda$-natural random walk} ($\lambda$-NRW) (see Definition \ref{th4.4}). Here $r^\alpha$ corresponds to the natural weight $p_i = r^\alpha$ of the IFS $\{S_i\}_{i=1}^N$ which generates, as a self-similar measure, the normalized Hausdorff measure ${\mathcal H}^\alpha$ on $K$;  the parameter $\lambda$ is the ratio of the probabilities for the walk to go upward or downward at each $\xxx$ (see \eqref{eq4.1} and Section \ref{sec:4}).
Note that a simple random walk (SRW) has transition probability
$$
P(\xxx,  \yyy) = \frac 1{{\rm deg}(\xxx)} \qquad  \hbox {for}  \quad (x,y) \in \ed,
 $$
 where ${\rm deg}(\xxx)$ is the number of edges joining $\xxx$. It is easy to see that a SRW is a $1/N$-NRW  with $c(\xxx, \xxx^-) = c(\xxx, \yyy) =1$ for $(\xxx,\yyy) \in \ed_h$. Using Theorem \ref{th1.1} and a well-known result of Ancona \cite{An1,An2,Wo1} on uniformly irreducible random walks on hyperbolic graphs, we prove the following theorem (Theorem \ref {th5.1}) which extends \cite[Theorem 4.7]{Ka} for the Sierpi\'{n}ski graph with the simple random walk.
\medskip

\begin{theorem} \label{th1.2} Suppose in addition,  the IFS satisfies the OSC, let $\{Z_n\}_{n=0}^\infty$ be a $\lambda$-NRW on  $(X,{\mathfrak E})$. Then the hyperbolic boundary $\partial_H X$, the Martin boundary $\mathcal{M} $  and the self-similar set $K$ are all homeomorphic under the canonical mapping.
\end{theorem}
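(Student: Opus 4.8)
The plan is to identify $\mathcal{M}$ with $\partial_H X$ by means of Ancona's theorem on hyperbolic graphs, and then to replace $\partial_H X$ by $K$ using Theorem~\ref{th1.1}. Recall Ancona's theorem (\cite{An1,An2}; see also \cite[Theorem~27.1]{Wo1}): if $Y$ is a hyperbolic graph of bounded geometry and $P$ is a uniformly irreducible random walk of bounded range on $Y$ with spectral radius $\rho(P)<1$, then the Martin boundary of $P$ is homeomorphic, via the canonical map restricting to the identity on $Y$, to $\partial_H Y$. Thus the proof reduces to checking that a $\lambda$-NRW on $(X,\mathfrak{E})$ satisfies these hypotheses, and then composing the two canonical homeomorphisms.

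Two of the required conditions are essentially formal. By Theorem~\ref{th1.1} the augmented tree $(X,\mathfrak{E})$ is hyperbolic, and it is of bounded geometry because every $\xxx$ has $N+1$ vertical neighbours and, by the open set condition, at most a fixed number of horizontal neighbours. For the walk, from $c(\xxx,\xxx^-)=(\lambda^{-1}r^\alpha)^{|\xxx|}$ one gets $\sum_i c(\xxx,\xxx i)=\lambda^{-1}(\lambda^{-1}r^\alpha)^{|\xxx|}$ using $Nr^\alpha=1$; together with $c(\xxx,\yyy)\asymp(\lambda^{-1}r^\alpha)^{|\xxx|}$ on $\mathfrak{E}_h$ and the bounded horizontal degree, this yields $m(\xxx)\asymp(\lambda^{-1}r^\alpha)^{|\xxx|}$ uniformly in $\xxx$, hence $P(\xxx,\yyy)=c(\xxx,\yyy)/m(\xxx)\asymp 1$ for every edge $(\xxx,\yyy)\in\mathfrak{E}$. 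Thus $P$ is uniformly irreducible, and its range is $1$ since the walk moves only along edges of $\mathfrak{E}$.

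The substantive step, and the one where the assumption $\lambda<1$ enters, is the bound $\rho(P)<1$. I would prove it by producing a positive $P$-superharmonic function with a uniform contraction factor, namely $h(\xxx)=\lambda^{|\xxx|/2}$. Writing $a(\xxx)=P(\xxx,\xxx^-)$, $b(\xxx)=\sum_i P(\xxx,\xxx i)$ and $c(\xxx)=\sum_{(\xxx,\yyy)\in\mathfrak{E}_h}P(\xxx,\yyy)$, the vertical conductances give $a(\xxx)=\lambda\,b(\xxx)$, so $a+b+c=1$ forces $b(\xxx)=(1-c(\xxx))/(1+\lambda)$; and since horizontal edges do not change the level,
\[
\frac{(Ph)(\xxx)}{h(\xxx)}=a(\xxx)\lambda^{-1/2}+b(\xxx)\lambda^{1/2}+c(\xxx)=2\sqrt{\lambda}\,b(\xxx)+c(\xxx)=\frac{2\sqrt{\lambda}}{1+\lambda}\bigl(1-c(\xxx)\bigr)+c(\xxx).
\]
Since $2\sqrt{\lambda}<1+\lambda$ for $\lambda\neq1$, and $c(\xxx)$ is bounded away from $1$ uniformly (by the uniform irreducibility just established), the right-hand side is $\le\theta$ for a constant $\theta<1$ independent of $\xxx$, i.e.\ $Ph\le\theta h$. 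Iterating gives $P^{(n)}(\xxx,\yyy)\le\theta^n h(\xxx)/h(\yyy)$, so the Green function $G(\xxx,\yyy\mid t)=\sum_n P^{(n)}(\xxx,\yyy)\,t^n$ converges for all $t<1/\theta$; therefore the walk is transient and $\rho(P)\le\theta<1$.

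With every hypothesis verified, Ancona's theorem yields a homeomorphism $\mathcal{M}\to\partial_H X$ that restricts to the identity on $X$; composing it with the canonical H\"older homeomorphism $\iota:\partial_H X\to K$ of Theorem~\ref{th1.1} produces the required identification $\mathcal{M}\cong\partial_H X\cong K$. These identifications are canonical in that all three are simultaneously compatible with the identity on $X$ and with the coding map $\kappa:\Sigma^\infty\to K$ (an infinite word $\omega$, its geodesic ray in $\partial_H X$, and the point $\kappa(\omega)\in K$ correspond, and a transient trajectory converges in $\mathcal{M}$ to the point coded by the word read off along it). The only step demanding a genuine argument is the spectral-radius bound $\rho(P)<1$; the remaining verifications are routine once Theorem~\ref{th1.1} and Ancona's theorem are available.
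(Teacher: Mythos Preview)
Your proof is correct and follows the same overall architecture as the paper: verify the hypotheses of Ancona's theorem (hyperbolicity, bounded degree, uniform irreducibility, bounded range, spectral radius $<1$) and then compose with the identification $\partial_H X \cong K$ from Theorem~\ref{th1.1}. The routine verifications match essentially verbatim.

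The genuine difference lies in how you establish $\rho(P)<1$. The paper proceeds via Proposition~\ref{th2.7}: it proves a strong isoperimetric inequality $m(A)\le\eta\,c(\partial A)$ first on the underlying tree $(X,\mathfrak{E}_v)$ by induction on the deepest level of $A$, and then transfers it to the full augmented graph using the bounded degree. You instead exhibit an explicit positive eigenfunction-type bound: with $h(\xxx)=\lambda^{|\xxx|/2}$, the constant return ratio forces $(Ph)/h = \tfrac{2\sqrt{\lambda}}{1+\lambda}(1-c(\xxx))+c(\xxx)\le\theta<1$, whence $P^n(\xxx,\yyy)\le\theta^n h(\xxx)/h(\yyy)$ and $\rho(P)\le\theta$. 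Your route is shorter and more transparent in the homogeneous setting of Theorem~\ref{th1.2}, exploiting directly that $h$ depends only on $|\xxx|$ and that the level process is a biased birth--death chain; the paper's isoperimetric argument is a bit heavier but is stated for the more general $\lambda$-QNRW on any hyperbolic pre-augmented tree of bounded degree (Theorem~\ref{th5.1}), where the number of children can vary. Either argument suffices here.
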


\bigskip

The above theorem allows us to identify $K$ with the two boundaries. More importantly, we are able to use the hyperbolic structure to give sharp estimates of the Martin and Na\"{i}m kernels for the $\lambda$-NRW on $(X, {\mathfrak E})$.

\medskip

\begin{theorem} \label{th1.3} The hitting distribution $\nu=\nu_\vartheta$ of the  $\lambda$-NRW on $(X, {\mathfrak E})$ is the normalized Hausdorff measure  ${\mathcal H}^\alpha$ on $K$, and the Martin kernel satisfies the estimate
$$
K (\xxx, \xi)\asymp \lambda^{|\xxx | - (\xxx | \xi)} r^{-\alpha (\xxx |\xi)}, \qquad \xxx \in X, \ \ \xi \in K.
$$
\end{theorem}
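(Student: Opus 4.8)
The plan is to reduce both claims to two–sided estimates for the Green function $G(\cdot ,\cdot )$ of the $\lambda$-NRW. By Theorem \ref{th1.2} the walk is uniformly irreducible on the hyperbolic graph $(X,\mathfrak E)$, so Ancona's inequality holds: $G(\uuu ,\vvv )\asymp G(\uuu ,\ppp )G(\ppp ,\vvv )$ whenever $\ppp$ lies within a bounded distance of a geodesic from $\uuu$ to $\vvv$. Two uniform facts feed in. First, the downward drift ($\lambda <1$) together with the bounded degree of the augmented tree makes the escape probability from every vertex bounded below, so $G(\zzz ,\zzz )\asymp 1$ and hence $G\asymp F$, where $F$ denotes the hitting probability. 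Second, since $Nr^{\alpha}=1$, the explicit conductances give $m(\zzz )\asymp (\lambda^{-1}r^{\alpha})^{|\zzz |}$ uniformly (the $N$ downward edges dominate, the boundedly many horizontal ones only change the constant), and in particular $m(\vartheta )/m(\xxx )\asymp (\lambda^{-1}r^{\alpha})^{-|\xxx |}$.

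Now fix $\xxx\in X$ and $\xi\in K\cong\partial_HX$, and pick a vertex $\ppp$ at distance $(\xxx |\xi )$ from $\vartheta$ on a geodesic ray from $\vartheta$ to $\xi$; since $d(\vartheta ,\cdot )=|\cdot |$ we have $|\ppp |=(\xxx |\xi )$, and hyperbolicity of $(X,\mathfrak E)$ puts $\ppp$ within a bounded distance of $[\vartheta ,\xxx ]$ (so $\ppp$ is, up to bounded distance, the ancestor of $\xxx$ at level $(\xxx |\xi )$) and, for $\yyy$ far along the ray, of $[\xxx ,\yyy ]$. Ancona's inequality and $G\asymp F$ then give
$$
K(\xxx ,\xi )=\lim_{\yyy\to\xi}\frac{G(\xxx ,\yyy )}{G(\vartheta ,\yyy )}\asymp\frac{G(\xxx ,\ppp )}{G(\vartheta ,\ppp )}\asymp\frac{m(\vartheta )}{m(\xxx )}\cdot\frac{F(\ppp ,\xxx )}{F(\ppp ,\vartheta )}\asymp(\lambda^{-1}r^{\alpha})^{-|\xxx |}\,\frac{F(\ppp ,\xxx )}{F(\ppp ,\vartheta )},
$$
the middle step using the reversibility identity $m(\uuu )G(\uuu ,\vvv )=m(\vvv )G(\vvv ,\uuu )$. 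Granting the \emph{uniform} estimates $F(\ppp ,\vartheta )\asymp\lambda^{|\ppp |}$ (the walk from $\ppp$ ascends $|\ppp |$ levels to $\vartheta$) and $F(\ppp ,\xxx )\asymp r^{\alpha (|\xxx |-|\ppp |)}$ (it descends $|\xxx |-|\ppp |$ levels onto the specific vertex $\xxx$, a Harnack step replacing $\ppp$ by the nearby ancestor of $\xxx$), substitution yields $K(\xxx ,\xi )\asymp\lambda^{|\xxx |-|\ppp |}r^{-\alpha |\ppp |}$, which is the assertion as $|\ppp |=(\xxx |\xi )$.

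I expect these ascent/descent bounds to be the \textbf{main obstacle}: a naive iteration of Ancona's inequality, or of the one–step bounds $P(\zzz ,\zzz^{-})\asymp\lambda$ and $P(\zzz ,\zzz i)\asymp r^{\alpha}$, multiplies the constant once per level and so produces an error growing geometrically in the number of levels, which is useless here. To obtain constants that do not degrade with depth I would use that the augmented tree of a homogeneous IFS with the OSC has only finitely many \emph{cone types} — the isomorphism type of the subtree $(T_{\zzz },\mathfrak E|_{T_{\zzz }})$ together with its horizontal interface takes finitely many values — so the ascent and descent probabilities satisfy a finite closed system of algebraic relations, one unknown per cone type, whose solutions are bounded away from $0$ and $1$; equivalently one compares the level process with an explicit birth–death chain. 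Here the exact value $c(\zzz ,\zzz^{-})=(\lambda^{-1}r^{\alpha})^{|\zzz |}$ is essential: it makes $P(\zzz ,\zzz^{-})=\lambda\mu_{\zzz }$ and $P(\zzz ,\zzz i)=r^{\alpha}\mu_{\zzz }$ with the \emph{same} vertex factor $\mu_{\zzz }\asymp 1$, and these factors cancel in all the ratios above.

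Finally, for the hitting distribution, the descent estimate gives $\nu_\vartheta(S_{\xxx }(K))\ge F(\vartheta ,\xxx )\,\mathbb P_{\xxx }[Z_\infty\in S_{\xxx }(K)]\gtrsim r^{\alpha |\xxx |}$, and together with the Martin kernel bound this gives $\nu_\vartheta(S_{\xxx }(K))\asymp r^{\alpha |\xxx |}$, so $\nu_\vartheta$ and the normalized $\mathcal H^{\alpha}$ are mutually comparable. To upgrade comparability to equality I would use the self–similarity of the $\lambda$-NRW (Definition \ref{th4.4}): for each $i$ the map $\zzz\mapsto i\zzz$ is an isomorphism of $(X,\mathfrak E)$ onto the subtree $(T_i,\mathfrak E|_{T_i})$ that rescales every conductance by the fixed factor $\lambda^{-1}r^{\alpha}$ (the horizontal interface transfers correctly because $S_i$ contracts distances by $r$ and the threshold in \eqref{eq1.5} scales by the same factor), so the walk within $T_i$ is a rescaled copy of the original $\lambda$-NRW. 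Decomposing the walk at its excursions across the interface of $T_i$ then shows $\nu_\vartheta(S_i(A))=r^{\alpha}\nu_\vartheta(A)$ for every Borel $A\subseteq K$; hence $\nu_\vartheta$ is the self–similar probability measure with the natural weights $p_i=r^{\alpha}$, and by uniqueness $\nu_\vartheta=\mathcal H^{\alpha}$.
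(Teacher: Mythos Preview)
Your Martin-kernel argument is structurally the same as the paper's and the overall computation is correct, but you miss that the ``main obstacle'' you identify is already solved by the ingredients you have written down. The ascent estimate is in fact an exact identity $F(\zzz,\vartheta)=\lambda^{|\zzz|}$ (Proposition~\ref{th4.1}), since under condition $(R_\lambda)$ the level process embedded at level-change times is precisely a biased nearest-neighbour walk on the nonnegative integers---this is your ``birth--death'' remark. The descent estimate then follows without any cone-type machinery: from reversibility and your own estimate $m(\zzz)\asymp(\lambda^{-1}r^{\alpha})^{|\zzz|}$,
\[
G(\vartheta,\zzz)=\frac{m(\zzz)}{m(\vartheta)}\,F(\zzz,\vartheta)\,G(\vartheta,\vartheta)\asymp(\lambda^{-1}r^{\alpha})^{|\zzz|}\cdot\lambda^{|\zzz|}=r^{\alpha|\zzz|},
\]
and dividing by $G(\zzz,\zzz)\asymp 1$ gives $F(\vartheta,\zzz)\asymp r^{\alpha|\zzz|}$ with uniform constants. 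This is exactly the paper's Theorem~\ref{th4.6}. (Incidentally, it is not clear that the OSC alone forces finitely many cone types, so that route may not even be available in general.)

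The hitting-distribution argument, however, has a genuine gap. The map $\zzz\mapsto i\zzz$ is indeed a graph isomorphism from $(X,\mathfrak E)$ onto the \emph{induced} subgraph on $T_i$, but in the ambient augmented tree there are horizontal edges joining $T_i$ to the other $T_j$ at \emph{every} level (whenever $S_{i\uuu}(K)$ comes within $\gamma r^{|\uuu|+1}$ of some $S_{j\vvv}(K)$). Hence the $\lambda$-NRW on $(X,\mathfrak E)$ does not restrict to a rescaled $\lambda$-NRW on $T_i$, and the ``interface'' across which you propose to decompose is infinite; there is no evident bookkeeping of excursions through $\bigcup_{j\ne i}T_j$ that yields the exact identity $\nu_\vartheta(S_i(A))=r^{\alpha}\nu_\vartheta(A)$. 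The paper takes a different route: using reversibility on the walk absorbed at level $m$ it obtains the \emph{exact} first-hitting law $\mathbb P_\vartheta(Z_{\tau_m}=\xxx)=p_\xxx$ for every $\xxx\in\mathcal J_m$ (Proposition~\ref{th4.3} and Theorem~\ref{th4.6}), and then a Fatou-lemma argument (Lemma~\ref{th5.5}, Theorem~\ref{th5.6}) passes this to the boundary. The exact equality at every finite level---not merely $\asymp$---is what forces $\nu_\vartheta=\mu$ rather than only $\nu_\vartheta\asymp\mu$.
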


\medskip
To derive the hitting distribution, a main part is to show that
$F_n({\vartheta}, \xxx):= {\mathbb P}_{\vartheta}(Z_{\tau_n} = \xxx)$ equals  $r^{\alpha n}$ for $|\xxx | =n$,
where $\tau_n$ is the first hitting time at level $n$ (Theorem \ref{th4.6}). The result then follows from a limiting argument (Theorem \ref{th5.6}). Recall that the Martin kernel is given by
$$
 K(\xxx, \yyy) = \frac {G(\xxx, \yyy) } {G(\vartheta, \yyy) } = \frac {F(\xxx, \yyy) } {F(\vartheta, \yyy) }, \qquad \xxx, \yyy \in X,
$$
where $G(\xxx, \yyy)$ is the Green function, and $F(\xxx, \yyy)$ is the probability of the chain ever visiting $\yyy$ from $\xxx$.
 The estimates of $F(\xxx, \vartheta)$ (Proposition \ref{th4.1}), $F(\vartheta, \xxx)$ (Theorem \ref{th4.6}) and $F(\xxx, \yyy)$ (Theorem \ref{th5.3}) are the core of the proof, they need substantial use of the reversibility of the chain as well as the hyperbolicity of $(X, {\mathfrak E})$.

 \medskip

Following Silverstein \cite{Si}, the Na\"{i}m kernel is defined by
$$
\Theta (\xxx, \yyy) = \frac {K(\xxx, \yyy)}{G(\xxx, \vartheta)} = \frac {K(\xxx, \yyy)}{F(\xxx, \theta) G(\vartheta, \vartheta)}.
$$
It is the discrete analogue of the kernel studied by \cite{Na} in classical potential theory. It is easy to extend  $\Theta (\xxx, \yyy)$ to $\Theta (\xxx, \eta)$ for $\eta \in K$, but the extension to $\Theta (\xi, \eta)$ for $\xi \in K$ is much more involved. In \cite{Na}, the extension involves Cartan's fine topology; in Markov chain theory, Silverstein \cite{Si} proved the identity
$$
\Theta (\xi, \eta) = \lim_{k \to \infty} {\sum}_{\zzz \in X} \ell_k^\xi(\zzz ) \Theta(\zzz, \eta),  \quad \xi,  \eta \in K,
$$
where $\ell_k^\xi(\zzz)$ is the probability for the $\xi$-process to last exit the $k$th-level at $\zzz$ (see \eqref{eq6.1}, \eqref{eq6.3}). Analyzing the above limit, we have (Theorem \ref{th6.4})

\begin{theorem} \label{th1.4}  For the $\lambda$-NRW on $(X, {\mathfrak E})$ with the energy form as in \eqref{eq1.3}, the Na\"{i}m kernel is asymptotically given by
$$
\Theta (\xi, \eta) \asymp (\lambda r^\alpha)^{-(\xi | \eta)}, \quad \xi, \eta \in K.
$$
Consequently by Theorem \ref{th1.1} and \eqref{eq1.4}, the induced energy form satisfies
$$
{\mathcal E}_K[\varphi] \asymp \int_K \int_K |\varphi(\xi) - \varphi (\eta)|^2 \,|\xi - \eta|^{-(\alpha + \beta)} d\nu (\xi) d\nu (\eta)
$$
with $\beta = \log \lambda / \log r$.
\end{theorem}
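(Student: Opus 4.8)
The plan is to establish the kernel estimate first with only the second argument on the boundary and then to push the first argument to the boundary via Silverstein's last‑exit identity, and finally to translate the Gromov‑product bound into the metric bound and substitute it into \eqref{eq1.4}. First I would treat $\Theta(\xxx,\eta)$ for $\xxx\in X$, $\eta\in K$. Starting from the definition $\Theta(\xxx,\eta)=K(\xxx,\eta)/(F(\xxx,\vartheta)\,G(\vartheta,\vartheta))$, in which $G(\vartheta,\vartheta)$ is a fixed constant, I would insert the Martin‑kernel estimate of Theorem~\ref{th1.3}, namely $K(\xxx,\eta)\asymp\lambda^{|\xxx|-(\xxx|\eta)}r^{-\alpha(\xxx|\eta)}$, together with the escape‑probability estimate $F(\xxx,\vartheta)\asymp\lambda^{|\xxx|}$ coming from Proposition~\ref{th4.1} (the level of $\xxx$ being crossed downward with probability $\asymp 1$ and upward with probability $\asymp\lambda$). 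The powers $\lambda^{|\xxx|}$ cancel and leave
$$
\Theta(\xxx,\eta)\asymp(\lambda r^\alpha)^{-(\xxx|\eta)},\qquad \xxx\in X,\ \eta\in K,
$$
with constants independent of $\xxx$ and $\eta$.

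Next, fixing $\xi,\eta\in K$, I would use Silverstein's identity (see \eqref{eq6.1}, \eqref{eq6.3}), $\Theta(\xi,\eta)=\lim_{k\to\infty}\sum_{\zzz\in X}\ell_k^\xi(\zzz)\,\Theta(\zzz,\eta)$, where $\ell_k^\xi$ is the last‑exit law of the $\xi$‑process from $X_k$, a probability measure carried by the $k$th level. Since the $\xi$‑process converges to $\xi\in\partial_HX$ and, after its last visit to level $k$, never returns to $X_k$, that last‑exit vertex $\zzz$ must lie within bounded Gromov distance of the ray $\vartheta\to\xi$; hence there is a constant $c_0$, depending only on the hyperbolicity constant of $(X,{\mathfrak E})$ and on the walk, with $k\ge(\zzz|\xi)\ge k-c_0$ for every $\zzz$ in the support of $\ell_k^\xi$. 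Then, as soon as $k-c_0>(\xi|\eta)+\delta$, the $\delta$‑inequalities $(\zzz|\eta)\ge\min\{(\zzz|\xi),(\xi|\eta)\}-\delta$ and $(\xi|\eta)\ge\min\{(\zzz|\xi),(\zzz|\eta)\}-\delta$ force $|(\zzz|\eta)-(\xi|\eta)|\le\delta$ for all such $\zzz$; since $\lambda,r^\alpha\in(0,1)$ this makes the estimate of the first paragraph read $\Theta(\zzz,\eta)\asymp(\lambda r^\alpha)^{-(\xi|\eta)}$ uniformly over these $\zzz$, and summing against $\ell_k^\xi$ (total mass $1$) gives $\sum_{\zzz}\ell_k^\xi(\zzz)\Theta(\zzz,\eta)\asymp(\lambda r^\alpha)^{-(\xi|\eta)}$ for all large $k$, with constants not depending on $k$, $\xi$ or $\eta$. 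Letting $k\to\infty$ then yields $\Theta(\xi,\eta)\asymp(\lambda r^\alpha)^{-(\xi|\eta)}$. I expect this to be the main obstacle: one must verify that $\ell_k^\xi$ genuinely concentrates near the ray to $\xi$ and keep every constant uniform in $k,\xi,\eta$, so that the comparison survives the passage to the limit — precisely where the hyperbolic geometry of $(X,{\mathfrak E})$ and the description of the $\xi$‑process from Section~6 are used.

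Finally I would convert and assemble. By Theorem~\ref{th1.1}, identifying $K$ with $\partial_HX$, one has $\varrho_a(\xi,\eta)=e^{-a(\xi|\eta)}\asymp|\xi-\eta|^{-a/\log r}$, which on taking logarithms is equivalent to $r^{(\xi|\eta)}\asymp|\xi-\eta|$. Setting $\beta=\log\lambda/\log r>0$, so that $\lambda r^\alpha=r^{\alpha+\beta}$, the estimate just obtained becomes $\Theta(\xi,\eta)\asymp\bigl(r^{(\xi|\eta)}\bigr)^{-(\alpha+\beta)}\asymp|\xi-\eta|^{-(\alpha+\beta)}$. By Theorem~\ref{th1.2} we may identify ${\mathcal M}$ with $K$, and by Theorem~\ref{th1.3} the hitting measure $\nu$ is the normalized Hausdorff measure $\mathcal H^\alpha$ on $K$; substituting the kernel bound into Silverstein's representation \eqref{eq1.4} gives
$$
{\mathcal E}_K[\varphi]={\mathcal E}_{\mathcal M}[\varphi]\asymp\int_K\int_K|\varphi(\xi)-\varphi(\eta)|^2\,|\xi-\eta|^{-(\alpha+\beta)}\,d\nu(\xi)\,d\nu(\eta),
$$
which is the assertion.
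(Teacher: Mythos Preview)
Your first paragraph (the estimate $\Theta(\xxx,\eta)\asymp(\lambda r^\alpha)^{-(\xxx|\eta)}$ for $\xxx\in X$) and your final paragraph (the metric translation via Theorem~\ref{th1.1}) are correct and match the paper. The problem lies in your second paragraph, and it is not merely the ``main obstacle'' you flag but an outright false claim.

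You assert that for every $\zzz$ in the support of $\ell_k^\xi$ one has $(\zzz|\xi)\ge k-c_0$ for some uniform $c_0$. This is not true on an augmented tree with horizontal edges. Take, for example, the Sierpi\'nski graph: for any $\zzz\in\mathcal J_k$, the $\xi$-process can reach $\zzz$ along vertical edges, step to a child $\zzz i\in\mathcal J_{k+1}$, and then use the horizontal edges at levels $k+1,k+2,\dots$ to migrate toward the geodesic ray to $\xi$ without ever revisiting level $k$. Each step has positive probability under $P^\xi$ (the $h$-transform of an irreducible chain remains irreducible), so $\ell_k^\xi(\zzz)>0$ for \emph{every} $\zzz\in\mathcal J_k$. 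Thus the support of $\ell_k^\xi$ is all of $\mathcal J_k$, and your hyperbolicity argument, which requires $(\zzz|\xi)>(\xi|\eta)+\delta$, fails for vertices far from the ray. Since $\#\mathcal J_k\asymp r^{-\alpha k}$ grows, even a weak uniform lower bound on $\Theta(\zzz,\eta)$ summed over the whole level would diverge; you genuinely need quantitative control of the tail of $\ell_k^\xi$, which you do not provide.

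The paper's proof (Theorem~\ref{th6.4}) handles the two bounds by different mechanisms. For the lower bound it uses an idea close to yours, but correctly: it only needs that a \emph{positive fraction} of the $\ell_{m_1}^\xi$-mass sits in a fixed neighborhood $T_\xi$ of the ray for some large $m_1$ (which follows from $\mathbb P_\vartheta^\xi$-a.s.\ convergence $Z_n\to\xi$), not that the full support lies there. For the upper bound it abandons concentration of $\ell_k^\xi$ altogether and instead bounds $\ell_k^\xi(\zzz)\le G^\xi(\vartheta,\zzz)=G(\vartheta,\zzz)K(\zzz,\xi)$, reducing the problem to estimating $\sum_{\zzz\in\mathcal J_k}(\lambda r^\alpha)^{|\zzz|-(\zzz|\xxx_k)-(\zzz|\yyy_k)}$ uniformly in $k$. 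This sum is then controlled by a recursive argument: descendants of vertices outside the $M$-neighborhoods $\mathcal F_k(\xxx_k)\cup\mathcal F_k(\yyy_k)$ contribute a geometrically decaying amount (factor $\lambda^{n_0}r^{-\alpha}<1$), while the bounded-degree property caps the contribution from $\mathcal F_k(\xxx_k)\cup\mathcal F_k(\yyy_k)$. This counting step, not a concentration property of $\ell_k^\xi$, is what makes the upper bound work.
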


The domain ${\mathcal D}_K$ of ${\mathcal E}_K$ consists of all functions in $L^2(K, \nu)$ that possess finite $\en_K$-energies, and equals the Besov space $\Lambda^{\alpha, \beta/2}_{2, 2}$ on $K$ \cite{St}. Let $\beta^*$ be the critical exponent such that $\Lambda^{\alpha, \beta/2}_{2,2} \cap C(K)$ is dense in $C(K)$ for all $0 <\beta < \beta^*$,  then the pair $({\mathcal E}_K,{\mathcal D}_K)$ with $\beta \in (0,\beta^*)$ is a non-local Dirichlet form. There are certain values of $\beta$ for which the Dirichlet forms are known to be regular \cite{St, CK, GHL1, GHL2}, and there are some standard examples where the values of $\beta^*$ can be determined explicitly \cite{Ku,J}. These issues will be discussed in Section~\ref{sec:7}, and more detail in the forthcoming paper \cite{KL}.
For a more recent development, Grigor'yan and Yang \cite {GY} reconstructed the corresponding reflective process of the $\lambda$-NRW on the Sierpi\'{n}ski graph and its boundary analogous to the Brownian motion on the closed unit disk, and use this device and Theorem \ref {th1.4} to study the critical exponent $\beta^*$.

\medskip

In the text the theorems are presented in greater generality. We will define a {\it pre-augmented tree} on an $N$-ary tree that has the hyperbolic property (Definition \ref{de3.1}). Many results for the random walks will hold on such graphs. For an IFS $\{S_i\}_{i=1}^N$ that is not homogeneous, we need to re-adjust the levels $\Sigma^n$ as in \eqref{eq3.1''} and restrict to the natural weights $p_i = r_i^\alpha$, where $r_i$ is the contraction ratio of $S_i$, and $\alpha$ satisfies $\sum_{i=1}^N r_i^\alpha =1$. Furthermore, we can set up a more general $\lambda$-{\it quasi-natural random walk} ($\lambda$-QNRW) (Definition \ref{th4.4}), which corresponds to weights $\{p_i\}_{i=1}^N$, that gives a doubling self-similar measure, and Theorems \ref {th1.2} and \ref{th1.3} are still valid (the hitting distribution becomes the self-similar measure defined by the weights $\{p_i\}_{i=1}^N$).

\bigskip

For the organization of the paper, we first recall some basic facts about hyperbolic graphs and the potential theory of Markov chains in Section~\ref{sec:2}. In Section~\ref{sec:3} we introduce the augmented tree $(X, {\mathfrak E})$ on the symbolic space and study its hyperbolic structure. In Section~\ref{sec:4}, we study the classes of $\lambda$-QNRW and $\lambda$-NRW and estimate the associated probabilities. Theorems \ref{th1.2} and \ref{th1.3} are proved in Section~\ref{sec:5}. After a brief review of the Na\"{i}m kernel, we prove Theorem \ref{th1.4} in Section~\ref{sec:6}. Finally, the induced Dirichlet forms will be discussed in Section~\ref{sec:7}.

\bigskip

\section{Preliminaries}
\label{sec:2}

\noindent This section presents a quick summary of hyperbolic graphs and the potential theory of Markov chains we need throughout the paper. For more details, the reader can refer to the excellent monograph \cite{Wo1} by Woess.

\medskip

Let $X$ be a countable set. An {\it (undirected simple) graph} is a pair $(X, {\mathfrak E})$  where ${\mathfrak E}$ is a symmetric subset of $X \times X \setminus \{(x,x):x \in X\}$.  We call $x \in X$ a vertex, and $(x,y) \in {\mathfrak E}$  an edge (denoted by $x \sim y$).  For $x \in X$, the {\it degree} of $x$,
denoted by $\deg (x)$, is the number of edges joining $x$. We say that $(X,{\mathfrak E})$ is
of {\it bounded degree} if
$\sup_{x \in X}\deg (x)<\infty$. For $x,y \in X$, a {\it path} from $x$ to $y$ with
{\it length} $n$ is a finite sequence $[x_0 , x_1, \ldots, x_n]$ such that
$x_0= x, x_n=y$, and $(x_i, x_{i+1}) \in {\mathfrak E}$ for all $0\leq i \leq n-1$.  We always assume that  $(X,{\mathfrak E})$ is {\it connected}, i.e.,  any two vertices can be connected by a path. A graph is called a {\it tree} if any two points can be connected by a unique non-self-intersecting path.

\medskip

We use $\pi (x,y)$ to denote a path connecting two vertices $x$ and $y$ with minimum length, and call it a {\it geodesic path}. The length of $\pi (x,y)$ defines a metric $d(x,y)$ on $X$. Let $\vartheta$ be any fixed vertex in $X$ regarded as the {\it root}. We define the {\it Gromov product} by
\begin{equation} \label{eq2.0}
(x|y) = \frac 12 (|x| + |y| - d(x,y)), \qquad x, y \in X,
\end{equation}
where $|x| = d(\vartheta, x)$.

\medskip

\begin{definition} \label{th2.1} We say that $(X, {\mathfrak E})$ is {\it hyperbolic} if there exists $\delta \geq 0$ such that
\begin{equation}  \label{eq2.1}
(x|y) \geq \min \{(x|z), (z|y)\} -\delta,  \qquad \forall \ x, y, z \in X.
\end{equation}
\end{definition}

\medskip

Clearly, every tree is hyperbolic with $\delta =0$.
For a hyperbolic graph $(X,{\mathfrak E})$, we choose $a > 0$ satisfying $e^{\delta a}< \sqrt{2}$, and define
\begin{equation} \label{eq2.1'}
\varrho_a(x, y) = e^{-a(x|y)}, \ x \neq y,\quad \hbox {and} \quad =0 \ \  \hbox {if}\  x=y.
\end{equation}
This $\varrho_a(\cdot, \cdot)$ on $X \times X$ is equivalent to a metric, and is called {\it Gromov metric} for convenience.

\medskip
\begin{definition} \label{th2.3}
Let $\widehat{X}_H$ denote the completion of  $(X,\varrho_a)$.
The {\it hyperbolic boundary} of $(X,{\mathfrak E})$ is defined as
$\partial_H X = \widehat{X}_H \setminus X$.
\end{definition}

\medskip

The hyperbolic boundary $\partial_H X$ is compact and can be regarded as the collection of all (infinite) geodesic rays starting at $\vartheta$, where two rays
$(x_n)_n$ and $(y_n)_n$ are identified if $\lim_{n \to \infty}(x_n | y_n) = \infty$.

\medskip

Next we consider random walks on a graph $(X,\ed)$. Let $\{Z_n\}_{n=0}^{\infty}$ be a Markov chain on $X$ with transition probability $P$. We write $\mathbb{P}(\cdot \mid Z_0 = x)=\mathbb{P}_x(\cdot)$ for short.
A function $u$ on $X$ is said to be {\it harmonic} (with respect to $P$) if $Pu=u$ where $Pu(x)=\sum_{y\in X} P(x,y)u(y) = \mathbb E_x u(Z_1)$.
The {\it Green function} with respect to $P$ is
$G(x, y) := \sum_{k=0}^{\infty} P^k(x, y)$. We always assume that  the Markov chain is irreducible and transient, i.e., $0 < G(x,y) < \infty$ for any $x,y \in X$.
We define $F(x, y)$ as the probability that the chain starting from $x$ ever visits $y$, i.e.,
\begin{equation} \label{eq2.2}
F(x, y) := \mathbb{P}_x (\exists \ n \geq 0 \ \hbox{such that} \ Z_n = y).
\end{equation}
It is clear that $G(x, y) = F(x, y)G(y, y)$.

\medskip

Fix a reference point $\vartheta \in X$.
We define the {\it Martin kernel} by
\begin{equation*}
K(x,y):=\frac{G(x,y)}{G(\vartheta, y)} = \frac {F(x, y)}{F(\vartheta, y)}, \quad x,y \in X.
\end{equation*}
The following definition is taken from \cite{Wo1}.
\begin{definition} \label{th2.4}
The {\it Martin compactification} of $(X,P)$ is the minimal compactification $\widehat{X}$ of $X$ such that the Martin kernel $K(x, \cdot)$ extends continuously to $\widehat X$ for all $x \in X$. The set $\mathcal{M} = \widehat{X} \setminus X$ is called the {\it Martin boundary} of $(X,P)$.
\end{definition}

It is known that under the Martin topology, $\{Z_n\}_{n=0}^\infty$ converges almost surely to a $\mathcal{M}$-valued random variable $Z_{\infty}$.
For $x \in X$, we define the hitting distribution $\nu_x$ to be the distribution of $Z_{\infty}$ on $\mathcal M$ under $\mathbb P_x$.
The measure $\nu_x$ is absolutely continuous with respect to $\nu_\vartheta$,  and the Radon-Nikodym density is $d \nu_x / d \nu_\vartheta = K(x,\cdot)$.
For $u \in L^1(\mathcal{M},\nu_\vartheta)$, we define its {\it Poisson integral} by
$$
Hu(x) = \mathbb{E}_x u(Z_\infty) = \ds\int_{\mathcal{M}} K(x,\xi)u(\xi)d \nu_\vartheta (\xi), \quad x \in X.
$$
Clearly $Hu$ is harmonic on $X$ since $K(\cdot, \xi)$ is harmonic for all $\xi \in \mathcal{M}$.
A nonnegative harmonic function $h$ on $X$ is called {\it minimal} if $h(\vartheta) = 1$ and if $h' \leq h$ such that $h'$ is nonnegative harmonic functions on $X$, then $h'/h$ is constant. We define the {\it minimal Martin boundary} as $\mathcal{M}_{\min} = \{\xi \in \mathcal{M}: K(\cdot,\xi) \hbox{ is minimal}\}$. Then $Z_\infty \in {\mathcal M}_{\rm min}$ with probability 1, and $\nu_\vartheta$ is supported on ${\mathcal M}_{\rm min}$. We will make use of $\mathcal M_{\min}$ in Section 6 (Lemma \ref{th6.1}).

\bigskip

We say that $P$ has {\it bounded range} if $\sup\{d(x,y): P(x,y)>0 \hbox{ for some } x,y \in X\} < \infty$, and is {\it uniformly irreducible} if there exist $\epsilon > 0$ and $k_0$ such that for any $x \sim y$, there exist $k \leq k_0$ with  $P^k(x,y) \geq \epsilon$. The {\it spectral radius} of $P$ is
 $$
r(P) =  {\limsup}_{n\to \infty} (P^n(x,y))^{1/n} \in (0,1], \quad x,y \in X.
$$
(Note that the limsup is independent of $x$ and $y$ \cite{Wo1}.)
The following important result is due to Ancona \cite{An1,An2}, and the specific version we use is taken from \cite[Theorem 27.1]{Wo1}.

\medskip

\begin{theorem} \label{th2.5} \hspace{-2mm} {\rm (Ancona)} Suppose $(X,\ed)$ is a hyperbolic graph, $P$ is uniformly irreducible with bounded range, and $r(P) < 1$. Then for any $\delta \geq 0$, there is a constant $C_\delta \geq 1$ such that for any $x$, $y \in X$ and $u$ within distance $\delta$ from a geodesic segment between $x$ and $y$,
\begin{equation} \label{eq2.4}
F(x, u)F(u, y) \leq
F(x, y) \leq C_\delta  F(x, u)F(u, y).
\end{equation}

Moreover, the Martin boundary equals the minimal Martin boundary, and is homeomorphic to the hyperbolic boundary.

\end{theorem}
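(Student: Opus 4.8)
\quad The lower bound is the easy half: applying the strong Markov property at the first passage time $\tau_u$ of $u$ gives $F(x,y)=\mathbb P_x(\tau_y<\infty)\ge\mathbb P_x(\tau_u<\infty)\,\mathbb P_u(\tau_y<\infty)=F(x,u)F(u,y)$, regardless of where $u$ lies. All the work is in the upper (Ancona) inequality, and I would follow Ancona's original scheme \cite{An1,An2} in the streamlined form of \cite{Wo1}. Step~1 is to record the two analytic inputs. From $r(P)<1$ together with uniform irreducibility and bounded range one extracts constants $C_1\ge1$, $\rho\in(0,1)$ with $G(x,y)\le C_1\rho^{d(x,y)}$, hence $F(x,y)\le C_1\rho^{d(x,y)}$; the same hypotheses give a uniform Harnack (ellipticity) estimate, namely a constant $C_2\ge1$ with $C_2^{-1}\le F(x,z)/F(x',z)\le C_2$ and likewise for $F(z,x)/F(z,x')$ whenever $x\sim x'$. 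Iterating, $F(\cdot,z)$ and $F(z,\cdot)$ change by at most a factor $C_2^{\,t}$ across graph distance $t$. (I use here that the graph has bounded degree $\Delta$, which holds in the applications of interest, e.g. for augmented trees under the OSC.)

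Step~2 is to extract, from $\delta$-hyperbolicity, near-$u$ separators along a geodesic $\pi=[x,y]$. Replacing $u$ by a nearest point of $\pi$ costs only a bounded Harnack factor, so assume $u\in\pi$. The basic geometric fact is that any path from $x$ to $y$ of length $\ell$ contains $[x,y]$ in its $(c\delta\log\ell+O(1))$-neighbourhood; consequently a walk reaching $y$ from $x$ that travels far from $\pi$ near $u$ must be long, and is therefore exponentially rare by the decay of Step~1. Concretely, for suitable integers $k$ one forms the sets $\Pi_k=\{v:\ d(v,\pi)\le\delta,\ d(v,u)=k\}$ (of size bounded in terms of $\delta$ and $\Delta$) and shows that, up to an error controlled by $\rho$ to a large power, every walk contributing to $F(x,y)$ must pass through $\Pi_k$; this is the passage from hyperbolicity to a genuine barrier.

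Step~3, the crux, is the iteration. One decomposes $F(x,y)$ along the first near-$u$ separator met from the $x$-side, $F(x,y)\le\sum_{v\in\Pi_h}\mathbb P_x(Z_{\tau_{\Pi_h}}=v)\,F(v,y)+(\text{long-detour error})$, bounds $\mathbb P_x(Z_{\tau_{\Pi_h}}=v)\le F(x,v)$ and then uses Harnack (bounded move $v\to$ geodesic point) and the geometry of $\Pi_h$; choosing the spacing $h$ so large that $C_1\,C_2^{\,O(h+\delta)}\,|\Pi_\bullet|\,\rho^{\,h}\le\tfrac12$ makes each successive passage contract the available ``budget'' by a factor $\le\tfrac12$ relative to the product $F(x,u)F(u,y)$. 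Telescoping over the $O(d(x,u)/h)$ separators on the $x$-side and the $O(d(u,y)/h)$ on the $y$-side, summing the resulting geometric series, and absorbing the long-detour errors (by induction on $d(x,y)$, or equivalently a fixed-point resummation), yields $F(x,y)\le C_\delta F(x,u)F(u,y)$ with $C_\delta$ depending only on $\delta,\rho,C_1,C_2,\Delta$. This quantitative conversion of ``exponential decay $+$ Harnack'' into the exact multiplicative form, uniformly in $x,y,u$, is the technical heart of the theorem and the main obstacle; the bookkeeping I would import essentially verbatim from \cite{Wo1}.

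Step~4 deduces the boundary identification. Fix the root $\vartheta$, and for $\xi\in\partial_HX$ represented by a geodesic ray $(y_n)$ apply the two-sided estimate to the configuration $\vartheta,x,y_n$ with $u_n$ a point where $[x,y_n]$ and $[\vartheta,y_n]$ fellow-travel: then $K(x,y_n)=F(x,y_n)/F(\vartheta,y_n)\asymp F(x,u_n)/F(\vartheta,u_n)=K(x,u_n)$, and $u_n$ stabilizes along the ray, so $K(x,\xi):=\lim_nK(x,y_n)$ exists, depends continuously on $\xi$ (uniformly in $x$ on bounded sets), and separates distinct rays because the estimate makes the kernel ``multiplicative along geodesics''. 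This produces a continuous bijection $\partial_HX\to\mathcal M$, hence a homeomorphism by compactness of $\partial_HX$ and Hausdorffness of $\widehat X$. Finally, transience forces $Z_n$ to converge in $\widehat X_H$ almost surely, so $\nu_\vartheta$ is carried by the image of $\partial_HX$, and the multiplicativity furnished by the Ancona inequality gives the extremality (minimality) of every $K(\cdot,\xi)$ via the standard criterion in \cite{Wo1}; thus $\mathcal M=\mathcal M_{\min}$ and both are homeomorphic to $\partial_HX$.
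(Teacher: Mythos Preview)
The paper does not give a proof of Theorem~\ref{th2.5}; it is quoted from the literature (Ancona \cite{An1,An2}, in the form of \cite[Theorem~27.1]{Wo1}) and used as a black box. So there is no ``paper's own proof'' to compare against. Your proposal is a high-level sketch of precisely that cited argument, and you say as much when you write that the bookkeeping would be imported verbatim from \cite{Wo1}. In that sense the approach is the right one and matches what the paper invokes.

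Two remarks on the sketch itself. First, you insert bounded degree as an additional hypothesis (``I use here that the graph has bounded degree $\Delta$''), but the theorem as stated in the paper and in \cite{Wo1} does not assume this; the control on the separator sets and the Harnack-type comparisons are obtained from uniform irreducibility, bounded range and $r(P)<1$ alone. If you actually need bounded degree in your write-up you are proving a weaker statement than the one quoted (sufficient for the paper's applications, but not the theorem as stated). Second, your Step~3 description of the iteration is schematic to the point of hiding the genuine difficulty: Ancona's argument is not a straightforward geometric-series telescoping over equally spaced separators, but an inductive scheme on scales in which one proves a uniform multiplicative inequality on all shorter geodesic segments and feeds it back in; the ``long-detour error'' and the contraction factor have to be balanced simultaneously, and this is exactly where the detailed estimates in \cite{An1,An2,Wo1} are needed. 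As a pointer to the literature your outline is fine, but as a self-contained proof it would need substantially more than what is written.
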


\medskip

We focus on the class of (transient) {\it reversible random walks} on $(X,\mathfrak E)$ where the transition function $P$ satisfies
\begin{equation} \label{eq2.6}
P(x,y) =
\ \dfrac{c(x,y)}{m(x)}, \quad x \sim y,
\end{equation}
and is $0$ elsewhere. Here $c(x,y)=c(y,x)>0$ for each pair $(x,y) \in \mathfrak E$. We call $c(x,y)$ the {\it conductance} of the edge $(x,y)$, and $m(x) = \sum_{y \in X, y \sim x} c(x,y) > 0$ the {\it total conductance} at $x$.
To apply Theorems \ref{th2.5}, we need to check the condition $r(P)<1$. Here we cite a geometric characterization of this condition in \cite[Theorem 10.3]{Wo1} which will be used in Section~\ref{sec:5}. For $A \subset X$, let
$$
\partial A = \{(x,y) \in \mathfrak E: x \in A, \ y \notin A\}
$$
be the {\it boundary} of $A$ and let $c(\partial A) = \sum_{(x,y) \in \partial A}c(x,y)$ (analogous to the surface area of $A$). We also view  $m(A)=\sum_{x \in A}m(x)$ as the ``volume" of $A$.

\medskip

\begin{proposition} \label{th2.7}
Suppose $P$ is a reversible random walk on $(X,\mathfrak E)$. Then $r(P)<1$ if and only if $(X,P)$ satisfies the strong isoperimetric inequality: there exists $\eta >0$ such that
$$
m(A) \leq \eta c(\partial A)
$$
for all finite subsets $A \subset X$.
\end{proposition}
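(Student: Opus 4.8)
The plan is to realize $r(P)$ as the norm of the transition operator on the weighted space $\ell^2(X,m)$, with inner product $\langle f,g\rangle_m=\sum_{x\in X}f(x)g(x)m(x)$, and then prove a Cheeger-type inequality relating that norm to the isoperimetric constant. First I would note that for a reversible $P$ the operator $Pf(x)=\sum_y P(x,y)f(y)$ is self-adjoint on $\ell^2(X,m)$, since $\langle Pf,g\rangle_m=\sum_{(x,y)\in\mathfrak E}c(x,y)f(y)g(x)$ is visibly symmetric in $f$ and $g$; together with irreducibility this gives the classical identity $r(P)=\limsup_n P^n(x,x)^{1/n}=\|P\|_{\ell^2(X,m)}$ (see \cite{Wo1}). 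Next I would record the two quadratic-form identities $\langle(I-P)f,f\rangle_m=\tfrac12\sum_{(x,y)\in\mathfrak E}c(x,y)(f(x)-f(y))^2$ and $\langle(I+P)f,f\rangle_m=\tfrac12\sum_{(x,y)\in\mathfrak E}c(x,y)(f(x)+f(y))^2$, both of which are nonnegative, so that $\mathrm{spec}(P)\subseteq[-1,1]$ and $\|P\|<1$ if and only if there is $\epsilon>0$ with both forms bounded below by $\epsilon\|f\|_m^2$ for every finitely supported $f$. Adding the two identities gives the auxiliary relation $\sum_{(x,y)\in\mathfrak E}c(x,y)(f(x)-f(y))^2+\sum_{(x,y)\in\mathfrak E}c(x,y)(f(x)+f(y))^2=4\|f\|_m^2$.

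For the implication ``strong isoperimetric inequality $\Rightarrow r(P)<1$'', let $\iota=\inf_A c(\partial A)/m(A)>0$, fix a finitely supported $f\neq0$, and apply the layer-cake formula to the nonnegative function $f^2$ with the finite level sets $A_t=\{x:f(x)^2>t\}$. This yields $\sum_{(x,y)\in\mathfrak E}c(x,y)|f(x)^2-f(y)^2|=2\int_0^\infty c(\partial A_t)\,dt\geq 2\iota\int_0^\infty m(A_t)\,dt=2\iota\|f\|_m^2$. Writing $|f(x)^2-f(y)^2|=|f(x)-f(y)|\cdot|f(x)+f(y)|$ and applying Cauchy--Schwarz bounds the left side above by $Q^{1/2}\widetilde Q^{1/2}$, where $Q=\sum_{(x,y)\in\mathfrak E}c(x,y)(f(x)-f(y))^2$ and $\widetilde Q=\sum_{(x,y)\in\mathfrak E}c(x,y)(f(x)+f(y))^2$. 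Hence $Q\widetilde Q\geq 4\iota^2\|f\|_m^4$ while $Q+\widetilde Q=4\|f\|_m^2$, and an elementary one-variable estimate forces $\min(Q,\widetilde Q)\geq 2\bigl(1-\sqrt{1-\iota^2}\,\bigr)\|f\|_m^2$. Therefore both $\langle(I-P)f,f\rangle_m$ and $\langle(I+P)f,f\rangle_m$ are at least $\bigl(1-\sqrt{1-\iota^2}\,\bigr)\|f\|_m^2$, so $\|P\|\leq\sqrt{1-\iota^2}<1$, i.e. $r(P)<1$.

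For the converse I would argue by contraposition. If the strong isoperimetric inequality fails, choose finite sets $A_n$ with $c(\partial A_n)\leq\tfrac1n m(A_n)$ and test with $f_n=\mathbf 1_{A_n}$: then $\langle(I-P)f_n,f_n\rangle_m=c(\partial A_n)\leq\tfrac1n m(A_n)=\tfrac1n\|f_n\|_m^2$, so $\sup_f\langle Pf,f\rangle_m/\|f\|_m^2=1$, and consequently $r(P)=\|P\|=1$.

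I expect the main obstacle to be the very first step, the identity $r(P)=\|P\|_{\ell^2(X,m)}$: it is the only point where genuine operator theory (self-adjointness and the spectral calculus) intervenes, and it is precisely here that irreducibility is essential, so that the local return probabilities $P^n(x,x)$ detect the full $\ell^2$-spectrum, including its negative part. This is what allows the single Cheeger estimate above to control both ends of the spectrum simultaneously and to deliver $r(P)<1$ rather than merely $\lambda_{\max}(P)<1$. Everything that follows is elementary, using only the layer-cake formula, Cauchy--Schwarz, and a scalar inequality.
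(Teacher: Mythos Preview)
Your argument is correct and is essentially the standard Cheeger-inequality proof; however, note that the paper does \emph{not} supply its own proof of this proposition. It is stated there as a citation of \cite[Theorem~10.3]{Wo1}, so there is no ``paper's proof'' to compare against beyond the reference to Woess. What you have written is in fact a clean reconstruction of the argument behind that cited theorem: the identification $r(P)=\|P\|_{\ell^2(X,m)}$ via self-adjointness, the co-area/layer-cake formula applied to $f^2$, Cauchy--Schwarz, and the algebraic identity $Q+\widetilde Q=4\|f\|_m^2$ giving the quantitative bound $\|P\|\le\sqrt{1-\iota^2}$. The converse via indicator test functions is likewise the standard one. One small remark: your closing comment that irreducibility is needed so that local return probabilities ``detect the full $\ell^2$-spectrum, including its negative part'' slightly overstates the role of irreducibility---it is used to ensure the limsup defining $r(P)$ is independent of the base point and equals the operator spectral radius, while control of the negative spectrum comes from your two-sided variational argument; but this does not affect the correctness of the proof.
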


\bigskip

\section{Self-similar sets and augmented trees}
\label{sec:3}

\noindent Let $\{S_i\}_{i=1}^N$, $N \geq 2$, be an {\it iterated function system (IFS)} of contractive similitudes on $\mathbb{R}^d$ where $S_i$ has contraction ratios $r_i \in (0,1)$. The {\it self-similar set} $K$ of the IFS is the unique non-empty compact set in $\mathbb{R}^d$ satisfying $$
K={\bigcup}_{i=1}^{N}S_i(K).
$$
For a set of positive probability weights $\{p_i\}_{i=1}^N$, there is a unique  self-similar measure $\mu$ satisfying the identity
\begin{equation} \label {eq3.1}
\mu (\cdot) = {\sum}_{i=1}^Np_i \mu(S_i^{-1}(\cdot)).
\end{equation}
The IFS is said to satisfy the {\it open set condition} (OSC) if there exists a nonempty bounded open set $U$ such that $S_i(U) \subset U$, and $S_i(U) \cap S_j(U) = \emptyset$ for $i\not = j$. The OSC is the most basic condition imposed on the IFS. It is known that under this condition, the Hausdorff dimension of $K$, denoted by $\dim_H(K)$, equals $\alpha$ where $\sum_{i=1}^N r_i^\alpha=1$. Furthermore, if we take  $p_i= r_i^\alpha, i=1, \cdots, N$,  then the self-similar measure is the normalized $\alpha$-Hausdorff measure on $K$. We call such weights $\{p_i\}_{i=1}^N$ the  {\it natural weight} of the IFS.

\medskip

The IFS gives rise to a symbolic space. Let $\Sigma^*$ and $\Sigma^{\infty}$ be respectively the sets of finite indices (words) and infinite indices as in Section~\ref{sec:1}. For $\xxx=i_1 \cdots i_n \in \Sigma^{\ast}$, we let $S_{\xxx}$ be the composition
$S_{\xxx}=S_{i_1} \circ \cdots \circ S_{i_n}$, and the contraction ratio of $S_\xxx$ is denoted by $r_\xxx = r_{i_1}\cdots r_{i_n}$. There exists a natural surjection $\kappa: \Sigma^{\infty} \rightarrow K$
defined by
\begin{equation*}
\{\kappa(\omega)\} = {\bigcap}_{m \geq 0} S_{i_1 i_2 \cdots i_m}(K), \qquad \omega = i_1 i_2 \cdots \in \Sigma^{\infty}.
\end{equation*}
Hence each $\xi \in K$ admits a symbolic representation (coding) $\omega \in \Sigma^\infty$. Under the OSC, the representation is unique, except a $\mu$-null set, for any self-similar measure $\mu$.

\medskip

The finite word space $\Sigma^\ast$ has a natural tree structure where $\xxx \sim \yyy$ if $\yyy = \xxx i$ or $\xxx = \yyy i$ for some $i\in \Sigma$. Here the empty word $\vartheta$ is the root. The canonical metric on $\Sigma^\ast$ is $\rho_r (\xxx, \yyy)= r^{\xxx\wedge \yyy} $ ($0<r<1$), where $\xxx\wedge\yyy = \min\{k: i_{k+1} \not = j_{k+1}\}$. Note that $\xxx \wedge \yyy$ coincides with the Gromov product, and $\rho_r$ is a visual metric as in \eqref {eq2.1'}; the (hyperbolic) boundary  is $\Sigma^\infty$, and is a Cantor-type set. The symbolic space is a convenient tool to study the self-similar set $K$, but obviously it misses many properties of $K$. Following Kaimanovich's idea \cite{Ka}, we will define an ``augmented tree" by adding more edges to $\Sigma^*$.

\medskip

First, to deal with an IFS with different contraction ratios, we modify the symbolic space by grouping  together  words that have approximately the same contraction ratios. Let $r = \min \{r_i: 1 \leq i \leq N\}$. Define, for $n \geq 1$,
\begin{equation} \label {eq3.1''}
\mathcal{J}_n = \{\xxx = i_1\cdots i_k \in \Sigma^\ast: r_\xxx \leq r^n < r_{i_1 \cdots i_{k-1}}\},
\end{equation}
and $\mathcal{J}_0 = \{\vartheta\}$ by convention. Clearly, $\mathcal{J}_n \cap \mathcal{J}_m = \emptyset$ for $n \neq m$. We  define the {\it modified symbolic space} as $X = \bigcup_{k=0}^\infty\mathcal{J}_k$. We write $|\xxx| = n$ if $\xxx \in \mathcal{J}_n$, and use $\xxx^-$ to denote the {\it parent} of $\xxx \in X$, i.e., the unique word in $X$ such that $|\xxx^-| = |\xxx|-1$. We also define $\xxx^{-k}, k \geq 2$ for the unique word in $X$ such that $|\xxx^{-k}|=|\xxx|-k$ and $\xxx = \xxx^{-k}\zzz$ for some $\zzz \in \Sigma^\ast$.  In particular, if the IFS $\{S_i\}_{i=1}^N$ has equal contraction ratio ({\it homogeneous} IFS), then $\mathcal{J}_n = \Sigma^n$ and $\xxx^{-k}$ is just $\xxx$ with the last $k$ alphabets deleted. The following basic lemma is known.

 \medskip

\begin{lemma} \label{th3.0}  Suppose the OSC holds, and let $\alpha$ be the Hausdorff dimension of $K$. Then
\begin{enumerate}[(i)]
 \item if $\mu$ is the self-similar measure with natural weights $p_i = r_i^\alpha$, then for $\xxx \in \mathcal{J}_n,\  p_\xxx = \mu(S_\xxx(K)) \asymp r^{\alpha n}$;

\item   $r^{-\alpha n} \leq \# \mathcal{J}_n < r^{-\alpha(n+1)}.$
\end{enumerate}
\end{lemma}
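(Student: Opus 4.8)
The plan is to establish (i) first, and then derive (ii) from it essentially for free. First I would record the two standard consequences of the OSC that the excerpt has already quoted: $\sum_{i=1}^N r_i^\alpha=1$, and the natural‑weight self‑similar measure $\mu$ equals the normalized Hausdorff measure $\mathcal H^\alpha|_K/\mathcal H^\alpha(K)$ with $0<\mathcal H^\alpha(K)<\infty$. From the similarity scaling $\mathcal H^\alpha(S_\xxx(A))=r_\xxx^\alpha\,\mathcal H^\alpha(A)$ together with the (standard under OSC) fact that the overlaps $S_i(K)\cap S_j(K)$, $i\ne j$, are $\mathcal H^\alpha$‑null, one obtains the identity $\mu(S_\xxx(K))=r_\xxx^\alpha$ for every finite word $\xxx$; since the natural weights give $p_\xxx=\prod_j p_{i_j}=\prod_j r_{i_j}^\alpha=r_\xxx^\alpha$, this is precisely $p_\xxx=\mu(S_\xxx(K))$.

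For part (i) I would then just read off the two‑sided bound on $r_\xxx$. By the definition of $\mathcal J_n$ one has $r_\xxx\le r^n$ directly. On the other hand, writing $\xxx=i_1\cdots i_k$ and letting $\xxx'=i_1\cdots i_{k-1}$ be the word obtained by deleting the last letter (with $\xxx'=\vartheta$, $r_{\xxx'}=1$, when $k=1$), the defining inequality gives $r_{\xxx'}>r^n$, whence $r_\xxx=r_{\xxx'}r_{i_k}>r^n\cdot r=r^{n+1}$ because $r_{i_k}\ge r=\min_i r_i$. Raising to the power $\alpha$ yields $r^{\alpha(n+1)}<p_\xxx\le r^{\alpha n}$, which is the claimed $p_\xxx\asymp r^{\alpha n}$ with explicit constants $r^\alpha$ and $1$ (the case $n=0$ being trivial).

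For part (ii) the point is that $\mathcal J_n$ is a \emph{finite cross‑section} of the coding tree. Finiteness: $r_\xxx>r^{n+1}$ forces $(\max_i r_i)^{k}\ge r_\xxx>r^{n+1}$, which bounds the word length $k$, and there are only finitely many words of bounded length. Cross‑section: along any $\omega=i_1i_2\cdots\in\Sigma^\infty$ the ratios $r_{i_1\cdots i_k}$ strictly decrease to $0$, so $\omega$ has exactly one prefix in $\mathcal J_n$; and two distinct elements of $\mathcal J_n$ are incomparable as words, since a proper prefix relation $\xxx\precneqq\yyy$ with both in $\mathcal J_n$ would give $r^n<r_{\yyy'}\le r_\xxx\le r^n$, a contradiction. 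Hence $\{S_\xxx(K)\}_{\xxx\in\mathcal J_n}$ covers $K$, and for $\xxx\ne\yyy$ in $\mathcal J_n$ with longest common prefix $\www$ one has $S_\xxx(K)\cap S_\yyy(K)\subseteq S_\www(S_i(K)\cap S_j(K))$ for some $i\ne j$, which is $\mu$‑null by the OSC. Therefore $\sum_{\xxx\in\mathcal J_n}p_\xxx=\mu(K)=1$ (equivalently, the cylinders $[\xxx]$, $\xxx\in\mathcal J_n$, partition $\Sigma^\infty$ and carry total mass $1$ for the Bernoulli‑type measure $[\xxx]\mapsto r_\xxx^\alpha$, which needs only $\sum_i r_i^\alpha=1$). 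Combining this with the bounds from (i) gives $\#\mathcal J_n\cdot r^{\alpha(n+1)}<\sum_{\xxx\in\mathcal J_n}p_\xxx=1\le\#\mathcal J_n\cdot r^{\alpha n}$, which rearranges to $r^{-\alpha n}\le\#\mathcal J_n<r^{-\alpha(n+1)}$.

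The argument is essentially routine; the only steps requiring any care are (a) invoking the correct OSC consequences — positivity and finiteness of $\mathcal H^\alpha(K)$, and nullity of the pairwise overlaps — to legitimize both the identity $p_\xxx=\mu(S_\xxx(K))$ and the additivity $\sum_{\mathcal J_n}p_\xxx=1$; and (b) the bookkeeping verifying that $\mathcal J_n$, which (unlike $\Sigma^n$) is not just the set of words of a fixed length, is genuinely a finite cross‑section of $\Sigma^\infty$ with pairwise incomparable elements.
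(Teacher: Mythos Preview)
Your proof is correct and follows essentially the same approach as the paper: establish $p_\xxx=r_\xxx^\alpha=\mu(S_\xxx(K))$ via the OSC, read off $r^{n+1}<r_\xxx\le r^n$ from the definition of $\mathcal J_n$, and then sum $\sum_{\xxx\in\mathcal J_n}p_\xxx=1$ to sandwich $\#\mathcal J_n$. The paper states these steps tersely (taking the cross-section property of $\mathcal J_n$ and the additivity as known), while you spell out the verification that $\mathcal J_n$ is a finite antichain partitioning $\Sigma^\infty$; this extra care is fine but not a different argument.
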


\begin{proof} Note that $p_\xxx= r_\xxx^\alpha$, and by definition $r_\xxx \in (r^{n+1}, r^n]$ for $\xxx \in \mathcal{J}_n$. Also the OSC implies that $p_\xxx = \mu(S_\xxx(K))$, and hence (i) follows. Observe that
$$
1=\mu(K) = {\sum\limits}_{\xxx \in \mathcal{J}_n} \mu(S_\xxx(K)) = {\sum\limits}_{\xxx \in \mathcal{J}_n} r_\xxx^\alpha.
$$
As $r_\xxx \in (r^{n+1}, r^n]$, we have $(\# \mathcal{J}_n)r^{\alpha(n+1)} < 1 \leq (\# \mathcal{J}_n)r^{\alpha n}$. This implies (ii).
\end{proof}

 To introduce a graph structure on $X$, we let
$$
{\mathfrak E}_v=\{(\xxx,\yyy) \in X \times X:\
\xxx=\yyy^- \text{ or } \yyy=\xxx^-\},
$$
be the set of edges of the original tree structure on $X$, and use the notation $\xxx \sim_v \yyy$ for $(\xxx,\yyy)\in {\mathfrak E}_v$. We call ${\mathfrak E}_v$ the set of {\it vertical edges} of $X$. Now we augment the tree by adding more {\it horizontal edges}.

\medskip

\begin{definition} \label{de3.1}
Let $X$ be the modified symbolic space associated with $\{S_i\}_{i=1}^N$. We call $(X,{\mathfrak E})$ a {\it pre-augmented tree} if ${\mathfrak E} = {\mathfrak E}_v \cup {\mathfrak E}_h$, where ${\mathfrak E}_h$ satisfies the condition
\begin{equation} \label{eq3.2}
(\xxx,\yyy) \in {\mathfrak E}_h \Rightarrow |\xxx|=|\yyy|, \hbox{ with } \xxx^- = \yyy^- \hbox{ or } (\xxx^-,\yyy^-) \in {\mathfrak E}_h.
\end{equation}
We write $\xxx\sim_h \yyy$ if $(\xxx,\yyy) \in {\mathfrak E}_h$.
\end{definition}

\medskip

For any vertices $\xxx$, $\yyy$ in $(X, {\mathfrak E})$, we say that the geodesic $\pi (\xxx,\yyy)$
is a {\it horizontal geodesic} if it consists of horizontal edges only, and define {\it vertical geodesics} analogously. The geodesic between two vertices $\xxx, \yyy$ may not be unique (see Figure \ref{fig:1}), but there is always a
{\it canonical geodesic}  such that there exist $\mathbf{u}$,
$\mathbf{v} \in \pi (\xxx,\yyy)$ with
\begin{enumerate}[(i)]
\item $\pi (\xxx, \yyy) = \pi (\xxx, \mathbf{u}) \cup
\pi (\mathbf{u}, \mathbf{v}) \cup \pi (\mathbf{v}, \yyy)$,
where $\pi (\mathbf{u}, \mathbf{v})$ is a horizontal geodesic and
$\pi (\xxx, \mathbf{u})$, $\pi (\mathbf{v}, \yyy)$
are vertical geodesics.

\item for any geodesic $\pi' (\xxx, \yyy)$,
$\text{dist} (\vartheta, \pi (\xxx, \yyy)) \leq
\text{dist} (\vartheta, \pi' (\xxx, \yyy))$.
\end{enumerate}

\begin{figure}[ht]
\begin{center}
\includegraphics[width=39mm,height=30mm]{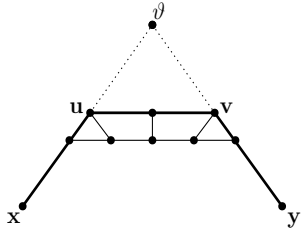}
\caption{{\bf} Canonical geodesic} \label{fig:1}
\end{center}
\end{figure}

If we let $\ell = {\rm dist} (\vartheta, \pi (\bf u, \bf v))$, and $h= d(\bf u , \bf v)$, then the canonical geodesic has a simple geometric interpretation for the Gromov product \eqref{eq2.0},
\begin{equation} \label{eq3.1'}
(\xxx|\yyy) = \ell - \frac 12 h .
\end{equation}
We will use this expression frequently. As a consequence we have (see also \cite [Theorem 3.13] {Ka} for the ``no big squares" condition)

\medskip

\begin{proposition} \label{th3.2} \hspace{-2mm} {\rm \cite[Theorem 2.3]{LW1}} A pre-augmented tree $(X,{\mathfrak E})$ is hyperbolic if and only if there exists $M>0$ such that the lengths of all horizontal geodesics are bounded by $M$.
\end{proposition}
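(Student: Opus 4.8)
The plan is to derive both implications from the description of the canonical geodesic, the product formula \eqref{eq3.1'}, and one elementary consequence of \eqref{eq3.2} that I will call the \emph{projection lemma}: if two vertices $\ppp,\qqq$ lie at a common level $n$ and are joined by a horizontal path of length $h$, then for every $k\le n$ their level-$k$ ancestors $\ppp^{-(n-k)}$ and $\qqq^{-(n-k)}$ are joined by a horizontal path of length at most $h$, hence $d(\ppp^{-(n-k)},\qqq^{-(n-k)})\le h$. I would prove this by induction on $n-k$, reducing to the step $k=n-1$: writing the path as $\ppp=\www_0\sim_h\cdots\sim_h\www_h=\qqq$, condition \eqref{eq3.2} gives for each $i$ that $\www_i^-=\www_{i+1}^-$ or $\www_i^-\sim_h\www_{i+1}^-$, so deleting repetitions from the sequence $\www_0^-,\dots,\www_h^-$ produces a horizontal path of length $\le h$ from $\ppp^-$ to $\qqq^-$. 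I will also use the trivial fact $d(\vartheta,\www)=|\www|$ (the ancestry chain of $\www$ is a path of length $|\www|$, and no path is shorter since each edge changes the level by at most one); in particular all vertices of a horizontal geodesic sit at one level.

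For the easy direction, \emph{hyperbolic $\Rightarrow$ horizontal geodesics bounded}, suppose $(X,{\mathfrak E})$ is $\delta$-hyperbolic and $\pi=[\www_0,\dots,\www_L]$ is a horizontal geodesic, so all $\www_i$ are at a common level $n$ and every sub-path of $\pi$ is again a geodesic. Taking $\zzz=\www_{\lfloor L/2\rfloor}$, one has $d(\www_0,\zzz),d(\zzz,\www_L)\approx L/2$ and $d(\www_0,\www_L)=L$, hence $(\www_0|\zzz),(\zzz|\www_L)\approx n-L/4$ while $(\www_0|\www_L)=n-L/2$; feeding these into \eqref{eq2.1} forces $L\le 4\delta+O(1)$ (the $O(1)$ absorbing the parity of $L$), so one may take $M=4\delta+1$.

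For the converse, assume every horizontal geodesic has length $\le M$; I claim \eqref{eq2.1} holds with $\delta=M$. Fix $\xxx,\yyy,\zzz\in X$ and put $m=\min\{(\xxx|\zzz),(\zzz|\yyy)\}$. A canonical geodesic from $\xxx$ to $\zzz$ runs vertically up from $\xxx$ to an ancestor $\uuu_1$ at some level $\ell_1$, then along a horizontal geodesic $\pi(\uuu_1,\vvv_1)$ of length $h_1\le M$ to the ancestor $\vvv_1$ of $\zzz$ at level $\ell_1$, then vertically down to $\zzz$; by \eqref{eq3.1'}, $(\xxx|\zzz)=\ell_1-h_1/2$, so $\ell_1\ge(\xxx|\zzz)\ge m$. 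Symmetrically, a canonical geodesic from $\zzz$ to $\yyy$ yields a level $\ell_2\ge m$ at which the level-$\ell_2$ ancestors of $\zzz$ and $\yyy$ are joined by a horizontal path of length $\le M$. Set $\ell=\min\{\ell_1,\ell_2\}$; then $m\le\ell\le\min\{|\xxx|,|\yyy|,|\zzz|\}$, so the three level-$\ell$ ancestors exist, and the projection lemma (applied from $\ell_1$ down to $\ell$ and from $\ell_2$ down to $\ell$) shows that the level-$\ell$ ancestors of $\xxx$ and $\zzz$, and of $\zzz$ and $\yyy$, lie within distance $M$ of one another. By the triangle inequality the level-$\ell$ ancestors of $\xxx$ and $\yyy$ lie within $2M$, so concatenating the two vertical ancestry chains with the realizing path gives $d(\xxx,\yyy)\le(|\xxx|-\ell)+2M+(|\yyy|-\ell)$, i.e. $(\xxx|\yyy)\ge\ell-M\ge m-M$. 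This is \eqref{eq2.1} with $\delta=M$, completing the equivalence.

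The genuinely load-bearing step is the converse, and within it the only point that needs care is the choice of the comparison level $\ell$: it must be large enough ($\ell\ge m$) for the resulting bound on $d(\xxx,\yyy)$ to be strong enough, yet no larger than $\min\{|\xxx|,|\yyy|,|\zzz|\}$ so that the ancestors involved actually exist — which is exactly why one routes through the levels $\ell_1,\ell_2$ of the two canonical geodesics rather than attempting to work at level $m$ directly. The projection lemma and the canonical-geodesic normal form together with \eqref{eq3.1'} are the remaining inputs, the latter being already recorded in the excerpt (and in \cite{LW1}); I do not anticipate any serious obstacle beyond this bookkeeping.
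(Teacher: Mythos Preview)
The paper does not actually supply a proof of this proposition; it is quoted as \cite[Theorem 2.3]{LW1} and invoked without argument. Your proposal is a correct self-contained proof, and the ingredients you isolate --- the projection lemma (which is exactly the content of condition \eqref{eq3.2}), the canonical-geodesic normal form, and the identity \eqref{eq3.1'} --- are the natural ones; this is essentially how the result is established in \cite{LW1}. One minor remark: in the converse you obtain $\delta=M$, which is fine, but note that the $2M$ in your concatenated path need not arise from a horizontal \emph{geodesic}, only a horizontal path, so you are using the projection lemma (which bounds path length) rather than the hypothesis directly at that step --- you handle this correctly, I just flag it since it is the one place a careless reader might conflate ``horizontal path'' with ``horizontal geodesic''.
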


\medskip

The concept of pre-augmented tree is rather flexible (see Remark 2 in the following). For our purpose, we will use the following specific horizontal edge set.

\medskip

\begin{definition} \label{de3.3} \hspace{-2mm} {\rm\cite{LW3}}~For $\gamma > 0$,  we define a horizontal edge set ${\mathfrak E}_h^\gamma$ on $X$ by
\begin{equation} \label{eq3.3}
{\mathfrak E}_h^\gamma=\{(\xxx,\yyy) \in X \times X:\  \xxx \neq \yyy, \
|\xxx|=|\yyy| {\it \text{ and }} \inf\limits_{\xi, \eta \in K}|S_{\xxx}(\xi) - S_{\yyy}(\eta)| \leq \gamma \cdot r^{|\xxx|}\}.
\end{equation}
The {\it augmented tree} $(X,{\mathfrak E}^\gamma)$ is the graph with edge set ${\mathfrak E}^\gamma={\mathfrak E}_v \cup {\mathfrak E}_h^\gamma$.
\end{definition}

\medskip
It is direct to check that $(X,{\mathfrak E}^\gamma)$ is a pre-augmented tree. As the constant $\gamma>0$ has no significance on the edges in the levels ${\mathcal J}_n$ when $n$ is large, we will omit the superscript $\gamma$ in the notations when there is no confusion.  The following theorem is the main reason for introducing the augmented tree.

\medskip

\begin{theorem} \label{th3.6} \hspace{-2mm} {\rm\cite{Ka,LW1,LW3}}~Let $\{S_i\}_{i=1}^N$ be an IFS of contractive similitudes and let $K$ be the self-similar set. Let $(X,{\mathfrak E})$ be the augmented tree. Then
\begin{enumerate}[(i)]
\item $(X,{\mathfrak E})$ is hyperbolic;

\item there is a canonical identification ${\iota}: \partial_H X \rightarrow K$ (independent of $\gamma>0$) defined by $\{{\iota}(\xi)\} = \bigcap_n S_{\xxx_n}(K)$ and $(\xxx_n)_n$ is a geodesic ray converging to $\xi$.  Under this map, $K$ is H\"{o}lder equivalent to $\partial_H X$ in the sense that $\varrho_a(\xi, \eta) \asymp |{\iota}(\xi)-{\iota}(\eta)|^{-a/ \log r}$.
\end{enumerate}
\end{theorem}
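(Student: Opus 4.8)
\emph{Overview.} The plan is to prove the two assertions in sequence: (i) is reduced to the geometric criterion of Proposition~\ref{th3.2}, and (ii) is obtained by constructing $\iota$ by hand and then proving a single two-sided estimate that simultaneously yields well-definedness, bijectivity, continuity, and the H\"older exponent.

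\emph{Part (i): hyperbolicity.} By Proposition~\ref{th3.2} it suffices to produce a constant $M$ bounding the length of every horizontal geodesic of $(X,{\mathfrak E}^\gamma)$. The basic input is the size estimate: for $\xxx\in\mathcal J_n$ one has $r^{n+1}<r_\xxx\le r^n$, so $\diam S_\xxx(K)=r_\xxx\diam K\asymp r^n$ with constants depending only on the IFS. Given a horizontal geodesic $[\xxx_0,\dots,\xxx_k]$ at level $n$, two structural facts are available: consecutive cells satisfy $\mathrm{dist}(S_{\xxx_i}(K),S_{\xxx_{i+1}}(K))\le\gamma r^n$ by \eqref{eq3.3}, so the whole chain of cells lies in a ball of radius $O(kr^n)$; and, because the path is a \emph{geodesic}, non-consecutive vertices are not joined by an edge, so $\mathrm{dist}(S_{\xxx_i}(K),S_{\xxx_j}(K))>\gamma r^n$ whenever $|i-j|\ge 2$. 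Passing simultaneously to all ancestor levels $n,n-1,\dots$ — where the ancestors again form horizontal walks by \eqref{eq3.2}, and where each word has only boundedly many children — one controls the growth of the number of distinct ancestors against the shrinking spatial scale, and a packing argument in $\mathbb R^d$ then caps $k$ independently of $n$ and of the geodesic. This is the geometric core; it extends Kaimanovich's ``no big squares'' condition \cite[Theorem~3.13]{Ka} and is carried out in \cite{LW1,LW3}, and I would import it as the key technical input. Proposition~\ref{th3.2} then gives (i).

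\emph{Part (ii): the identification.} First observe that a geodesic ray $(\xxx_n)_{n\ge0}$ issuing from $\vartheta$ is necessarily \emph{vertical}: $d(\vartheta,\xxx_n)=n$ forces $\xxx_n\in\mathcal J_n$, and since consecutive vertices differ in level the edge $\xxx_n\sim\xxx_{n+1}$ must be vertical, so $\xxx_n=\xxx_{n+1}^-$; conversely every infinite descent in $X$ is a geodesic ray. Hence geodesic rays from $\vartheta$ correspond bijectively to $\Sigma^\infty$, and I define $\iota$ on the ray determined by $\omega\in\Sigma^\infty$ by $\{\iota(\omega)\}=\bigcap_nS_{\xxx_n}(K)=\{\kappa(\omega)\}$, which involves no choice of $\gamma$. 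Everything else follows from the single estimate
\[
|\iota(\xi)-\iota(\eta)|\ \asymp\ r^{\,(\xi|\eta)},\qquad \xi,\eta\in\partial_HX:
\]
it shows $\iota$ descends to a well-defined injection on $\partial_HX=\{\text{rays}\}/\!\sim$ (since $(\xi|\eta)=\infty$ iff $\iota(\xi)=\iota(\eta)$), it is onto because every point of $K$ has a coding, and applying $-\log$ and using $\varrho_a(\xi,\eta)=e^{-a(\xi|\eta)}$ turns it into $\varrho_a(\xi,\eta)\asymp|\iota(\xi)-\iota(\eta)|^{-a/\log r}$ (the exponent is positive since $\log r<0$); compactness of $\partial_HX$ and of $K$ upgrades this bi-H\"older map to a homeomorphism. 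To prove the estimate, introduce for distinct $\zeta,\zeta'\in K$ the separation index
\[
n(\zeta,\zeta')=\max\{n\ge 0:\ \exists\,\mathbf a,\mathbf b\in\mathcal J_n,\ \zeta\in S_{\mathbf a}(K),\ \zeta'\in S_{\mathbf b}(K),\ \mathbf a=\mathbf b\ \text{or}\ \mathbf a\sim_h\mathbf b\},
\]
finite because level-$n$ cells are eventually $\gamma r^n$-separated. From \eqref{eq3.3} one gets $|\zeta-\zeta'|\le(2\diam K+\gamma)\,r^{n(\zeta,\zeta')}$, while maximality applied at level $n(\zeta,\zeta')+1$ to the level-$(n(\zeta,\zeta')+1)$ prefixes of arbitrary codings gives $|\zeta-\zeta'|>\gamma r\cdot r^{n(\zeta,\zeta')}$; hence $|\zeta-\zeta'|\asymp r^{n(\zeta,\zeta')}$. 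It then remains to compare $(\xi|\eta)$ with $n:=n(\iota(\xi),\iota(\eta))$ using the level-$m$ prefixes $\xxx_m,\yyy_m$ of the two rays. For the upper bound, \eqref{eq3.1'} gives $(\xxx_m|\yyy_m)=\ell_m-\tfrac12 h_m\le\ell_m$ where $\ell_m$ is the level of the horizontal part of the canonical geodesic from $\xxx_m$ to $\yyy_m$ and $h_m\le M$ its length; its top vertices are $\xxx_{\ell_m},\yyy_{\ell_m}$, joined by a horizontal path of length $\le M$, so $\mathrm{dist}(S_{\xxx_{\ell_m}}(K),S_{\yyy_{\ell_m}}(K))\le M\gamma r^{\ell_m}$ and $|\iota(\xi)-\iota(\eta)|\le(2\diam K+M\gamma)r^{\ell_m}$, which together with $|\iota(\xi)-\iota(\eta)|>\gamma r\cdot r^n$ forces $\ell_m\le n+O(1)$. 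For the lower bound, pick $\mathbf a,\mathbf b\in\mathcal J_n$ realizing $n$; since $S_{\mathbf a}(K)$ and $S_{\xxx_n}(K)$ both contain $\iota(\xi)$ they are at distance $0\le\gamma r^n$, so $d(\mathbf a,\xxx_n)\le 1$, likewise $d(\mathbf b,\yyy_n)\le1$ and $d(\mathbf a,\mathbf b)\le 1$, whence $d(\xxx_m,\yyy_m)\le 2(m-n)+3$ and $(\xxx_m|\yyy_m)\ge n-\tfrac32$. Thus $(\xxx_m|\yyy_m)=n+O(1)$ for all large $m$, hence $(\xi|\eta)=n+O(1)=\log|\iota(\xi)-\iota(\eta)|/\log r+O(1)$, which is the claimed estimate.

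\emph{Main obstacle.} The genuine difficulty is Part~(i): a naive packing of the $\gamma$-separated subchain of a horizontal geodesic only gives $k\lesssim k^{d}$ and hence no bound, so one must exploit the tree of ancestors across all levels at once — precisely the delicate point settled in \cite{LW1,LW3}. Once $M$ is available, Part~(ii) is routine bookkeeping with \eqref{eq3.1'}; the only points needing care are keeping the $O(1)$'s independent of $\xi,\eta$ and accommodating points of $K$ with several codings, which is handled above via the remark that any two same-level cells sharing a point are horizontally adjacent.
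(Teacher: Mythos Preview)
The paper does not prove Theorem~\ref{th3.6}; it is quoted from \cite{Ka,LW1,LW3} and used as a black box. So there is no ``paper's proof'' to compare against beyond the citation itself. Your handling of part~(i) is therefore exactly in line with the paper: you reduce to Proposition~\ref{th3.2} and then import the uniform bound on horizontal geodesics from \cite{LW1,LW3}, which is all the paper does.

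For part~(ii) you supply an actual argument where the paper gives none, and it is essentially correct. Two small points. First, the estimate $\mathrm{dist}(S_{\xxx_{\ell_m}}(K),S_{\yyy_{\ell_m}}(K))\le M\gamma r^{\ell_m}$ along a horizontal chain of length $\le M$ is not quite right as written: chaining the inequality $\mathrm{dist}(A,C)\le\mathrm{dist}(A,B)+\diam B+\mathrm{dist}(B,C)$ picks up the intermediate cell diameters, so the correct bound is $(M\gamma+(M{-}1)\diam K)\,r^{\ell_m}$. This does not affect the conclusion, which only needs $O(r^{\ell_m})$. Second, when you pass from $(\xxx_m|\yyy_m)$ to $(\xi|\eta)$ you are using one specific pair of rays, whereas \eqref{eq3.5} takes a supremum over all rays; Corollary~\ref{th3.9} (which you implicitly invoke via ``any two same-level cells sharing a point are horizontally adjacent'') shows the discrepancy is at most $1$, so the $O(1)$ absorbs it---but it is worth saying explicitly. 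With these cosmetic fixes your argument for (ii) stands.
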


\medskip

\noindent {\it Remark 1.}
 Let  $e_0 =0$ and $e_i$, $1\leq i \leq d$, be the standard basis vectors in ${\mathbb R}^d$. Let $\{S_i\}_{i=0}^d$ be an IFS on ${\mathbb R}^d$ defined by
$$
S_i(\xi) = e_i + \frac{1}{2}(\xi-e_i), \quad  \xi \in {\mathbb{R}}^d.
$$
 Then the self-similar set $K$ is the {\it $d$-dimensional Sierpi\'{n}ski gasket}. When $d = 1$, $K$ is simply the unit interval;  when $d=2$, $K$ is the standard Sierpi\'{n}ski gasket. Theorem \ref{th3.6} was first proved in \cite[Section~3]{Ka} for the Sierpi\'{n}ski gaskets with a slightly different  horizontal edge set $\ed_h^0$ (see Section \ref{sec:1}; call this augmented tree the Sierpi\'{n}ski graph). The extension to general self-similar sets was in \cite{LW1}, where we need to assume, in addition, the OSC and some geometric condition on $K$. For many standard cases, ${\mathfrak E}^\gamma_h = {\mathfrak E}^0_h$ for small $\gamma$, but there are examples where they are different for any $\gamma >0$. The setup in Definition \ref{de3.3} with $\gamma >0$ allows us to avoid the delicate behavior on $S_{\xxx}(K) \cap S_{\yyy}(K)$ without changing the boundary. Consequently, in \cite{LW3}, the technical assumptions in \cite{LW1} were eliminated, and Theorem \ref{th3.6} was stated in the most general setting.

\bigskip

\noindent {\it Remark 2.} A pre-augmented tree can be constructed easily, and can be trivial; for example, we can connect all the vertices in each level by edges and still form a hyperbolic graph. On the other hand, if we augment the tree by suitable edges, we can obtain other interesting graphs. For example, starting with the binary augmented tree of the interval ($d=1$ in Remark 1), we can add one more horizontal edge on each level to connect the two end vertices. This new graph is a pre-augmented tree and is hyperbolic (see Figure~\ref{fig:3} or \cite{Ne}).  It is easy to modify the proof of Theorem \ref{th3.6} and show that the hyperbolic boundary is H\"older equivalent to the unit circle.

\begin{figure}[ht]
\begin{center}
\includegraphics[width=39mm,height=39mm]{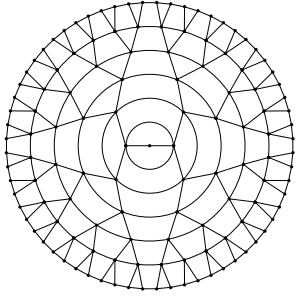}
\caption{{\bf} The circle as a hyperbolic boundary}
\label{fig:3}
\end{center}
\end{figure}

\medskip

That an augmented tree has bounded degree is important when we consider random walks on it. This property is verified when the IFS has the OSC.

\medskip

\begin{proposition} \label{th3.7} \hspace{-2mm} {\rm \cite[Theorem 1.4]{LW3}}~The IFS $\{S_i\}_{i=1}^N$ satisfies the OSC if and only if the augmented tree $(X, {\mathfrak E})$ has bounded degree.
\end{proposition}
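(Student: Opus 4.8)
The plan is to split the degree of a vertex $\xxx\in\mathcal{J}_n$ into its vertical and horizontal parts. For the vertical part I would note that $\xxx$ has a single parent and that its children are exactly the words $\xxx\zzz\in\mathcal{J}_{n+1}$; writing $\zzz=j_1\cdots j_m$, the defining inequality of $\mathcal{J}_{n+1}$ gives $r^{n+1}<r_{\xxx j_1\cdots j_{m-1}}=r_\xxx\, r_{j_1\cdots j_{m-1}}\le r^n(\max_i r_i)^{m-1}$, so $m$ is bounded by a constant depending only on the ratios $r_i$, whence the number of children is at most $N^m$ uniformly in $\xxx$. Thus the vertical degree is always uniformly bounded, and the proposition reduces to showing that the horizontal degree $\deg_h(\xxx)=\#\{\yyy\in\mathcal{J}_n:\yyy\ne\xxx,\ \mathrm{dist}(S_\xxx(K),S_\yyy(K))\le\gamma r^n\}$ is uniformly bounded in $\xxx$ if and only if the OSC holds.

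For the implication ``OSC $\Rightarrow$ bounded degree'' I would fix a bounded open set $U$ with $S_i(U)\subset U$ and $S_i(U)\cap S_j(U)=\emptyset$ for $i\ne j$. Since $\mathcal{J}_n$ is an antichain, the sets $\{S_\xxx(U):\xxx\in\mathcal{J}_n\}$ are pairwise disjoint, each of Lebesgue measure $r_\xxx^d|U|\ge r^{(n+1)d}|U|$. Because $\bigcup_iS_i(\overline U)\subseteq\overline U$, the attractor satisfies $K\subseteq\overline U$, so for each $\xxx$ I can choose $q_\xxx\in S_\xxx(U)$ within $r^n$ of $S_\xxx(K)$; if $\yyy\sim_h\xxx$ then $q_\yyy$ lies within a fixed multiple of $r^n$ of a fixed point of $S_\xxx(K)$, and hence $S_\yyy(U)$ is contained in a ball $B(z_\xxx,Cr^n)$ with $C=C(\gamma,\diam K,\diam U)$. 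Comparing $|B(z_\xxx,Cr^n)|$ with the total volume of these pairwise-disjoint sets bounds $\deg_h(\xxx)$ uniformly.

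The converse, ``bounded degree $\Rightarrow$ OSC'', is the part I expect to be the real work, and I would carry it out in three steps. \emph{Step 1 (bounded overlap at every scale).} Assuming $\deg(\cdot)\le D$, I would show that for each fixed $\rho>0$ there is $M=M(\rho,D,d,\gamma)$ such that every ball of radius $\rho r^n$ meets at most $M$ of the cells $\{S_\xxx(K):\xxx\in\mathcal{J}_n\}$: cover the ball by $O((\rho/\gamma+1)^d)$ balls of radius $\tfrac\gamma2r^n$, choose for each relevant $\xxx$ a point of $S_\xxx(K)$ inside the big ball, and note that cells whose chosen points land in a common small ball are pairwise within distance $\gamma r^n$ and hence form a horizontal clique of size at most $D+1$. \emph{Step 2 (upper volume bound).} Let $\mu$ be the self-similar measure with the natural weights $p_i=r_i^\alpha$; it is the push-forward under $\kappa$ of the Bernoulli measure $\nu$ on $\Sigma^\infty$ with $\nu([\yyy])=r_\yyy^\alpha$, and $\{[\yyy]:\yyy\in\mathcal{J}_n\}$ partitions $\Sigma^\infty$. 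Given $x$ and $t$, choose $n$ with $r^n\le t<r^{n-1}$; each $\yyy\in\mathcal{J}_n$ with $S_\yyy(K)\cap B(x,t)\ne\emptyset$ is one of $\le M$ cells, and for each such $\yyy$ the set $\kappa^{-1}(S_\yyy(K))$ is covered by the $\le M'$ cylinders $[\zzz]$, $\zzz\in\mathcal{J}_n$, whose cells meet $S_\yyy(K)$ (a point of $S_\yyy(K)$ with coding prefix $\zzz$ forces $S_\zzz(K)\cap S_\yyy(K)\ne\emptyset$), each of $\nu$-mass $r_\zzz^\alpha\le r^{\alpha n}$; hence $\mu(S_\yyy(K))\le M'r^{\alpha n}$ and $\mu(B(x,t))\le MM'r^{\alpha n}\le Ct^\alpha$. \emph{Step 3.} I would then apply the mass distribution principle to get $\mathcal{H}^\alpha(K)\ge\mu(K)/C>0$, and conclude the OSC via Schief's theorem (for similitudes on $\mathbb R^d$, the OSC is equivalent to $\mathcal{H}^\alpha(K)>0$).

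The only non-elementary ingredients are Schief's theorem and the mass distribution principle; the delicate point is Step 2, where the self-similar measure must be controlled without assuming the OSC a priori, and this is exactly where the bounded-overlap conclusion of Step 1 is indispensable.
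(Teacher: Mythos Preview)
The paper does not prove this proposition; it simply quotes it from \cite{LW3}. Your proposal supplies a correct self-contained argument, and the individual steps are sound. The vertical-degree bound and the forward implication (OSC $\Rightarrow$ bounded degree via disjointness of the $S_\xxx(U)$ and a volume comparison) are standard. For the converse, your Step~1 is right: points chosen in a common ball of radius $\tfrac{\gamma}{2}r^n$ force the corresponding cells into a horizontal clique, whose size is bounded by the degree plus one. The key subtlety you handle well is in Step~2: without the OSC one cannot assert $\mu(S_\yyy(K))=r_\yyy^\alpha$, and you correctly bypass this by bounding $\kappa^{-1}(S_\yyy(K))$ by the cylinders $[\zzz]$ with $S_\zzz(K)\cap S_\yyy(K)\neq\emptyset$, whose number is controlled by Step~1 again; this gives $\mu(B(x,t))\le Ct^\alpha$, and then the mass distribution principle plus Schief's theorem \cite{Sc} finish the job.

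Since the paper offers no proof to compare against, there is nothing further to contrast; your route through $\mathcal{H}^\alpha(K)>0$ and Schief's characterization is a natural and efficient way to recover the OSC from the bounded-degree hypothesis.
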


\medskip

To conclude this section, we will use the special form of the canonical geodesic to provide more information of the Gromov product in the augmented tree, which will be needed in Section~\ref{sec:6}. Note that $\xxx_n \in {\mathcal J}_n$ for any geodesic ray  $(\xxx_n)_{n=0}^\infty$ in $X$ (starting from $\vartheta)$.

\medskip

\begin{lemma} \label{th3.8}
Let $(X, {\mathfrak E})$ be a pre-augmented tree as in Definition \ref{de3.1}. Let $\zzz \in X$, and let  $(\xxx_n)_{n=0}^\infty$ and $(\yyy_n)_{n=0}^\infty$ be two  distinct geodesic rays from $\vartheta$. Then we have

\begin {enumerate}[(i)]

\item  for $n\geq |\zzz|:=k$, $d(\xxx_n, \zzz) - |\xxx_n| = d(\xxx_{k}, \zzz) - k$;

\item   $\{(\xxx_n|\zzz)\}_n$ is increasing, and
$
(\xxx_n|\zzz) = (\xxx_k|\zzz)$ for  $n \geq k$;

\item  $\{(\xxx_n|\yyy_n)\}_n$ is increasing, and there exists $\ell$ such that
$ (\xxx_n|\yyy_n) = (\xxx_\ell|\yyy_\ell)$ for $n\geq \ell$.
\end{enumerate}
\end{lemma}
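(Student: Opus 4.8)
The plan is to exploit the special structure of the canonical geodesic, in particular the identity \eqref{eq3.1'}, together with the elementary observation that a geodesic ray from $\vartheta$ is a \emph{vertical} geodesic: if $(\xxx_n)_{n=0}^\infty$ is a geodesic ray from $\vartheta$, then $\xxx_{n-1} = \xxx_n^-$ for all $n$, since $d(\vartheta,\xxx_n) = n$ forces each initial segment $[\vartheta,\xxx_1,\dots,\xxx_n]$ to use only vertical edges (a horizontal edge would not decrease the distance to $\vartheta$, and by Definition~\ref{de3.1} would have to be preceded by a horizontal edge at the parent level, cascading down to level~$1$, which is impossible). So $\xxx_n \in \mathcal{J}_n$ and $\xxx_n^{-(n-k)} = \xxx_k$.

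For part~(i), fix $\zzz$ with $|\zzz| = k$ and $n \geq k$. First I would show $d(\xxx_n,\zzz) = d(\xxx_k,\zzz) + (n-k)$. The inequality ``$\leq$'' is clear: concatenate a geodesic from $\xxx_n$ down to $\xxx_k$ (of length $n-k$, since $\xxx_k = \xxx_n^{-(n-k)}$) with a geodesic from $\xxx_k$ to $\zzz$. For ``$\geq$'', consider a canonical geodesic $\pi(\xxx_n,\zzz) = \pi(\xxx_n,\uuu)\cup\pi(\uuu,\vvv)\cup\pi(\vvv,\zzz)$ with $\pi(\uuu,\vvv)$ horizontal at some level $m$ and the two outer pieces vertical. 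Since $|\xxx_n| = n > m$ cannot hold with $m < k$ unless... — more precisely, I would argue that the descending vertical leg from $\xxx_n$ must pass through level $k$, and the vertex of $\pi(\xxx_n,\zzz)$ at level $k$ must be $\xxx_k$ (it is the ancestor of $\xxx_n$ at level $k$), so the portion of $\pi(\xxx_n,\zzz)$ from $\xxx_n$ to that level-$k$ vertex has length $n-k$, while the remainder is a path from $\xxx_k$ to $\zzz$ of length $d(\xxx_n,\zzz)-(n-k)$, hence $\geq d(\xxx_k,\zzz)$. Subtracting $|\xxx_n| = n$ and $|\xxx_k| = k$ from $d(\xxx_n,\zzz) = d(\xxx_k,\zzz) + (n-k)$ gives (i). The one subtlety is the case where the horizontal level $m$ of the canonical geodesic sits \emph{below} $k$ — but then $\vvv$ (the endpoint meeting the leg toward $\zzz$) has $|\vvv| = m \leq k$; if $m < k$ this means $\zzz$ lies strictly below level $m$ along a vertical path, contradicting $|\zzz| = k > m$ only if the leg is nontrivial, so $m \le k$ and one checks directly that the ancestor of $\xxx_n$ at level $m$ followed by the horizontal and vertical legs still realizes the claimed length; I expect this bookkeeping to be the main (though minor) obstacle.

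Part~(ii) is then immediate: by definition $(\xxx_n|\zzz) = \tfrac12(|\xxx_n| + |\zzz| - d(\xxx_n,\zzz)) = \tfrac12(n + k - d(\xxx_n,\zzz))$, and (i) says $d(\xxx_n,\zzz) - n$ is constant in $n$ for $n \geq k$, so $(\xxx_n|\zzz) = \tfrac12(k - (d(\xxx_n,\zzz)-n))$ is constant and equals $(\xxx_k|\zzz)$; for $n < k$ monotonicity follows since passing from $\xxx_{n}$ to $\xxx_{n+1}$ along the ray can only increase the common structure with $\zzz$ (formally, $d(\xxx_{n+1},\zzz) \le d(\xxx_n,\zzz)+1$ while $|\xxx_{n+1}| = |\xxx_n|+1$, giving $(\xxx_{n+1}|\zzz) \ge (\xxx_n|\zzz)$). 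For part~(iii), I would apply part~(ii) with $\zzz$ replaced by a moving point on the second ray, but the cleanest route is to use the canonical-geodesic formula \eqref{eq3.1'} directly: write $(\xxx_n|\yyy_n) = \ell_n - \tfrac12 h_n$ where $\pi(\xxx_n,\yyy_n)$ has its horizontal part at level $\ell_n$ with length $h_n \le M$ (Proposition~\ref{th3.2}). As $n$ grows, the two rays either eventually diverge — in which case the ``branch level'' $\ell$ stabilizes, the descending legs below it are forced, and $\ell_n, h_n$ are eventually constant — or they represent the same boundary point, which is excluded since the rays are distinct and $(\xxx_n|\yyy_n)$ would tend to $\infty$, contradicting... no: distinct geodesic rays can still converge to the same boundary point, but then the \emph{sup} of $(\xxx_n|\yyy_n)$ is $\infty$; however with the horizontal-length bound $h_n \le M$ we would need $\ell_n \to \infty$, meaning the rays share arbitrarily long initial segments, hence (being geodesic rays, i.e. vertical) are literally equal — contradiction. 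So $\ell_n$ is bounded, and monotonicity of $(\xxx_n|\yyy_n)$ (again from $d(\xxx_{n+1},\yyy_{n+1}) \le d(\xxx_n,\yyy_n)+2$) together with boundedness forces eventual constancy; I would record the stabilization index as $\ell$. The main work is thus concentrated in part~(i); parts~(ii) and~(iii) are short deductions from it and from \eqref{eq3.1'}.
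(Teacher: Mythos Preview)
Your overall plan matches the paper's: locate $\xxx_k$ on the vertical leg of the canonical geodesic $\pi(\xxx_n,\zzz)$ to obtain (i), deduce (ii), and for (iii) combine monotonicity with boundedness to force eventual constancy. For (i) the paper is a bit more direct than your two-sided inequality: since the leg $\pi(\vvv,\zzz)$ is vertical with $|\zzz|=k$, one has $|\uuu|=|\vvv|\le k$ automatically, so $\xxx_k$ lies on $\pi(\xxx_n,\uuu)$ and the identity $d(\xxx_n,\zzz)-|\xxx_n|=d(\xxx_k,\zzz)-k$ drops out without any case analysis on the horizontal level.

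There is, however, a genuine gap in your argument for (iii). You assert that if $(\xxx_n|\yyy_n)\to\infty$ then, via $h_n\le M$, the level $\ell_n\to\infty$, ``meaning the rays share arbitrarily long initial segments'' and hence coincide. This inference fails: $\ell_n$ is merely the level of the horizontal segment of $\pi(\xxx_n,\yyy_n)$, not the length of a common initial segment. For the IFS $S_1(x)=x/2$, $S_2(x)=(x+1)/2$ on $[0,1]$, let $\xxx_n = 1\underbrace{2\cdots 2}_{n-1}$ and $\yyy_n = 2\underbrace{1\cdots 1}_{n-1}$. These are distinct vertical geodesic rays with $\xxx_n\sim_h\yyy_n$ for all $n\ge 1$ (the cells meet at $\tfrac12$), so $d(\xxx_n,\yyy_n)=1$, $\ell_n=n$, and $(\xxx_n|\yyy_n)=n-\tfrac12\to\infty$; yet the rays share only $\vartheta$. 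So (iii) is simply false if ``distinct'' means distinct as sequences. The paper's own proof just asserts $\lim_n(\xxx_n|\yyy_n)<\infty$ from distinctness without justification, so the gap is present there too; the lemma is only ever applied to rays converging to \emph{distinct boundary points} $\xi\neq\eta$ (Corollary~\ref{th3.9} and \eqref{eq3.5}), and with that hypothesis the boundedness of $(\xxx_n|\yyy_n)$ is immediate from the definition of $\partial_H X$. You should adopt that as the working hypothesis for (iii).
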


\begin{proof}\  (i) For $n \geq |\zzz|:=k$, consider the canonical geodesic $\pi(\xxx_n, \zzz) = \pi(\xxx_n, {\bf u}) \cup \pi({\bf u},{\bf v}) \cup \pi({\bf v},\zzz)$, where $\pi({\bf u}, {\bf v})$ is a horizontal geodesic, and $\pi(\xxx_n, {\bf u})$,  $\pi({\bf v} ,\zzz)$ are vertical geodesics.
Since $|\xxx_n|= n \geq |k| \geq |{\bf u}|=|{\bf v}|$,
$\xxx_{k}$ lies on the vertical geodesic $\pi(\xxx_n, {\bf u})$. Hence (i) follows from
\begin{align*}
d(\xxx_n,\zzz)-|\xxx_n|
%= d(\xxx_n, {\bf u})+ d({\bf u},\zzz)-|\xxx_n|
= d({\bf u},\zzz)-|{\bf u}|
= d({\bf u},\zzz)+ (d(\xxx_k, {\bf u})-|\xxx_{k}|)= d(\xxx_{k}, \zzz)-k .
\end{align*}
\indent(ii) That $\{(\xxx_n|\zzz)\}$ is increasing in $n$ follows by checking the definition of the Gromov product.
Then by (i), we have
\begin{align*}
(\xxx_n|\zzz) = \dfrac{1}{2} (|\zzz|+|\xxx_n|-d(\xxx_n,\zzz))
= \dfrac{1}{2} (2k-d(\xxx_k,\zzz))
= (\xxx_k|\zzz),  \quad n\geq k.
\end{align*}
\indent(iii)\, By the same proof as the above, we see that $\{(\xxx_n|\yyy_n)\}_n$ is increasing.
Let $M$ be the maximal length of the horizontal geodesics in $(X, {\mathfrak E} )$ (by Proposition \ref{th3.2}). Since the two rays are distinct, we have $\lim_{n\to \infty} (\xxx_n|\yyy_n) < \infty$. Then there exists a positive integer $\ell$ such that $d(\xxx_n,\yyy_n) > M$ for any $n > \ell$.
That canonical geodesic $\pi(\xxx_n,\yyy_n)$ is not horizontal means both $\xxx_{n-1}$ and $\yyy_{n-1}$ lie in the two vertical segments of $\pi(\xxx_n,\yyy_n)$ respectively. Now (iii) follows since
$$
(\xxx_n|\yyy_n) = 2n-d(\xxx_n,\yyy_n) = 2(n-1)-d(\xxx_{n-1},\yyy_{n-1}) = (\xxx_{n-1}|\yyy_{n-1}), \quad n > \ell.
$$
\end{proof}

\begin {corollary} \label{th3.9} Let $(X, {\mathfrak E})$ be an augmented tree as in Definition \ref{de3.3}. Let $\xi, \eta \in \partial_H X$,  $\xi \neq \eta$, and let $(\xxx_n), (\xxx'_n), (\yyy_n), (\yyy'_n)$ be geodesic rays, where $(\xxx_n), (\xxx'_n)$ converge to $\xi$, and $(\yyy_n), (\yyy'_n)$ converge to $\eta$. Then for any $\zzz \in X$,
\begin{equation*}
\big |\lim_{n \to \infty} (\xxx_n|\zzz) - \lim_{n \to \infty} (\xxx_n'|\zzz)\big| \leq \dfrac{1}{2},
\quad \hbox {and} \quad
\big|\lim_{n \to \infty} (\xxx_n|\yyy_n) - \lim_{n \to \infty} (\xxx_n'|\yyy_n')\big| \leq 1.
\end{equation*}
\end{corollary}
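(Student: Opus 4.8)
The plan is to extract one geometric fact about the augmented tree of Definition~\ref{de3.3} and then reduce both inequalities to an elementary computation with the Gromov product \eqref{eq2.0} at a fixed level. \emph{Step 1 (the key observation).} I would first prove: if $(\xxx_n)$ and $(\xxx'_n)$ are geodesic rays from $\vartheta$ converging to the same point $\xi \in \partial_H X$, then for every $n$ either $\xxx_n = \xxx'_n$ or $\xxx_n \sim_h \xxx'_n$; in particular $d(\xxx_n,\xxx'_n) \le 1$. Indeed, both $\xxx_n$ and $\xxx'_n$ lie in $\mathcal{J}_n$, so $|\xxx_n| = |\xxx'_n|$; and by Theorem~\ref{th3.6}(ii) the point $\iota(\xi)$ equals $\bigcap_m S_{\xxx_m}(K)$ and also $\bigcap_m S_{\xxx'_m}(K)$, so $\iota(\xi) \in S_{\xxx_n}(K) \cap S_{\xxx'_n}(K)$, whence $\inf_{\zeta,\zeta'\in K}|S_{\xxx_n}(\zeta) - S_{\xxx'_n}(\zeta')| = 0 \le \gamma\, r^{|\xxx_n|}$. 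If $\xxx_n \ne \xxx'_n$, the defining condition \eqref{eq3.3} of ${\mathfrak E}_h^\gamma$ then forces $\xxx_n \sim_h \xxx'_n$. I would record this as a short lemma before Corollary~\ref{th3.9}.

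\emph{Step 2 (first inequality).} By Lemma~\ref{th3.8}(ii), with $k := |\zzz|$ one has $\lim_n (\xxx_n|\zzz) = (\xxx_k|\zzz)$ and $\lim_n (\xxx'_n|\zzz) = (\xxx'_k|\zzz)$, where $\xxx_k, \xxx'_k$ are the level-$k$ vertices of the two rays. Since $|\xxx_k| = |\xxx'_k| = |\zzz| = k$, formula \eqref{eq2.0} gives
\[
\big| (\xxx_k|\zzz) - (\xxx'_k|\zzz) \big| = \tfrac12 \big| d(\xxx_k,\zzz) - d(\xxx'_k,\zzz) \big| \le \tfrac12\, d(\xxx_k,\xxx'_k) \le \tfrac12 ,
\]
the last step by Step 1.

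\emph{Step 3 (second inequality).} Since $\xi \ne \eta$, in each of the pairs $\{(\xxx_n),(\yyy_n)\}$, $\{(\xxx'_n),(\yyy_n)\}$, $\{(\xxx'_n),(\yyy'_n)\}$ the two rays have distinct limits, so by Lemma~\ref{th3.8}(iii) the sequences $(\xxx_n|\yyy_n)$, $(\xxx'_n|\yyy_n)$, $(\xxx'_n|\yyy'_n)$ are each eventually constant; call the limits $L$, $L''$, $L'$. Picking $n$ past all three thresholds and using $|\xxx_n| = |\xxx'_n| = |\yyy_n| = n$ together with \eqref{eq2.0},
\[
|L - L''| = \tfrac12 \big| d(\xxx_n,\yyy_n) - d(\xxx'_n,\yyy_n) \big| \le \tfrac12\, d(\xxx_n,\xxx'_n) \le \tfrac12
\]
by Step 1, and the identical argument applied to the $\yyy$-rays (using $d(\yyy_n,\yyy'_n)\le 1$) gives $|L'' - L'| \le \tfrac12$. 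Adding, $|L - L'| \le 1$.

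The only real content is Step 1; everything afterwards is bookkeeping with the Gromov product at a single level. The pitfall to avoid is estimating $d(\xxx_n,\xxx'_n)$ only through Proposition~\ref{th3.2} and property \eqref{eq3.2}, which would bound it by the maximal horizontal-geodesic length $M$ and yield the weaker constants $M/2$ and $M$; the sharp $1/2$ and $1$ come precisely from the fact that the cells $S_{\xxx_n}(K)$ and $S_{\xxx'_n}(K)$ both contain $\iota(\xi)$ and are therefore joined by a \emph{single} horizontal edge.
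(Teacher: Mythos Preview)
Your proof is correct and follows essentially the same route as the paper: the key step is exactly your Step~1 (the paper likewise invokes Theorem~\ref{th3.6} to get $\iota(\xi)\in S_{\xxx_n}(K)\cap S_{\xxx'_n}(K)$ and hence $d(\xxx_n,\xxx'_n)\le 1$), and the rest is the same triangle-inequality computation with the Gromov product. The only cosmetic difference is that the paper applies the inequality $|(\xxx_n|\zzz)-(\xxx'_n|\zzz)|\le\tfrac12 d(\xxx_n,\xxx'_n)$ directly at each $n$ and then passes to the limit, whereas you first invoke Lemma~\ref{th3.8} to freeze the sequences at a fixed level; both are equivalent.
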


\begin{proof} We first observe from Theorem \ref{th3.6} that $\xi \in S_{\xxx_n} (K) \cap S_{\xxx'_n}(K)$. Thus $S_{\xxx_n} (K) \cap S_{\xxx'_n}(K) \neq \emptyset$, and by \eqref{eq3.3} either $\xxx_n=\xxx'_n$ or $\xxx_n \sim_h \xxx'_n$. It follows that $d(\xxx_n,\xxx'_n) \leq 1$ for all $n$.
Then the triangle inequality implies
\begin {equation} \label {eq3.5'}
|(\xxx_n|\zzz)-(\xxx_n'|\zzz)| = \dfrac{1}{2}|d(\xxx_n',\zzz)-d(\xxx_n,\zzz)|
\leq \dfrac{1}{2}d(\xxx_n',\xxx_n) \leq \dfrac{1}{2}.
\end{equation}
For the second part, we need only to replace the above $\zzz$ by $\yyy_n$ and $\yyy_n'$ respectively and apply the triangle inequality.
\end{proof}

\medskip

From the corollary, we can extend the Gromov product to $\partial_H X$ by
\begin{equation} \label{eq3.4}
(\xi|\zzz) = \sup \big  \{\lim_{n \to \infty} (\xxx_n|\zzz)\big \},  \qquad \xi \in \partial_H X,  \ \ \zzz \in X,
\end{equation}
and
\begin{equation} \label{eq3.5}
(\xi|\eta) = \sup \big \{\lim_{n \to \infty} (\xxx_n|\yyy_n)\big \},  \qquad \xi, \eta \in \partial_H X,
\end{equation}
where the supremum is taken over all geodesic rays $(\xxx_n)_n$ and $(\yyy_n)_n$ that converge to $\xi$ and $\eta$ respectively.

\bigskip

\section{Constant return ratio and quasi-natural RW}
\label{sec:4}

%\noindent In this section, we define and study certain reversible random walks on the pre-augmented trees. Our main conclusions are some basic estimates of the probabilities $F(\xxx,\vartheta)$ and $F(\vartheta, \xxx)$.

\medskip

\noindent Let $\{Z_n\}_{n=0}^\infty$ be a reversible random walk  on the pre-augmented tree $(X,{\mathfrak E})$. For $\xxx \in X \setminus \{\vartheta\}$, we define the {\it return ratio} at $\xxx$ by
\begin{equation} \label{eq4.1}
\lambda(\xxx) = \dfrac{P(\xxx,\xxx^-)}{\sum\limits_{\yyy:\yyy^- = \xxx}P(\xxx,\yyy)} = \dfrac{c(\xxx,\xxx^-)}{\sum\limits_{\yyy:\yyy^- = \xxx}c(\xxx,\yyy)}.
\end{equation}
For $\lambda >0$, we introduce the following condition on the transition probability $P$:

\medskip

\noindent  $(R_\lambda)$ \ ({\it Constant return ratio}) For any $\xxx \in X \setminus \{\vartheta\}$, $\lambda(\xxx) \equiv \lambda$ is a constant.

\medskip

\noindent For example, the SRW on the augmented tree of a homogeneous IFS $\{S_i\}_{i=1}^N$ satisfies  condition $(R_\lambda)$ with $\lambda = N^{-1}$.

\medskip

For a  fixed level $m \geq 1$, let $X_m := \bigcup_{k=0}^{m}\mathcal{J}_k$ and ${\mathfrak E}_m := {\mathfrak E}|_{X_m \times X_m}$.
Consider the following random walk $\{Z_n^{(m)}\}_{n=0}^{\infty}$ with transition probability
$P_m$ on the graph $(X_m, {\mathfrak E}_m)$: for $\xxx$, $\yyy \in X_m$,
\begin{equation*}
P_m(\xxx, \yyy) = \begin{cases}
P(\xxx,\yyy), \quad \ \ & \hbox{if } \xxx \sim \yyy
\text{ and } |\xxx|<m, \\
\quad 1, \quad \ \ & \text{if } \xxx = \yyy
\text{ and } |\xxx|=m, \\
\quad 0, \quad \ \ & \text{otherwise}.
\end{cases}
\end{equation*}
This is the restriction of $P$ on $X_m \setminus \mathcal{J}_m$, where the vertices in $\mathcal{J}_m$ are absorbing states of $P_m$. Let
$F_m(\xxx, \yyy)$ be the probability of ever visiting $\yyy$ from $\xxx$ in $X_m$, i.e.,
\begin{equation*}
F_m(\xxx, \yyy) := \mathbb{P} (\exists \ n \geq 0 \ \hbox{such that}
\ Z_n^{(m)} = \yyy  \mid  Z_0^{(m)}=\xxx).
\end{equation*}

\medskip

We begin by finding the value of $F_m(\xxx,\vartheta)$ for a reversible random walk with condition $(R_\lambda)$, which is the most basic identity to be used  in this section and Theorem \ref{th5.3}.

\medskip

\begin{proposition}  \label{th4.1}
Let $\{Z_n\}_{n=0}^\infty$ be a reversible random walk on a pre-augmented tree $(X,{\mathfrak E})$ that satisfies $(R_\lambda)$ for some $\lambda>0$. Then for $m \geq 1$ and $\xxx \in X_m$,
\begin{equation} \label{eq4.2}
F_m(\xxx, \vartheta) =
\begin{cases}
\ \dfrac{\lambda^{|\xxx|}-\lambda^m}{1-\lambda^m}, & \quad \hbox{if } \lambda \neq 1, \vspace{0.3cm} \\
\ \dfrac{m-|\xxx|}{m}, & \quad \hbox{if } \lambda = 1.
\end{cases}
\end{equation}
Consequently, $F(\xxx, \vartheta) = \lambda^{|\xxx|}$ if $0\leq \lambda <1$, and $=1$ if $\lambda \geq 1$.
\end{proposition}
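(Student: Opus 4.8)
The key observation is that the restricted walk $\{Z_n^{(m)}\}$ is itself a reversible random walk (with absorbing boundary $\mathcal{J}_m$), and that the function $\xxx \mapsto F_m(\xxx,\vartheta)$ is $P_m$-harmonic on the ``interior'' $X_{m-1} = \bigcup_{k=0}^{m-1}\mathcal{J}_k$ except at the two absorbing sets, with boundary values $F_m(\vartheta,\vartheta)=1$ and $F_m(\xxx,\vartheta)=0$ for $\xxx \in \mathcal{J}_m$. So the plan is to exhibit the right-hand side of \eqref{eq4.2} as a function with exactly these properties and invoke uniqueness of the solution to the Dirichlet problem on the finite graph $(X_m,\mathfrak{E}_m)$. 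The crucial point is that condition $(R_\lambda)$ forces the ansatz $g(\xxx) = a\lambda^{|\xxx|} + b$ (or $g(\xxx)=a|\xxx|+b$ when $\lambda=1$) to be harmonic: indeed, writing the harmonic equation at a vertex $\xxx$ with $0<|\xxx|<m$ as
$$
g(\xxx) = P(\xxx,\xxx^-)g(\xxx^-) + \sum_{\yyy:\yyy^-=\xxx}P(\xxx,\yyy)g(\yyy) + \sum_{\yyy:\yyy\sim_h\xxx}P(\xxx,\yyy)g(\yyy),
$$
one checks that a function depending only on $|\xxx|$ makes the horizontal terms contribute $g(\xxx)\sum_{\yyy\sim_h\xxx}P(\xxx,\yyy)$ (since $|\yyy|=|\xxx|$ for horizontal neighbors), so those cancel against part of the left side, and what remains is precisely the two-term recursion governed by the return ratio. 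Using $\lambda(\xxx)\equiv\lambda$, the vertical part reads (after normalizing) $g(\xxx)=\tfrac{\lambda}{1+\lambda}g(\xxx^-)+\tfrac{1}{1+\lambda}g(\xxx^+)$ in spirit — more precisely $\big(\sum_{\yyy^-=\xxx}P(\xxx,\yyy)\big)\big(\lambda g(\xxx^-) + g(\xxx^+_{\text{avg}})\big)$ where the downward average of $g$ equals $g$ evaluated at level $|\xxx|+1$ — and $g(\xxx)=a\lambda^{|\xxx|}+b$ satisfies it since $\lambda^{k} = \tfrac{\lambda}{1+\lambda}\lambda^{k-1}+\tfrac{1}{1+\lambda}\lambda^{k+1}$.

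Concretely I would proceed as follows. First, record that $F_m(\cdot,\vartheta)$ solves the discrete Dirichlet problem: it is $P_m$-harmonic at every $\xxx$ with $0<|\xxx|<m$ (first-step analysis), equals $1$ at $\vartheta$, and equals $0$ on $\mathcal{J}_m$. Since $(X_m,\mathfrak{E}_m)$ is a finite connected graph and every vertex can reach the absorbing set $\{\vartheta\}\cup\mathcal{J}_m$, the maximum principle gives uniqueness of such a solution. Second, define $g(\xxx)$ to be the right-hand side of \eqref{eq4.2}; by construction $g(\vartheta)=1$ and $g|_{\mathcal{J}_m}=0$, and by the computation above $g$ is $P_m$-harmonic on $\{0<|\xxx|<m\}$ — here one uses $(R_\lambda)$ together with the fact (immediate from $|\xxx|=|\yyy|$ for $\xxx\sim_h\yyy$ and $|\yyy|=|\xxx|+1$ for the children $\yyy$) that horizontal and downward contributions see $g$ as constant along each level. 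Third, conclude $F_m(\xxx,\vartheta)=g(\xxx)$ for all $\xxx\in X_m$. Finally, for the limiting statement: note $F_m(\xxx,\vartheta)$ is increasing in $m$ (a path to $\vartheta$ in $X_m$ is a path in $X_{m+1}$, and absorption at level $m$ only kills trajectories) with limit $F(\xxx,\vartheta)$; letting $m\to\infty$ in \eqref{eq4.2} gives $\lambda^{|\xxx|}$ when $0<\lambda<1$ and, after taking the appropriate limit, $1$ when $\lambda=1$, while for $\lambda>1$ one observes $\tfrac{\lambda^{|\xxx|}-\lambda^m}{1-\lambda^m}\to 1$ as well (divide numerator and denominator by $\lambda^m$).

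I expect the main obstacle to be the harmonicity verification for $g$ — specifically, bookkeeping the three types of neighbors (parent, children, horizontal) and confirming that condition $(R_\lambda)$ is exactly what makes the level-dependent ansatz close up, independently of how many horizontal edges $\xxx$ has or how the conductances are distributed among the children. The horizontal edges are the potential nuisance, but they are harmless precisely because a horizontal neighbor sits at the same level, so $g(\yyy)=g(\xxx)$ there; it is worth stating this cancellation explicitly. A secondary, minor point is justifying the monotonicity $F_m(\xxx,\vartheta)\uparrow F(\xxx,\vartheta)$ and the interchange of limit with the closed-form expression, but this is routine once the finite-level identity is in hand. Everything else — uniqueness via the maximum principle on a finite graph, first-step analysis for the harmonic equation — is standard.
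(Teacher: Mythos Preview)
Your argument is correct: the level-only ansatz $g(\xxx)=a\lambda^{|\xxx|}+b$ (or its linear analogue when $\lambda=1$) is $P_m$-harmonic on the interior precisely because horizontal neighbors sit at the same level and $(R_\lambda)$ collapses the vertical part to the one-step recursion $\lambda(h(k-1)-h(k))=h(k)-h(k+1)$; uniqueness via the maximum principle on the finite graph then pins down the constants from the boundary data, and the monotone limit $m\to\infty$ is straightforward.

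The paper takes a different, more probabilistic route. It introduces the stopping times $n_k$ at which the walk changes level and shows that the \emph{level process} $L_k=|Z_{n_k}|$ is itself a birth-and-death chain on $\{0,1,2,\ldots\}$ with $P_L(\ell,\ell-1)=\lambda/(1+\lambda)$ and $P_L(\ell,\ell+1)=1/(1+\lambda)$; then $F_m(\xxx,\vartheta)=\mathbb{P}(T_0<T_m\mid L_0=|\xxx|)$ is exactly the classical gambler's ruin probability, and \eqref{eq4.2} is read off from that. Your approach avoids constructing the auxiliary chain and instead verifies the closed-form answer directly against the Dirichlet problem, which is arguably more self-contained. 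The paper's reduction, on the other hand, makes the mechanism more transparent---the horizontal structure is quotiented out entirely, leaving a one-dimensional walk---and the level process could in principle be reused for other computations. Both arguments hinge on the same observation (horizontal moves are level-preserving, $(R_\lambda)$ makes the vertical bias uniform), just packaged differently.
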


\noindent {\it Remark.}  We thank Professor J. Kigami for informing us for the following proof which shortened the original one.

\begin{proof}
For $n \geq 0$, let $t(n) = \inf\{k \geq n: |Z_k| \neq |Z_n|\}$ be the first time that the random walk jumps to a different level from time $n$. Define a sequence of stopping times $\{n_k\}_{k=0}^\infty$ by letting $n_0=0$ and $n_k=t(n_{k-1})$ for $k \geq 1$. For $k \geq 0$, let $L_k=|Z_{n_k}|$ denote the level of the chain.
It can be checked directly that $\{L_k\}_{k=0}^\infty$ is a birth and death chain on the nonnegative integers with the following transition probabilities: $P_L(0,1)=1$, $P_L(\ell,\ell-1)=\lambda/(1+\lambda)$, and $P_L(\ell,\ell+1)=1/(1+\lambda)$ for $\ell \geq 1$. Let $T_\ell$ be the first time that $L_k$ visits level $\ell$, then the expression \eqref{eq4.2} of $F_m(\xxx, \vartheta)=\mathbb P(T_0<T_m \mid L_0=|\xxx|)$ for $\xxx \in X_m$ follows from standard calculations for birth and death chains.

 Observe that for $|\xxx|< m$,  $F_m(\xxx, \vartheta) \nearrow F(\xxx, \vartheta)$ as $m\to \infty$, the second part follows by taking limit of \eqref {eq4.2}.
\end{proof}

\medskip
%
%To illustrate Theorem \ref{th4.1}, we consider the SRW on the binary augmented tree for the unit interval. The values of $F_m(\xxx, \vartheta)$ are calculated for two steps (see Figure~\ref{fig:4}). It is interesting to see that on each level, the probability $F_m(\xxx, \vartheta)$ has the same value even though $\deg(\xxx)$ varies.
%\begin{figure}[ht]
%\begin{center}
%\includegraphics[width=39mm,height=30mm]{Fig4.eps}
%\caption{{\bf} $F_m(\xxx, \vartheta), m=3$, for SRW on the binary augmented tree} \label{fig:4}
%\end{center}
%\end{figure}

 Since our main interest is on transient random walks, we will assume $\lambda \in (0,1)$ throughout the paper. As a simple consequence of Proposition \ref{th4.1} we have the follow result on the Green function, which will be needed to consider $F_m(\vartheta, \xxx)$ and $F(\vartheta, \xxx)$.

\medskip

\begin{lemma} \label{th4.2} Let $\{Z_n\}_{n=0}^\infty$ be a reversible random walk on a pre-augmented tree $(X,{\mathfrak E})$, and $G_m(\cdot, \cdot)$ be the Green function of $\{Z_n^{(m)}\}_{n=0}^\infty$. If $\{Z_n\}_{n=0}^\infty$ satisfies condition $(R_\lambda)$ with $\lambda \in (0,1)$, then $G_m (\vartheta, \vartheta) = \frac{1-\lambda^m}{1-\lambda}$ and $G(\vartheta, \vartheta)= \frac 1{1-\lambda}$.
\end{lemma}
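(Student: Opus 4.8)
The plan is to compute the Green function at the root by reducing the walk to the one-dimensional birth-and-death chain $\{L_k\}$ on the levels that was already introduced in the proof of Proposition \ref{th4.1}. The key observation is that $G_m(\vartheta,\vartheta)$ counts the expected number of visits to $\vartheta$ by $\{Z_n^{(m)}\}$ before absorption at level $m$, and this quantity is unchanged if we only record the chain at the ``level-change'' times $n_k$: since the root $\vartheta$ is the unique vertex at level $0$, the chain $\{Z_n^{(m)}\}$ is at $\vartheta$ at time $n_k$ precisely when $L_k=0$, and between consecutive times $n_k$ the level is constant, so each ``visit to level $0$'' of the level chain corresponds to exactly one visit of $\{Z_n^{(m)}\}$ to $\vartheta$. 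Hence $G_m(\vartheta,\vartheta)$ equals the expected number of visits to $0$ by the birth-and-death chain $\{L_k\}$ started at $0$ before it hits level $m$.

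First I would recall from Proposition \ref{th4.1}'s proof that $\{L_k\}$ has transition probabilities $P_L(0,1)=1$, $P_L(\ell,\ell-1)=\lambda/(1+\lambda)$ and $P_L(\ell,\ell+1)=1/(1+\lambda)$ for $1\le \ell\le m-1$, with level $m$ absorbing. Let $q$ denote the probability that $\{L_k\}$, started at $1$, returns to $0$ before reaching $m$; by the standard gambler's-ruin computation for this biased walk, $q = F_m(\text{level }1,\text{level }0)=\frac{\lambda-\lambda^m}{1-\lambda^m}$, which is just \eqref{eq4.2} with $|\xxx|=1$. Since from $0$ the chain deterministically moves to $1$, the number of returns to $0$ is geometric: the expected number of visits to $0$ (counting the initial one) is $\sum_{j=0}^{\infty} q^j = \frac{1}{1-q}$. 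Substituting the value of $q$ gives
\[
G_m(\vartheta,\vartheta) = \frac{1}{1-q} = \frac{1-\lambda^m}{(1-\lambda^m)-(\lambda-\lambda^m)} = \frac{1-\lambda^m}{1-\lambda}.
\]
For the second assertion I would simply let $m\to\infty$: either invoke monotone convergence, since $G_m(\vartheta,\vartheta)\nearrow G(\vartheta,\vartheta)$ as the absorbing barrier recedes (each killed walk is dominated by the next), or observe directly that $q\nearrow \lambda$ and $1/(1-q)\nearrow 1/(1-\lambda)$. Either way $G(\vartheta,\vartheta)=\frac{1}{1-\lambda}$, which also re-confirms transience for $\lambda\in(0,1)$.

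The only point requiring a little care — and the main potential obstacle — is justifying that passing to the level chain neither loses nor double-counts visits to $\vartheta$; this is where the fact that level $0$ is a singleton $\{\vartheta\}$ is essential, together with the observation that $t(n_{k-1})$ is almost surely finite for the killed chain (the walk cannot linger forever at a fixed level inside the finite graph $X_m$ without being absorbed, since $r(P_m)$-type escape is immediate in finite time). Once that bookkeeping is pinned down, everything else is the elementary geometric-series and gambler's-ruin arithmetic already available from Proposition \ref{th4.1}.
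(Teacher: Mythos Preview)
Your proposal is correct and essentially coincides with the paper's argument. The paper writes $G_m(\vartheta,\vartheta)=\dfrac{1}{1-U_m(\vartheta,\vartheta)}$ and computes the return probability via the one-step formula $U_m(\vartheta,\vartheta)=\sum_{\xxx\in\mathcal J_1}P_m(\vartheta,\xxx)F_m(\xxx,\vartheta)=\dfrac{\lambda-\lambda^m}{1-\lambda^m}$ from Proposition~\ref{th4.1}; your quantity $q$ is exactly this $U_m(\vartheta,\vartheta)$, so the only difference is that you route the computation through the level chain (and must justify the visit-counting bijection), whereas the paper stays on the original chain and avoids that bookkeeping entirely.
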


\begin{proof} Note that
$
G_m(\vartheta, \vartheta) = \frac{1}{1-U_m(\vartheta, \vartheta)},
$
where $U_m(\vartheta, \vartheta) := {\mathbb P} (\exists\,n \geq 1 \hbox{ such that } Z^{(m)}_n = \vartheta \mid Z^{(m)}_0 = \vartheta)$,
the probability that the random walk returns to $\vartheta$ after
starting at the root $\vartheta$).  The lemma follows from this identity together with  the one-step formula $U_m(\vartheta, \vartheta) = \sum_{\xxx \in \mathcal{J}_1} P_m(\vartheta, \xxx) F_m(\xxx, \vartheta)$  and Proposition \ref{th4.1}.
\end{proof}

\medskip

\begin{proposition} \label{th4.3}
Let $\{Z_n\}_{n=0}^\infty$ be a reversible random walk on a pre-augmented tree $(X,{\mathfrak E})$ that satisfies condition $(R_\lambda)$. Then for $m \geq 1$ and $\xxx \in \mathcal{J}_m$,
\begin{equation} \label {eq4.5}
F_m (\vartheta, \xxx) = \frac{c(\xxx^-,\xxx)\lambda^{m-1}}{m(\vartheta)}\ .
\end{equation}
\end{proposition}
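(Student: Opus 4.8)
The plan is to peel off the last step of the walk, reduce $F_m(\vartheta,\xxx)$ to a Green function $G_m(\vartheta,\xxx^-)$ of the absorbed chain $P_m$, and then use reversibility together with Proposition~\ref{th4.1} and Lemma~\ref{th4.2} to evaluate that Green function. Throughout, $\lambda\in(0,1)$ is in force, as stipulated after Proposition~\ref{th4.1}.

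\emph{Step 1: a last-step decomposition.} Since $\xxx\in\mathcal J_m$ and every vertex of $\mathcal J_m$ is absorbing for $P_m$, the walk can occupy $\xxx$ only by entering it from its parent $\xxx^-$: the vertical edge from level $m-1$ is the only edge into $\xxx$ that issues from a non-absorbing vertex (a horizontal edge within $\mathcal J_m$ is never used, as it leaves an absorbing state). Moreover, once the walk reaches $\xxx$ it stays there, so the transition $\xxx^-\to\xxx$ occurs at most once along a trajectory, and it occurs exactly when $\xxx$ is ever visited. Hence, by linearity of expectation and the Markov property (and $P_m(\xxx^-,\xxx)=P(\xxx^-,\xxx)$ since $|\xxx^-|<m$),
$$
F_m(\vartheta,\xxx)=\mathbb E_\vartheta\Big[\sum_{n\ge 0}\mathbf{1}_{\{Z_n^{(m)}=\xxx^-,\ Z_{n+1}^{(m)}=\xxx\}}\Big]=P(\xxx^-,\xxx)\sum_{n\ge 0}\mathbb P_\vartheta(Z_n^{(m)}=\xxx^-)=\frac{c(\xxx^-,\xxx)}{m(\xxx^-)}\,G_m(\vartheta,\xxx^-).
$$

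\emph{Step 2: reversibility.} Before absorption the chain $P_m$ runs exactly like the original reversible walk $P$ killed on hitting $\mathcal J_m$, and since $\vartheta,\xxx^-\in X_{m-1}$ the quantity $G_m(\vartheta,\xxx^-)$ equals the Green function of this killed (sub-Markov) walk. The killed walk is $m$-symmetric, $m(x)P_m(x,y)=c(x,y)=m(y)P_m(y,x)$ for $x,y\in X_{m-1}$, and $m$-symmetry propagates to all powers of the kernel by path reversal, hence to the Green function; thus $m(\vartheta)G_m(\vartheta,\xxx^-)=m(\xxx^-)G_m(\xxx^-,\vartheta)$. Substituting into Step~1 gives $F_m(\vartheta,\xxx)=\dfrac{c(\xxx^-,\xxx)}{m(\vartheta)}\,G_m(\xxx^-,\vartheta)$.

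\emph{Step 3: evaluation, and the main obstacle.} Using the general Markov-chain identity $G_m(\xxx^-,\vartheta)=F_m(\xxx^-,\vartheta)\,G_m(\vartheta,\vartheta)$, Proposition~\ref{th4.1} (with $|\xxx^-|=m-1$) gives $F_m(\xxx^-,\vartheta)=(\lambda^{m-1}-\lambda^m)/(1-\lambda^m)$ and Lemma~\ref{th4.2} gives $G_m(\vartheta,\vartheta)=(1-\lambda^m)/(1-\lambda)$, whose product is $\lambda^{m-1}$; plugging this back yields \eqref{eq4.5}. The only step that needs real care is Step~1: one must verify that the absorbing structure on $\mathcal J_m$ genuinely forces every visit to $\xxx$ to arrive through the single vertical edge from $\xxx^-$, so that the count of $\xxx^-\to\xxx$ transitions equals the indicator that $\xxx$ is visited. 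Steps~2 and~3 are then a routine combination of reversibility with the formulas already proved.
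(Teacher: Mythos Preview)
Your proof is correct and follows essentially the same route as the paper: the paper's one-line argument writes $F_m(\vartheta,\xxx)=G_m(\vartheta,\xxx^-)P(\xxx^-,\xxx)$, applies reversibility $m(\vartheta)G_m(\vartheta,\xxx^-)=m(\xxx^-)G_m(\xxx^-,\vartheta)$, and then evaluates via Proposition~\ref{th4.1} and Lemma~\ref{th4.2}, which is exactly your Steps~1--3 with more detail supplied (in particular your justification of the last-step decomposition).
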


\begin{proof}
For $\xxx \in {\mathcal J}_m$, using the reversibility $m(\vartheta)G_m(\vartheta,\xxx^-) = m(\xxx^-)G_m(\xxx^-,\vartheta)$, together with Proposition \ref{th4.1} and Lemma \ref{th4.2}, we can evaluate $F_m (\vartheta, \xxx) = G_m(\vartheta,\xxx^-)P(\xxx^-,\xxx)$ directly to get \eqref{eq4.5}.
\end{proof}

\medskip

In view of the expression of $F_m(\vartheta, \xxx)$ in \eqref{eq4.5}, we will introduce a class of conductance $c(x,y)$ so that the limits exist as $m \to \infty$.  (We can not take the limit directly, as $\xxx \in {\mathcal J}_m$ depends on $m$.)

\medskip

\begin{definition} \label{th4.4}
 Let $(X,{\mathfrak E})$ be a pre-augmented tree of an IFS $\{S_i\}_{j=1}^N$. A reversible random walk $\{Z_n\}_{n=0}^\infty$ on $(X,{\mathfrak E})$ with conductance $c:X \times X \rightarrow [0,\infty)$ is called {\it quasi-natural} with return ratio $\lambda \in (0,1)$ ($\lambda$-QNRW) if for a set of probability weights $\{p_i\}_{i=1}^N$,
\begin{enumerate}[(i)]
\item
$c(\xxx,\xxx^-) = p_\xxx\lambda^{-m}$, for $\xxx \in \mathcal J_m$, $m \geq 1$, where $p_\xxx=p_{i_1} \cdots p_{i_k}$ if $\xxx = i_1 \cdots i_k$,

\item
$c(\xxx, \yyy) \asymp c(\xxx,\xxx^-)$ for  $\yyy \sim_h \xxx \in \mathcal J_m$, $m \geq 1$, and
\item
$c(\xxx, \yyy) =0 $ if \  $\yyy \not \sim  \xxx $.
\end{enumerate}
(Here the bounds of $\asymp$ are independent of  $\mathcal J_m$, $m\geq 1$).
Furthermore, if the IFS satisfies the OSC and $p_i = r_i^\alpha$, the natural weights of the IFS, we call $\{Z_n\}_{n=0}^\infty$ a {\it natural random walk} with return ratio $\lambda$ ($\lambda$-NRW) on $(X, {\mathfrak E})$.
\end{definition}

\medskip

For a homogeneous IFS, using $\Sigma^n = {\mathcal J}_n$, it is direct to check from $P(\xxx, \yyy) = \frac {c(\xxx,\yyy)}{m(\xxx)}, \ \xxx\sim \yyy$ that
$P(\xxx, \xxx^-) = C_\xxx \lambda$, and $P(\xxx, \xxx i) = C_\xxx p_i$ for some $C_\xxx>0$. Clearly the walk has constant return ratio $\lambda$. More generally, we have

\medskip

\begin{proposition} \label{th4.5}
Let $\{Z_n\}_{n=0}^\infty$  be a $\lambda$-QNRW on the pre-augmented tree $(X, {\mathfrak E})$. Then $\{Z_n\}_{n=0}^\infty$ satisfies condition $(R_\lambda)$. Also $m(\vartheta) = \lambda^{-1}$, and if $(X, {\mathfrak E})$ is of bounded degree, then
\begin{equation} \label{eq4.6}
m(\xxx) \asymp  c(\xxx, \xxx^{-}), \qquad \xxx \in X.
\end{equation}
\end{proposition}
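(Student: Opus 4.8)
The plan is to reduce everything to a single combinatorial identity about the modified symbolic space and then do routine bookkeeping with Definition~\ref{th4.4}. Write $\mathcal{C}(\xxx) = \{\www \in X : \www^- = \xxx\}$ for the set of children of $\xxx$ in $X$. The identity I want is
$$
\sum_{\www \in \mathcal{C}(\xxx)} p_\www = p_\xxx \qquad \text{for every } \xxx \in X .
$$
I would prove this exactly as in the proof of Lemma~\ref{th3.0}. Writing $n = |\xxx|$, a child $\www \in \mathcal{C}(\xxx)$ is a word $\www = \xxx j_1 \cdots j_s$ with $s \ge 1$ and $r_{\xxx j_1 \cdots j_s} \le r^{n+1} < r_{\xxx j_1 \cdots j_{s-1}}$, and then $\www \in \mathcal{J}_{n+1}$ with $\www^- = \xxx$. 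Since the ratios of the successive prefixes of any $\omega$ in the cylinder $[\xxx] \subset \Sigma^\infty$ decrease strictly to $0$ starting from $r_\xxx$, and since $r_\xxx > r^{n+1}$ whenever $\xxx \in \mathcal{J}_n$ (if $\xxx = i_1 \cdots i_k$ then $r_{i_1 \cdots i_{k-1}} > r^n$ by definition and $r_{i_k} \ge r$, so $r_\xxx > r^{n+1}$), each such $\omega$ has exactly one prefix in $\mathcal{C}(\xxx)$. Hence the cylinders $[\www]$, $\www \in \mathcal{C}(\xxx)$, partition $[\xxx]$, and evaluating the Bernoulli product measure on $\Sigma^\infty$ determined by $\{p_i\}_{i=1}^N$ (for which $[\www]$ has measure $p_\www$, since $\sum_i p_i = 1$) gives the identity.

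Given this, $(R_\lambda)$ is immediate. For $\xxx \in \mathcal{J}_m$, $m \ge 1$, Definition~\ref{th4.4}(i) gives the numerator $c(\xxx, \xxx^-) = p_\xxx \lambda^{-m}$ of $\lambda(\xxx)$ in \eqref{eq4.1}, while the denominator is
$$
\sum_{\www \in \mathcal{C}(\xxx)} c(\xxx, \www)
= \sum_{\www \in \mathcal{C}(\xxx)} c(\www, \www^-)
= \lambda^{-(m+1)} \sum_{\www \in \mathcal{C}(\xxx)} p_\www
= \lambda^{-(m+1)} p_\xxx ,
$$
so $\lambda(\xxx) = \lambda$. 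The same computation at $\xxx = \vartheta$, whose only neighbours are its children $\mathcal{C}(\vartheta) = \mathcal{J}_1$ (there is no parent, and, being alone on level $0$, no horizontal edge), gives $m(\vartheta) = \lambda^{-1} \sum_{\www \in \mathcal{J}_1} p_\www = \lambda^{-1} p_\vartheta = \lambda^{-1}$.

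For the last assertion, fix $\xxx \in X \setminus \{\vartheta\}$ and split
$$
m(\xxx) = c(\xxx, \xxx^-) + \sum_{\yyy \sim_h \xxx} c(\xxx, \yyy) + \sum_{\www \in \mathcal{C}(\xxx)} c(\xxx, \www) .
$$
Since $\lambda^{-(m+1)} p_\xxx = \lambda^{-1} c(\xxx, \xxx^-)$, the computation above shows the last sum equals $\lambda^{-1} c(\xxx, \xxx^-)$; by Definition~\ref{th4.4}(ii) each term of the middle sum is comparable to $c(\xxx, \xxx^-)$ with constants independent of $\xxx$, and bounded degree bounds the number of horizontal neighbours of $\xxx$ by a constant; hence $c(\xxx, \xxx^-) \le m(\xxx) \le C \, c(\xxx, \xxx^-)$ for a constant $C$ depending only on $\lambda$, the degree bound, and the comparison constants of Definition~\ref{th4.4}(ii). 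This is \eqref{eq4.6} for $\xxx \ne \vartheta$, and the case $\xxx = \vartheta$ is already covered by $m(\vartheta) = \lambda^{-1}$. The only ingredient beyond formal manipulation is the cut-set identity of the first paragraph, and it is a routine consequence of how the levels $\mathcal{J}_n$ are defined; so I do not expect a real obstacle, though one must keep track of the fact that in the non-homogeneous case a child of $\xxx$ in $X$ need not be $\xxx i$ for a single letter $i$.
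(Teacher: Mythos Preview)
Your proof is correct and follows essentially the same approach as the paper's own proof; the paper's version is simply terser, writing $\lambda(\xxx) = \lambda p_\xxx / \sum_{\yyy:\yyy^-=\xxx} p_\yyy = \lambda$ and leaving the cut-set identity $\sum_{\yyy:\yyy^-=\xxx} p_\yyy = p_\xxx$ implicit, whereas you spell it out via the partition of the cylinder $[\xxx]$ by its children. Your treatment of $m(\xxx) \asymp c(\xxx,\xxx^-)$ via the three-part decomposition is likewise just an explicit version of what the paper calls ``straightforward by the bounded degree assumption.''
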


\begin{proof}
For $\xxx \in X \setminus \{\vartheta\}$, by \eqref{eq4.1}, we have $
\lambda(\xxx)  = {\lambda p_\xxx}/ ({\sum_{\yyy:\yyy^- =\xxx}p_\yyy}) = \lambda.
$
Hence the walk satisfies condition $(R_\lambda)$.  Also,  $m(\vartheta) = \sum_{\yyy \in \mathcal J_1}p_\yyy \lambda^{-1} =\lambda^{-1}$, and \eqref{eq4.6} is straightforward by the bounded degree assumption.
\end{proof}

\medskip

We now apply the previous results to make the following conclusion.

\medskip

\begin{theorem} \label{th4.6}
Let $\{Z_n\}_{n=0}^\infty$ be a $\lambda$-QNRW on a pre-augmented tree $(X, {\mathfrak E})$ with bounded degree. Then
\begin{equation} \label{eq4.7}
F_m(\vartheta, \xxx)  = p_\xxx ,  \qquad \xxx \in {\mathcal J}_m,
\end{equation}
Furthermore,  $F(\vartheta, \xxx) \asymp p_\xxx$ (the bounds depend on $\lambda$) for any $\xxx \in X$.
\end{theorem}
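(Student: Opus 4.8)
The first identity \eqref{eq4.7} is essentially immediate from Proposition~\ref{th4.3} together with the bookkeeping supplied by Proposition~\ref{th4.5}. The plan is to apply \eqref{eq4.5}, which gives $F_m(\vartheta,\xxx)=c(\xxx^-,\xxx)\lambda^{m-1}/m(\vartheta)$ for $\xxx\in\mathcal J_m$; by Definition~\ref{th4.4}(i) we have $c(\xxx,\xxx^-)=p_\xxx\lambda^{-m}$, and by Proposition~\ref{th4.5} we have $m(\vartheta)=\lambda^{-1}$. Substituting these two values into \eqref{eq4.5} the powers of $\lambda$ cancel exactly, leaving $F_m(\vartheta,\xxx)=p_\xxx$. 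Note that this step uses only the vertical-edge conductance normalization, not the $\asymp$ in part (ii) of the definition, and does not even require bounded degree.

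For the second assertion I would pass to the limit $m\to\infty$, using the standard monotonicity $F_m(\vartheta,\xxx)\nearrow F(\vartheta,\xxx)$ (the same limiting argument already invoked at the end of the proof of Proposition~\ref{th4.1}). The subtlety flagged parenthetically after Proposition~\ref{th4.3} is that a given $\xxx$ lies in a fixed level $\mathcal J_{|\xxx|}$, whereas the clean identity \eqref{eq4.7} is stated for vertices in the \emph{last} level $\mathcal J_m$ of the truncated graph; so one cannot simply read off $F(\vartheta,\xxx)$ from \eqref{eq4.7}. The remedy is to fix $\xxx$ with $|\xxx|=k$ and, for $m>k$, decompose a path from $\vartheta$ that ever reaches $\xxx$ according to the first time it hits level $k$: this gives
\[
F_m(\vartheta,\xxx)=\sum_{\yyy\in\mathcal J_k} F_m^{(k)}(\vartheta,\yyy)\,F_m(\yyy,\xxx),
\]
where $F_m^{(k)}$ denotes first-passage to level $k$ inside $X_m$. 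One then needs upper and lower bounds: from below, restricting the chain to stay within level $k$ until it reaches $\xxx$, a strong-isoperimetric / bounded-degree argument (or a direct comparison with the horizontal conductances, which by Definition~\ref{th4.4}(ii) are all $\asymp c(\yyy,\yyy^-)\asymp p_\yyy\lambda^{-k}$, hence comparable among the boundedly many neighbours of $\xxx$ at level $k$) shows $F_m(\yyy,\xxx)\geq c>0$ uniformly in $m$ for each $\yyy$ adjacent to $\xxx$ in level $k$, while $\sum_{\yyy\in\mathcal J_k}F_m^{(k)}(\vartheta,\yyy)=1$; combined with $F_m^{(k)}(\vartheta,\yyy)\le F_m(\vartheta,\yyy)=p_\yyy$ (by applying \eqref{eq4.7} to the truncation at level $k$) this pins $F_m(\vartheta,\xxx)$ between two constant multiples of $p_\xxx$, with constants depending only on $\lambda$ and the degree bound. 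Letting $m\to\infty$ yields $F(\vartheta,\xxx)\asymp p_\xxx$.

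The main obstacle is precisely this horizontal-spreading estimate: controlling how the hitting probability $F(\vartheta,\yyy)$ is redistributed among the vertices of level $k$ that are horizontally connected to $\xxx$, and showing the redistribution factor is bounded above and below by constants independent of the level. This is where bounded degree (Proposition~\ref{th3.7}, equivalently the OSC) and the two-sided bound in Definition~\ref{th4.4}(ii) are genuinely used — a single horizontal step from $\yyy$ to $\xxx$ has probability $c(\yyy,\xxx)/m(\yyy)\asymp c(\yyy,\yyy^-)/m(\yyy)\asymp 1$ by \eqref{eq4.6}, and there are at most $\deg$-many such $\yyy$, so the sum over $\mathcal J_k$ collapses to a bounded number of comparable terms. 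Everything else is the routine birth-and-death bookkeeping already set up in this section.
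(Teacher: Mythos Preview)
Your derivation of \eqref{eq4.7} is correct and identical to the paper's: plug $c(\xxx,\xxx^-)=p_\xxx\lambda^{-m}$ and $m(\vartheta)=\lambda^{-1}$ into \eqref{eq4.5}. The lower bound $F(\vartheta,\xxx)\geq p_\xxx$ also follows immediately, as in the paper, from $F(\vartheta,\xxx)\geq F_{|\xxx|}(\vartheta,\xxx)=p_\xxx$.

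The upper bound, however, has a genuine gap. Your decomposition gives (with $k=|\xxx|$)
\[
F_m(\vartheta,\xxx)=\sum_{\yyy\in\mathcal J_k} p_\yyy\, F_m(\yyy,\xxx),
\]
and you then assert that this sum ``collapses to a bounded number of comparable terms'' coming from the horizontal neighbours of $\xxx$. But you give no reason why the contribution from $\yyy$ \emph{not} adjacent to $\xxx$ is negligible. The quantity $F_m(\yyy,\xxx)$ is not small for such $\yyy$: the chain can move up toward $\vartheta$ and descend again to $\xxx$, and indeed $F_m(\yyy,\xxx)\geq F_m(\yyy,\vartheta)F_m(\vartheta,\xxx)$, which involves the very quantity you are trying to bound. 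Bounded degree and condition (ii) of Definition~\ref{th4.4} control a single horizontal step, but they do not by themselves force the far-away terms in the sum to be small; that would require a decay estimate for $F_m(\yyy,\xxx)$ in the horizontal distance, which is essentially of Ancona type and is not available at this point of the argument.

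The paper avoids this difficulty entirely by using reversibility of the Green function. From $m(\vartheta)G(\vartheta,\xxx)=m(\xxx)G(\xxx,\vartheta)$ one gets
\[
F(\vartheta,\xxx)\leq G(\vartheta,\xxx)=\frac{m(\xxx)}{m(\vartheta)}F(\xxx,\vartheta)G(\vartheta,\vartheta),
\]
and each factor on the right is already known exactly or up to constants: $F(\xxx,\vartheta)=\lambda^{|\xxx|}$ (Proposition~\ref{th4.1}), $G(\vartheta,\vartheta)=(1-\lambda)^{-1}$ (Lemma~\ref{th4.2}), and $m(\xxx)\asymp p_\xxx\lambda^{-|\xxx|}$ (Proposition~\ref{th4.5}). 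This yields $F(\vartheta,\xxx)\leq C(1-\lambda)^{-1}p_\xxx$ in one line. The key idea you are missing is this swap of direction via reversibility, which trades the unknown $F(\vartheta,\xxx)$ for the already-computed $F(\xxx,\vartheta)$.
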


\begin{proof} For $m\geq 1$ and for $\xxx \in {\mathcal J}_m$, Propositions \ref {th4.3} and \ref{th4.5}  imply that
$$
F_m(\vartheta, \xxx) = \frac {c(\xxx, \xxx^-)\lambda^{m-1}}{m (\vartheta)} = \frac {p_\xxx \lambda^{-m}\cdot \lambda^{m-1}}{ \lambda^{-1}} = p_\xxx .
$$
Note that we cannot directly send $m \to \infty$ as $\xxx \in {\mathcal J}_m$ depends on $m$. Instead,  we observe that
$$
F(\vartheta, \xxx) \geq F_m(\vartheta, \xxx) = p_\xxx.
$$
Also, note that $P$ is reversible, and hence
$m(\xxx)G(\xxx, \vartheta) = m(\vartheta)G(\vartheta, \xxx)$ for any vertex $\xxx \in X$.
Hence, for $\xxx \in X \setminus \{\vartheta\}$, by Proposition \ref{th4.1}, Lemma \ref{th4.2} and  Proposition  \ref{th4.5},  we have
\begin{align*} \label{eqM1}
F(\vartheta, \xxx) & = \dfrac{G(\vartheta, \xxx)}{G(\xxx, \xxx)} \ \leq\  G(\vartheta, \xxx) \hspace{1.2cm} (\hbox{by } G(\xxx,\xxx) \geq 1) \\
&= \dfrac{m(\xxx)}{m(\vartheta)}G(\xxx, \vartheta) \ \leq \ C_1 p_\xxx\lambda^{-|\xxx|} F(\xxx, \vartheta)G(\vartheta, \vartheta) \\
&= C_1 p_\xxx \lambda^{-|\xxx|} \cdot \Big(\lambda^{|\xxx|}\cdot \frac{1}{1-\lambda}\Big)
\ =\ \frac {C_1}{1-\lambda} \ p_\xxx.
\end{align*}
This shows that $F(\vartheta, \xxx ) \asymp p_\xxx$.
\end{proof}

\medskip

\noindent {\it Remark 1}. In general, we cannot expect a $\lambda$-QNRW to have the {\it strict reversibility} property (i.e., $0< M^{-1} \leq c(\xxx, \yyy)\leq M$ for any $\xxx \sim \yyy$). More precisely, we can show that if the IFS satisfies the OSC, then strict reversibility of $\lambda$-QNRW implies that  $\lambda =r^\alpha$, where $r = \min_{1\leq i\leq N} r_i$.
\vspace {0.1cm}

Indeed, suppose strict reversibility holds, and let $c(\mathcal J_m) = \sum_{\xxx \in \mathcal J_{m}} c(\xxx,\xxx^-)$, $m \geq 0$. Then by \eqref{eq4.1}, we have $c( {\mathcal J}_{m+1}) = \lambda^{-m}c(\mathcal J_1) = \lambda^{-m}m(\vartheta)$. Recall that $r^{-\alpha m}\leq \#{\mathcal J}_m < r^{-\alpha(m+1)}$ where $\alpha$ satisfies $\sum_{i=1}^N r_i^\alpha =1$ (Lemma \ref{th3.0}). Hence
$$
0 < M^{-1} \leq \dfrac{c(\mathcal J_{m+1})}{\# \mathcal J_{m+1}} \leq \dfrac{\lambda^{-m}m(\vartheta)}{r^{-\alpha (m+1)}}, \qquad  m\geq 0.
$$
This implies $\lambda \leq r^\alpha$. On the other hand,
$$
M\geq \dfrac{c(\mathcal J_{m+1})}{\# \mathcal J_{m+1}} \geq \dfrac{\lambda^{-m}m(\vartheta)}{r^{-\alpha (m+2)}}, \qquad  m\geq 0
$$
 yields $\lambda \geq r^\alpha$, and hence $\lambda = r^\alpha$. It follows that  in Definition \ref {th4.4}(i), we must have $p_i = r_i^\alpha$, the natural weight to have the strict reversibility property.

\bigskip

\noindent {\it Remark 2}. Conditions (i) and (ii) of the $\lambda$-QNRW imply that $c(\xxx,\xxx^-) \asymp c(\xxx, \yyy) \asymp c(\yyy,\yyy^-)$ for $\xxx \sim_h \yyy $. It follows that $p_\xxx \asymp p_\yyy$ for all $\xxx \sim_h \yyy$, which is a strong restriction on the choice of possible probability weights $\{p_i\}_{i=1}^N$.  For the $\lambda$-NRW this restriction is fulfilled,  as the natural weights satisfy $p_\xxx=r_\xxx^\alpha \asymp r^{\alpha|\xxx|}$ for all $\xxx \in X$. In particular, for a homogeneous IFS that satisfies the OSC and has contraction ratio $r$,  the SRW on a pre-augmented tree $(X, {\mathfrak E})$ is a $\frac 1N$-NRW, as  $p_i = 1/N = r^\alpha$, $\lambda = N^{-1}$ and $c(\xxx, \xxx^-)=1$.

\vspace{1mm}

However, for the $\lambda$-QNRW, we have to choose special probability weights so that $p_\xxx \asymp p_\yyy$, $\xxx \sim_h \yyy$. For instance, if $K$ is the $d$-dimensional Sierpi\'{n}ski graph, one can derive from $p_\xxx \asymp p_\yyy$, $\xxx \sim_h \yyy$ that all $p_i$'s are equal. Hence the $\lambda$-QNRW on the corresponding augmented tree must be the $\lambda$-NRW.

\medskip

\begin{example} {\rm If $K$ is the Sierpi\'{n}ski carpet, and the IFS is $\{S_i\}_{i=1}^8$ on $\mathbb{R}^2$, where $S_i(z)=(z+q_i)/2$, $i=1,2,\ldots,8$, and the $q_i$'s are  the four vertices and four mid-points of the edges of a square, and is labeled clockwise starting from the top left corner. Consider the pre-augmented tree $(X, {\mathfrak E})$ with ${\mathfrak E}_h$ defined by $\xxx \sim_h\yyy$ if $ |\xxx| = |\yyy|$ and ${\rm dim_H}\big (K_\xxx \cap K_\yyy \big ) =1$.  Then it is direct to check that one obtains a $\lambda$-QNRW with probability weight $\{p_i\}_{i=1}^8$ on $(X, {\mathfrak E})$ if and only if $p_1=p_3=p_5=p_7$, $p_2=p_6$, and $p_4=p_8$. Note that with this weight, the self-similar measure is a {\it doubling measure }\cite{Y}, i.e, there exists a constant $C>0$ such that for any $\xi \in K$ and any $\delta > 0$, we have $\mu(B(\xi;2\delta)) \leq C \mu(B(\xi;\delta))$. }
\end{example}

 \medskip

 In general, we can make use of a result in \cite{Y} to characterize the probability weights $\{p_i\}_{i=1}^N$ that admit a $\lambda$-QNRW.

\medskip

\begin{theorem} \label{th4.4'}
Let $\{S_i\}_{i=1}^N$ be an IFS with OSC, $K$ be the self-similar set, and $\mu$ be the self-similar measure generated by $\{p_i\}_{i=1}^N$.  Then $\{p_i\}_{i=1}^N$ admits a $\lambda$-QNRW on the augmented tree $(X,\ed)$ if and only if $\mu$ is a doubling measure on $K$.
\end{theorem}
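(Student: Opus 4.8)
The plan is to reduce the existence of a $\lambda$-QNRW to a single scale-invariant comparability condition on the weights, and then to identify that condition with the doubling property by invoking the cited result of \cite{Y}. Write $(\ast)$ for the condition that there is a constant $C\ge 1$ with $C^{-1}\le p_\xxx/p_\yyy\le C$ for \emph{all} $\xxx\sim_h\yyy$ in $(X,\ed)$ (across all levels). The first step is to show that $\{p_i\}_{i=1}^N$ admits a $\lambda$-QNRW if and only if $(\ast)$ holds, independently of which $\lambda\in(0,1)$ one fixes, since $\lambda$ cancels below. The ``only if'' direction is immediate: if $c(\cdot,\cdot)$ realizes a $\lambda$-QNRW and $\xxx\sim_h\yyy$ with $|\xxx|=|\yyy|=m$, then by conditions (i) and (ii) of Definition \ref{th4.4} we have $c(\xxx,\yyy)\asymp c(\xxx,\xxx^-)=p_\xxx\lambda^{-m}$, while by symmetry of $c$ and the same conditions applied to $\yyy$ we have $c(\xxx,\yyy)=c(\yyy,\xxx)\asymp c(\yyy,\yyy^-)=p_\yyy\lambda^{-m}$; hence $p_\xxx\asymp p_\yyy$ with constants that are precisely the level-independent $\asymp$-constants of (ii), which is $(\ast)$.

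For the ``if'' direction, assume $(\ast)$ and fix any $\lambda\in(0,1)$. Set $c(\xxx,\xxx^-):=p_\xxx\lambda^{-|\xxx|}$ for $\xxx\neq\vartheta$, set $c(\xxx,\yyy):=\lambda^{-m}\sqrt{p_\xxx p_\yyy}=\sqrt{c(\xxx,\xxx^-)c(\yyy,\yyy^-)}$ for $\xxx\sim_h\yyy$ at level $m$ (a positive symmetric quantity), and $c(\uuu,\vvv):=0$ for $\uuu\not\sim\vvv$. Conditions (i) and (iii) of Definition \ref{th4.4} hold by construction, and $c(\xxx,\yyy)/c(\xxx,\xxx^-)=\sqrt{p_\yyy/p_\xxx}\in[C^{-1/2},C^{1/2}]$ by $(\ast)$, giving (ii) with level-independent constants. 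Since the OSC is assumed, $(X,\ed)$ has bounded degree (Proposition \ref{th3.7}), so the conductance is locally finite and defines a transient reversible random walk satisfying $(R_\lambda)$ (Proposition \ref{th4.5} and Lemma \ref{th4.2}); thus it is a genuine $\lambda$-QNRW, and the reduction ``$\lambda$-QNRW-admissible $\iff(\ast)$'' is complete.

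It remains to match $(\ast)$ with doubling of $\mu$. Under the OSC one has $p_\xxx=\mu(S_\xxx(K))$ for every $\xxx\in X$ (as in Lemma \ref{th3.0}), and for $\xxx\in\mathcal{J}_n$ the cell satisfies $\diam S_\xxx(K)=r_\xxx\diam K\asymp r^n$; moreover, by \eqref{eq3.3}, $\xxx\sim_h\yyy$ means precisely that $S_\xxx(K)$ and $S_\yyy(K)$ are two level-$n$ cells within distance $\gamma r^n$ of each other -- neighboring cells of comparable size. Hence $(\ast)$ is exactly the assertion that neighboring cells of comparable size carry comparable $\mu$-mass, uniformly over all scales. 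The result of \cite{Y} characterizes doubling self-similar measures (under the OSC) by precisely this property; after rephrasing its statement for the grading of $X$ by the sets $\mathcal{J}_n$ (legitimate because $r_\xxx\asymp r^n$ on $\mathcal{J}_n$) and observing that replacing the $\gamma$-closeness relation of \eqref{eq3.3} by the weaker relation ``$\overline{S_\xxx(K)}\cap\overline{S_\yyy(K)}\neq\emptyset$'' alters the comparability constants only through chains of uniformly bounded length, one obtains $(\ast)\iff\mu$ doubling. Together with the previous two paragraphs, this proves the theorem.

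The main obstacle is precisely this last equivalence $(\ast)\iff\mu$ doubling, which is where \cite{Y}, rather than elementary bookkeeping, carries the load. The delicate issue is that doubling is a condition on Euclidean balls in the continuum while $(\ast)$ lives on the cells, so both directions use the OSC geometry: passing from balls to cells requires that every set $B(\xi;\delta)\cap K$ with $\delta\asymp r^n$ is covered by boundedly many level-$n$ cells joined by short $\sim_h$-chains, so that mass comparability propagates; passing from cells to balls requires a point deep inside a cell $S_\xxx(K)$ about which a ball of radius $\asymp r^n$ meets no other level-$n$ cell (an interior-cell estimate supplied by the OSC, e.g.\ through Schief's theorem). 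By contrast the reduction of QNRW-admissibility to $(\ast)$ -- the geometric-mean construction and the symmetry argument -- is routine.
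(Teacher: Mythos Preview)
Your approach is essentially the same as the paper's: both reduce the existence of a $\lambda$-QNRW to the comparability condition $p_\xxx\asymp p_\yyy$ for $\xxx\sim_h\yyy$, use the geometric-mean construction $c(\xxx,\yyy)=\sqrt{c(\xxx,\xxx^-)c(\yyy,\yyy^-)}$ to build the conductance, and invoke \cite{Y} to link this condition to doubling. The paper is more explicit at the last step---it formulates Yung's criterion as two quantified conditions (i), (ii) on cell inclusions $K_\www\subset B(K_\vvv;C_1r_\vvv)$ and proves the chain (ii) $\Rightarrow$ QNRW $\Rightarrow$ (i) with a short case analysis---whereas you outsource this bridging to a sketch; but the logical skeleton and the constructions coincide.
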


\begin{proof} Let $K_\xxx = S_\xxx(K)$. For  $F\subset {\mathbb R}^d$, let $|F|= \diam (F)$, and $B(F;\delta):=\{\xi \in \mathbb{R}^d : \ {\rm dist}(\xi,F) \leq \delta\}$ be the closed $\delta$-neighborhood of $F$. Consider the following two conditions:

\vspace {0.1cm}

(i) if there exist constants $C_1$, $C_2>0$ such that for any  $\www$, $\vvv \in \Sigma^*\setminus \{\vartheta\}$ that satisfy $K_\www \subset B(K_\vvv;\ C_1 r_\vvv)$, then $p_\www \leq C_2 p_\vvv$;

\vspace {0.1cm}

(ii) Replace the quantifiers of (i) to ``if for any $C_1 >0$, there exists $C_2>0$ such that $\ldots$".

\vspace{0.1cm}

\noindent In \cite[Theorem 2.3]{Y}, it was proved that (i) $\Rightarrow $ $\mu$ is a doubling measure $\Rightarrow $ (ii). We will prove  (ii) $\Rightarrow $  $\{p_i\}_{i=1}^N$ admits a QNRW $\Rightarrow $ (i). Hence all four conditions are equivalent, proving the theorem.

\vspace{0.1cm}

Assume (ii) and let $C_1=(\gamma+|K|)r^{-1}$, where $r=\min_{1\leq i\leq N}r_i$, and $\gamma$ is as in \eqref{eq3.3}. Then by \eqref{eq3.3}, $\xxx \sim_h \yyy$ implies
\begin{align*}
K_\xxx \subset &B(K_\yyy;\ \gamma  r^{|\xxx|}+r_\xxx|K|) \\
\subset & B(K_\yyy;\ (\gamma+|K|)r^{|\xxx|}) \subset B(K_\yyy;\ C_1 r_\yyy).
\end{align*}
Then by (ii), $p_\xxx \leq C_2 p_\yyy$ for some $C_2>0$; by the same reason, $p_\yyy \leq C_2 p_\xxx$. Hence for $0< \lambda <1$, $c(\xxx,\xxx^-)=p_\xxx \lambda^{-|\xxx|} \asymp p_\yyy \lambda^{-|\yyy|} = c(\yyy,\yyy^-)$ for all $\xxx \sim_h \yyy$. We can choose, for instance, $c(\xxx,\yyy) = \sqrt{c(\xxx,\xxx^-)c(\yyy,\yyy^-)}$ to get a $\lambda$-QNRW on $(X,\ed)$.

\vspace {0.2cm}

Next we assume  $\{p_i\}_{i=1}^N$ admits a $\lambda$-QNRW on $(X,\ed)$.
We first define an integer $N_0 =\lceil\log r/\log r'\rceil$ where $r'=\max_{1 \leq i\leq N}r_i$. Then for $p=\min_{1\leq i\leq N} p_i$, and for any $\zzz \in X$, we have $p_\zzz \geq p^{N_0}p_{\zzz^-}$ since the difference of the length of the words  $\zzz$ and $\zzz^-$ is at most $N_0$.

\vspace {0.1cm}
The $\lambda$-QNRW implies that $
p_\xxx \lambda^{-|\xxx|} = c(\xxx,\xxx^-) \asymp c(\xxx,\yyy) \asymp c(\yyy,\yyy^-) = p_\yyy \lambda^{-|\yyy|}$ for any $\xxx \sim_h \yyy$.
 Therefore there exists $C>0$ such that $p_\xxx \leq Cp_\yyy$ for any $\xxx \sim_h \yyy$. Let $C_1 = \min\{\gamma, (1-r)(2r)^{-1}|K|\}>0$. It follows that  for any $\yyy \in \mathcal J_m$,
\begin{align*}
|B(K_\yyy;\ C_1r_\yyy)| &\leq |K_\yyy|+2C_1r_\yyy \\
&\leq (1+2(1-r)(2r)^{-1})r^m|K|=r^{m-1}|K|.
\end{align*}
Hence for $\xxx \in X$ such that $K_\xxx \subset B(K_\yyy;\ C_1r_\yyy)$, $|\xxx| \geq m-1$ (by \eqref{eq3.1''}).  We claim that $p_\xxx \leq Cp^{-N_0}p_\yyy$. Indeed, let $\xxx_m \in \mathcal J_m$ such that it is on the same vertical path of $\xxx$. Then either $K_\xxx \subset K_{\xxx_m}$ or $K_{\xxx_m} \subset K_\xxx$. We have three distinct cases:

\vspace{0.1cm}

\noindent \hspace {0.23cm}  (a) If $\xxx_m=\yyy$, then $p_\xxx \leq p_{\yyy^-} \leq p^{-N_0}p_\yyy$.

\vspace{0.1cm}

\noindent \hspace {0.2cm}  (b) If $\xxx_m \neq \yyy$ and $|\xxx|=m-1$, then $\xxx_m^-=\xxx$ and $K_{\xxx_m} \subset K_\xxx \subset {B}(K_\yyy;\ C_1r_\yyy)$, which implies ${\rm dist} (K_{\xxx_m},K_\yyy) \leq C_1r_\yyy \leq \gamma r^m$, hence $\xxx_m \sim_h \yyy$. This shows that $p_\xxx \leq p^{-N_0}p_{\xxx_m} \leq Cp^{-N_0}p_\yyy$.

\vspace{0.1cm}

 \noindent \hspace {0.23cm} (c) If $\xxx_m \neq \yyy$ and $|\xxx|\geq m$, then $K_{\xxx} \subset K_{\xxx_m}$ and ${\rm dist}(K_{\xxx_m},K_\yyy) \leq {\rm dist} (K_\xxx,K_\yyy) \leq C_1r_\yyy \leq \gamma r^m$, hence $\xxx_m \sim_h \yyy$. This shows that $p_\xxx \leq p_{\xxx_m} \leq Cp_\yyy$.

\vspace{0.1cm}

 To  conclude the proof,  we let $\www$, $\vvv \in \Sigma^*\setminus \{\vartheta\}$. We can choose $\uuu$, $\ttt \in X$ such that $S_\uuu(K) \subset K_\www \subset K_{\uuu^-}$, and $K_\ttt \subset K_\vvv \subset K_{\ttt^-}$. If $K_\www \subset B(K_\vvv;\ C_1r_\vvv)$ as in the assumption in condition (i), then $K_\uuu \subset B(K_{\ttt^-};\ C_1r_{\ttt^-})$, hence
\begin{equation*}
p_\www \leq p_{\uuu^-} \leq p^{-N_0}p_\uuu \leq Cp^{-2N_0}p_{\ttt^-} \leq Cp^{-3N_0}p_\ttt \leq Cp^{-3N_0}p_\vvv.
\end{equation*}
Let $C_2=Cp^{-3N_0}$. Then (i) follows.
\end{proof}

\bigskip

\section{Martin boundary and hitting distribution}
\label{sec:5}

\noindent In this section we focus on the boundary behavior of natural random walks on augmented trees. We first show that Ancona's Theorem (Theorem \ref{th2.5}) can be applied to identify the Martin and hyperbolic boundaries with the self-similar sets. This extends \cite[Theorem 4.7]{Ka} for the SRW on the Sierpi\'{n}ski graph.

\vspace{1mm}

\begin{theorem} \label{th5.1}
Let  $(X,{\mathfrak E})$ be a pre-augmented tree which is hyperbolic and has bounded degree, and let  $\{Z_n\}_{n=0}^\infty$ be a $\lambda$-QNRW on $(X,{\mathfrak E})$. Then the transition probability $P$ satisfies the hypotheses in Theorem \ref{th2.5}. Hence  ${\mathcal M} = {\mathcal M}_{\min}$, and  the hyperbolic boundary $\partial_H X$  and ${\mathcal M}$ are homeomorphic under the canonical mapping.

In particular, if the IFS satisfies the OSC, and $(X,{\mathfrak E})$ is the augmented tree as in \eqref{eq3.3}, then $\partial_H X$, $\mathcal{M} $  and the self-similar set $K$ are all homeomorphic under the canonical mapping.
\end{theorem}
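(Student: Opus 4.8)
The plan is to check that the transition operator $P$ of the $\lambda$-QNRW satisfies the three hypotheses of Ancona's theorem (Theorem~\ref{th2.5}) --- namely that $(X,{\mathfrak E})$ is hyperbolic, that $P$ is uniformly irreducible with bounded range, and that the spectral radius satisfies $r(P)<1$ --- and then to read off the conclusions. Hyperbolicity is part of the assumption. Once the other two conditions are verified, Theorem~\ref{th2.5} gives ${\mathcal M}={\mathcal M}_{\min}$ together with a homeomorphism between ${\mathcal M}$ and $\partial_H X$ extending the identity on $X$; this is the first assertion. For the second assertion, the OSC forces the augmented tree of \eqref{eq3.3} to have bounded degree by Proposition~\ref{th3.7}, it is a hyperbolic pre-augmented tree by Theorem~\ref{th3.6}(i), so the first part applies; composing the homeomorphism $\partial_H X\cong{\mathcal M}$ with the canonical identification ${\iota}\colon\partial_H X\to K$ of Theorem~\ref{th3.6}(ii) shows that $\partial_H X$, ${\mathcal M}$ and $K$ are all homeomorphic under the canonical mapping.

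The bounded range condition is immediate: $P(\xxx,\yyy)>0$ only when $\xxx\sim\yyy$, i.e.\ $d(\xxx,\yyy)=1$. For uniform irreducibility I take $k_0=1$ and produce $\epsilon>0$ with $P(\xxx,\yyy)\ge\epsilon$ for every edge $(\xxx,\yyy)$. Writing $P(\xxx,\yyy)=c(\xxx,\yyy)/m(\xxx)$ and using $m(\xxx)\asymp c(\xxx,\xxx^-)$ from Proposition~\ref{th4.5} (which requires only the bounded degree), one has $P(\xxx,\xxx^-)\asymp1$, $P(\xxx,\yyy)\asymp1$ for $\yyy\sim_h\xxx$ by condition~(ii) of Definition~\ref{th4.4}, and, by condition~(i), $P(\xxx,\xxx\zzz)\asymp p_\zzz\lambda^{-1}$ for a child $\xxx\zzz$ of $\xxx$. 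The last quantity is bounded below because the word-length increment between consecutive levels ${\mathcal J}_m$ is bounded in terms of the IFS (cf.\ the proof of Theorem~\ref{th4.4'}), so $p_\zzz\ge(\min_i p_i)^{N_0}$ for a fixed $N_0$. The root $\vartheta$, where $m(\vartheta)=\lambda^{-1}$, is treated the same way.

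The substantive step is $r(P)<1$, which by Proposition~\ref{th2.7} amounts to the strong isoperimetric inequality $m(A)\le\eta\,c(\partial A)$ for all finite $A\subset X$. I will use two facts. First, the total conductance from a vertex $\xxx$ to its children equals $\lambda^{-1}c(\xxx,\xxx^-)$ (and equals $\lambda^{-1}=m(\vartheta)$ when $\xxx=\vartheta$): this is condition~(i) of Definition~\ref{th4.4} together with $\sum_{\yyy:\yyy^-=\xxx}p_\yyy=p_\xxx$, which holds because the cylinders $[\yyy]$ over the children $\yyy$ of $\xxx$ partition the cylinder $[\xxx]$ in $\Sigma^\infty$. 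Second, $m(\xxx)\asymp c(\xxx,\xxx^-)$ again. Given a finite $A$, I sum the first fact over $\xxx\in A$: the left-hand side is $\lambda^{-1}\sum_{\xxx\in A\setminus\{\vartheta\}}c(\xxx,\xxx^-)+\lambda^{-1}\mathbf{1}_{\{\vartheta\in A\}}$, while the right-hand side breaks into the child-edges that leave $A$ (a subset of $\partial A$, so their conductances sum to at most $c(\partial A)$) plus the child-edges that stay inside $A$, each of which equals $c(\yyy,\yyy^-)$ for its lower endpoint $\yyy\in A\setminus\{\vartheta\}$ and hence sums to at most $\sum_{\yyy\in A\setminus\{\vartheta\}}c(\yyy,\yyy^-)$. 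Rearranging and using $\lambda<1$ gives $\sum_{\xxx\in A\setminus\{\vartheta\}}c(\xxx,\xxx^-)\le\frac{\lambda}{1-\lambda}\,c(\partial A)$ and, when $\vartheta\in A$, $m(\vartheta)\le c(\partial A)$; combined with $m(\xxx)\asymp c(\xxx,\xxx^-)$ this yields the isoperimetric inequality with $\eta$ depending only on $\lambda$ and the comparison constant.

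The delicate point is exactly this last estimate. A priori one might worry that the mass of $A$ concentrates on its deepest levels, where the edges to the parents carry comparatively small conductance, so that $c(\partial A)$ could fail to control $m(A)$; note that the individual conductances $c(\xxx,\xxx^-)=p_\xxx\lambda^{-|\xxx|}$ need not increase down the tree. What rescues the argument is that the constant return ratio $\lambda\in(0,1)$ makes the downward conductance at every vertex exceed the upward one by the fixed factor $\lambda^{-1}>1$ --- equivalently, the levels grow geometrically, $m({\mathcal J}_n)\asymp\lambda^{-n}$ --- and this is enough for the elementary global accounting above to close. With $r(P)<1$ established, Ancona's theorem together with Theorem~\ref{th3.6} completes the proof as outlined in the first paragraph.
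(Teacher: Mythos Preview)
Your proof is correct and follows the same overall strategy as the paper: verify the hypotheses of Ancona's theorem, with the main work being the strong isoperimetric inequality for $r(P)<1$. The bounded-range and uniform-irreducibility checks are identical in content (your treatment of the child case $P(\xxx,\xxx\zzz)$ is more explicit than the paper's one-line appeal to Definition~\ref{th4.4} and \eqref{eq4.6}, but both are fine).

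Where you differ slightly is in the isoperimetric argument. The paper first restricts to the subtree $(X,{\mathfrak E}_v)$, proves $m_T(A)\le\frac{1+\lambda}{1-\lambda}c_T(\partial A)$ by induction on the highest level meeting $A$, and then transfers to the full graph via $m(\xxx)\le C\,m_T(\xxx)$ and $c_T(\partial A)\le c(\partial A)$. You instead sum the identity $\sum_{\yyy:\yyy^-=\xxx}c(\xxx,\yyy)=\lambda^{-1}c(\xxx,\xxx^-)$ over $\xxx\in A$ and split the child-edges into those leaving and those staying in $A$, obtaining $(\lambda^{-1}-1)\sum_{\xxx\in A\setminus\{\vartheta\}}c(\xxx,\xxx^-)\le c(\partial A)$ in one stroke. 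Both arguments rest on exactly the same ingredient (the constant return ratio $(R_\lambda)$), and your direct accounting is a clean unwinding of the paper's induction; neither approach is more general, but yours avoids the auxiliary tree restriction and the level-by-level bookkeeping.
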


\begin{proof}
We need to check that the conditions in Ancona's theorem  (Theorem \ref{th2.5}) are satisfied. Clearly $P$ is of bounded range. That
\begin{equation} \label{eq5.0}
P(\xxx,\yyy) = \frac{c(\xxx,\yyy)}{m(\xxx)} \geq \eta >0, \qquad \xxx \sim \yyy
\end{equation}
follows from Definition \ref {th4.4} and \eqref{eq4.6}. Hence $P$ is uniformly irreducible. It remains to show that $r(P)<1$. By Proposition \ref{th2.7}, it suffices to show that $(X,P)$ satisfies the strong isoperimetric inequality.

\vspace{0.1cm}

Consider the subtree $T = (X,{\mathfrak E}_v)$, and restrict the random walk to $T$. The transition probability is given by  $P_T(\xxx, \xxx^-) = \frac {c(\xxx, \xxx^-)}{m_T (\xxx)}$, and $=0$ otherwise, where  $m_T(\xxx) = c(\xxx,\xxx^-)+\sum_{\yyy:\yyy^-=\xxx} c(\xxx,\yyy)$ for $\xxx \in X \setminus \{\vartheta\}$. Note that $m_T(\vartheta) = m(\vartheta)$. We claim that the transition probability  $P_T$ satisfies the following strong isoperimetric inequality:
\begin{equation} \label{eq5.1}
m_T(A) \leq \frac{1+\lambda}{1-\lambda}c_T(\partial A)
\end{equation}
for all finite subsets $A \subset X$.
To prove the claim, we can first assume that $A$ is nonempty and connected (otherwise check \eqref{eq5.1} for each connected component and then sum them up). We use induction on $n:=\min\{m: A \subset X_m\}$. It is clear that if $n=0$, then $A=\{\vartheta\}$ and $m_T(A) = m(\vartheta) = c_T(\partial A)$ implies \eqref{eq5.1}. Suppose that for some $n \geq 1$, \eqref{eq5.1} holds for any finite connected subset in $ X_{n-1}$. Consider a connected subset $A \subset X$ such that $\min\{m: A \subset X_m\} = n$. If $\# A = 1$, then $m_T(A) = c_T(\partial A)$ implies \eqref{eq5.1}. Now we suppose $\# A \geq 2$. Then for any $\xxx \in A \cap \mathcal J_n$, $\xxx^- \in A$ since $T$ is a tree and $A$ is connected. Hence
\begin{align*}
m_T(A)
%&=  m_T(A \setminus \mathcal J_n) + m_T(A \cap \mathcal J_n) \\
&= m_T(A \setminus \mathcal J_n) + \sum_{\xxx \in A\cap \mathcal J_n} \Big( c(\xxx,\xxx^-) + \sum_{\yyy: \yyy^-=\xxx} c(\xxx,\yyy) \Big) \\
&= m_T(A \setminus \mathcal J_n) + \sum_{\xxx \in A\cap \mathcal J_n} (1+\lambda^{-1})c(\xxx,\xxx^-) \qquad \qquad  \hbox{(by \eqref{eq4.1})} \\
&\leq \frac{1+\lambda}{1-\lambda}\Big(c_T(\partial(A \setminus \mathcal J_n))+\sum_{\xxx \in A\cap \mathcal J_n} (\lambda^{-1}-1)c(\xxx,\xxx^-)\Big) \quad \hbox{(by induction)}\\
&= \frac{1+\lambda}{1-\lambda}\Big(c_T(\partial(A \setminus \mathcal J_n))+\sum_{\xxx \in A\cap \mathcal J_n}\big(\sum_{\yyy: \yyy^-=\xxx} c(\xxx,\yyy)-c(\xxx,\xxx^-)\big)\Big) \quad \hbox{(by \eqref{eq4.1})}\\
& = \frac{1+\lambda}{1-\lambda} c_T(\partial A)\qquad  \qquad \hbox {(use tree property)}
\end{align*}
This completes the proof of the claim.  Next observe that for any $\xxx \in X \setminus \{\vartheta\}$, there exists $C_1>0$ such that
\begin{align*}
m(\xxx) &\leq C_1p_\xxx\lambda^{-|\xxx|} = C_1c(\xxx,\xxx^-) \qquad \qquad \quad \hbox{(by bounded degree and \eqref{eq4.6})}\\
&= \frac{C_1\lambda}{1+\lambda}\big(\sum_{\yyy: \yyy^-=\xxx} c(\xxx,\yyy)+c(\xxx,\xxx^-)\big) \quad ~~\hbox{(by \eqref{eq4.1})} \\
&= \frac{C_1\lambda}{1+\lambda}m_T(\xxx).
\end{align*}
This together with the above claim imply the following strong isoperimetric inequality on $(X,{\mathfrak E})$:
$$
m(A) \leq \frac{C_1\lambda}{1+\lambda}m_T(A) \leq \frac{C_1\lambda}{1-\lambda}c_T(\partial A) \leq \frac{C_1\lambda}{1-\lambda}c(\partial A), \quad A \subset X\ \hbox{finite}.
$$

For the last part, we observe the OSC implies that the the augmented tree $(X, {\mathcal E})$ is of bounded degree (Proposition \ref{th3.7}), and Theorem \ref {th3.6} and the above give the homeomorphism among $K$, $\partial_HX$ and ${\mathcal M}$.
\end{proof}

\bigskip

Since $r(P)<1$ as shown in the above proof, we can apply the first part of Theorem \ref{th2.5} to obtain the same inequality for $F(\xxx, \yyy)$ of the $\lambda$-QNRW.

\medskip

\begin{corollary} \label{th5.2}
Let $(X,{\mathfrak E})$ be a hyperbolic pre-augmented tree with bounded degree, and let  $\{Z_n\}_{n=0}^\infty$ be a $\lambda$-QNRW on $(X,{\mathfrak E})$. Then for $\delta \geq 0$, there is a constant $C_\delta \geq 1$ such that for any $\xxx$, $\yyy \in X$ and $\mathbf{u}$  within distance $\delta$ from a geodesic segment between $\xxx$ and $\yyy$,
\begin{equation}  \label{eq5.2}
F(\xxx, \mathbf{u})F(\mathbf{u}, \yyy) \leq
F(\xxx, \yyy) \leq C_\delta  F(\xxx, \mathbf{u})F(\mathbf{u}, \yyy).
\end{equation}
\end{corollary}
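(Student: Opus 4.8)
The plan is to read \eqref{eq5.2} off directly from Ancona's theorem (Theorem \ref{th2.5}), since every hypothesis of that theorem was already verified for the $\lambda$-QNRW in the course of proving Theorem \ref{th5.1}. Concretely, I would first recall from that proof the four facts it establishes about $P$: the graph $(X,{\mathfrak E})$ is hyperbolic (by assumption, and in the OSC case via Theorem \ref{th3.6}); $P$ has bounded range, since transitions occur only along vertical or horizontal edges and any such edge has length $1$; $P$ is uniformly irreducible because $P(\xxx,\yyy) = c(\xxx,\yyy)/m(\xxx) \geq \eta > 0$ for all $\xxx \sim \yyy$ by \eqref{eq5.0}; and $r(P) < 1$, which was deduced from the strong isoperimetric inequality $m(A) \leq \frac{C_1\lambda}{1-\lambda}c(\partial A)$ via Proposition \ref{th2.7}. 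With these in hand, Theorem \ref{th2.5} applies verbatim and produces, for each $\delta \geq 0$, a constant $C_\delta \geq 1$ such that $F(\xxx,\mathbf{u})F(\mathbf{u},\yyy) \leq F(\xxx,\yyy) \leq C_\delta F(\xxx,\mathbf{u})F(\mathbf{u},\yyy)$ whenever $\mathbf{u}$ lies within distance $\delta$ of a geodesic segment between $\xxx$ and $\yyy$, which is exactly the hypothesis of the corollary.

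For completeness I would also remark that the left-hand inequality in \eqref{eq5.2} needs nothing beyond the strong Markov property: any trajectory from $\xxx$ to $\yyy$ that first reaches $\mathbf{u}$ and then reaches $\yyy$ contributes to $F(\xxx,\yyy)$, so $F(\xxx,\yyy) \geq F(\xxx,\mathbf{u})F(\mathbf{u},\yyy)$ irrespective of the geometry. It is the upper (Ancona) bound that carries all the content, and that is precisely the part of Theorem \ref{th2.5} for which hyperbolicity and $r(P) < 1$ are essential.

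There is essentially no obstacle here: the corollary is a bookkeeping consequence of Theorem \ref{th5.1} combined with Theorem \ref{th2.5}, and in particular it requires no further analysis of geodesics in the augmented tree, since the notion of ``$\mathbf{u}$ within distance $\delta$ from a geodesic segment'' is inherited directly from the hypothesis of Theorem \ref{th2.5}. The only minor point to keep straight is that the constant $C_\delta$ depends on $\delta$ but not on $\xxx$, $\yyy$, $\mathbf{u}$, which is already part of the statement of Theorem \ref{th2.5}.
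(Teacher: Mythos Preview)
Your proposal is correct and matches the paper's own treatment: the paper does not give a separate proof of this corollary, but simply remarks that since $r(P)<1$ was shown in the proof of Theorem~\ref{th5.1}, the first part of Theorem~\ref{th2.5} applies directly. Your added observation that the left-hand inequality is an elementary consequence of the strong Markov property is a reasonable clarification, though the paper leaves it implicit as part of Theorem~\ref{th2.5}.
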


\bigskip

The above corollary allows us to give a useful estimate  for $F(\xxx, \yyy)$ in terms of the Gromov product, which will be essential in the estimates of the Martin kernel and the Na\"{i}m kernel.  For this,  we need to restrict our consideration to the IFS satisfying the OSC, and to the class of natural random walks with natural weights $p_i = r_i^\alpha, 1\leq i \leq N$. In this case,  we have  $p_{\xxx} = \mu (S_{\xxx}(K)) = r_{\xxx}^\alpha \asymp r^{\alpha m}$ for $\xxx \in {\mathcal J}_m$, where $r= \min_{1\leq i\leq N}\{r_i\}$, $\alpha$ is the Hausdorff dimension of $K$, and  $\mu$ is the self-similar measure corresponding to $\{p_i\}_{i=1}^N$ (Lemma \ref{th3.0}).

\medskip

\begin{theorem} \label{th5.3}
Let $\{S_i\}_{i=1}^N$ be an IFS satisfying the OSC, and let $\{Z_n\}_{n=0}^\infty$ be a $\lambda$-NRW on the augmented tree $(X, {\mathfrak E})$. Then
$$
F(\xxx, \yyy) \asymp \lambda^{|\xxx|-(\xxx|\yyy)}r^{\alpha(|\yyy|-(\xxx|\yyy))}, \qquad  \forall \ \xxx, \yyy \in X
$$
where $(\xxx|\yyy)$ is the Gromov product. Moreover, the Martin kernel satisfies
$$
K(\xxx,\yyy)  \asymp \lambda^{|\xxx|-(\xxx|\yyy)}r^{-\alpha(\xxx|\yyy)}, \qquad  \xxx, \yyy \in X,
$$
and also for $K(\xxx, \xi)$ by replacing $\yyy$ with $\xi \in \partial_HX  (\approx {\mathcal M}\approx K)$ in the above estimate.
\end{theorem}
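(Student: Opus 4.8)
The plan is to reduce everything to the two anchored estimates we already have at the root, namely $F(\xxx,\vartheta)=\lambda^{|\xxx|}$ (Proposition \ref{th4.1}, using $\lambda\in(0,1)$) and $F(\vartheta,\xxx)\asymp p_\xxx\asymp r^{\alpha|\xxx|}$ (Theorem \ref{th4.6} and Lemma \ref{th3.0}), and to transport these along geodesics via Corollary \ref{th5.2}. The essential point is that Corollary \ref{th5.2} must be invoked only a \emph{bounded} number of times: iterating it level by level would let the constant $C_0$ compound, so instead one routes from $\xxx$ and $\yyy$ down to $\vartheta$, where the exact formulas are available, using at most a handful of applications.

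First I would fix $\xxx,\yyy$ and take the canonical geodesic $\pi(\xxx,\yyy)=\pi(\xxx,\uuu)\cup\pi(\uuu,\vvv)\cup\pi(\vvv,\yyy)$, where $\pi(\uuu,\vvv)$ is horizontal of length $h=d(\uuu,\vvv)\le M$ (Proposition \ref{th3.2}) and the other two pieces are vertical. Then $\uuu$ is the ancestor of $\xxx$ at level $\ell:=|\uuu|=|\vvv|$, $\vvv$ is the ancestor of $\yyy$ at that same level, and $(\xxx|\yyy)=\ell-h/2$ by \eqref{eq3.1'}. Applying Corollary \ref{th5.2} with $\delta=0$ at the vertex $\uuu\in\pi(\xxx,\yyy)$, and then again at $\vvv$, which lies on the subgeodesic $\pi(\uuu,\yyy)$, gives $F(\xxx,\yyy)\asymp F(\xxx,\uuu)\,F(\uuu,\vvv)\,F(\vvv,\yyy)$ with a constant independent of $\xxx,\yyy$.

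Next I would estimate the three factors. Since the vertical descent from $\xxx$ through $\uuu$ to $\vartheta$ is a geodesic (any path lowering the level by $|\xxx|$ needs $|\xxx|$ vertical steps) and $\uuu$ lies on it, Corollary \ref{th5.2} yields $F(\xxx,\uuu)\asymp F(\xxx,\vartheta)/F(\uuu,\vartheta)=\lambda^{|\xxx|}/\lambda^{\ell}=\lambda^{|\xxx|-\ell}$; symmetrically $\vvv$ lies on the vertical geodesic from $\vartheta$ to $\yyy$, so $F(\vvv,\yyy)\asymp F(\vartheta,\yyy)/F(\vartheta,\vvv)\asymp r^{\alpha|\yyy|}/r^{\alpha\ell}=r^{\alpha(|\yyy|-\ell)}$. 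For the middle factor, $F(\uuu,\vvv)\le 1$ trivially, and concatenating the uniform-irreducibility lower bounds $P^{k}(w,w')\ge\epsilon$ (with $k\le k_0$) along the horizontal path of length $h\le M$ gives $F(\uuu,\vvv)\ge\epsilon^{M}$; hence $F(\uuu,\vvv)\asymp 1$. Multiplying, $F(\xxx,\yyy)\asymp\lambda^{|\xxx|-\ell}r^{\alpha(|\yyy|-\ell)}$, and since $\ell-(\xxx|\yyy)=h/2\in[0,M/2]$ the corrective factors $\lambda^{-h/2}$ and $r^{-\alpha h/2}$ lie between $1$ and fixed constants, so $\ell$ may be replaced by $(\xxx|\yyy)$, giving the stated estimate for $F(\xxx,\yyy)$.

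Finally, $K(\xxx,\yyy)=F(\xxx,\yyy)/F(\vartheta,\yyy)$ together with $F(\vartheta,\yyy)\asymp r^{\alpha|\yyy|}$ gives $K(\xxx,\yyy)\asymp\lambda^{|\xxx|-(\xxx|\yyy)}r^{-\alpha(\xxx|\yyy)}$; the degenerate cases $\xxx=\yyy$, $\xxx=\vartheta$, $\yyy=\vartheta$ are checked directly against the formula. For $\xi\in\partial_H X$ I would pick a geodesic ray $(\yyy_n)_n$ representing $\xi$: then $\yyy_n\to\xi$ in the Martin topology (Theorem \ref{th5.1}), so $K(\xxx,\yyy_n)\to K(\xxx,\xi)$, while $(\xxx|\yyy_n)$ is eventually constant and equal to $(\xxx|\xi)$ by Lemma \ref{th3.8}(ii); passing to the limit yields $K(\xxx,\xi)\asymp\lambda^{|\xxx|-(\xxx|\xi)}r^{-\alpha(\xxx|\xi)}$. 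I expect the main obstacle to be purely organizational: ensuring every use of Corollary \ref{th5.2} is at a single vertex lying \emph{exactly} on a geodesic (so the constant is $C_0$, independent of $\xxx,\yyy$), and cleanly controlling the bounded gap between $\ell$ and $(\xxx|\yyy)$ coming from the horizontal segment.
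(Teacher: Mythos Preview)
Your proposal is correct and follows essentially the same route as the paper's proof: decompose $F(\xxx,\yyy)$ along the canonical geodesic via Corollary~\ref{th5.2}, reduce the two vertical factors to the root estimates from Proposition~\ref{th4.1} and Theorem~\ref{th4.6}, bound the horizontal factor by a constant since its length is at most $M$, and absorb the $O(1)$ gap between $\ell$ and $(\xxx|\yyy)$. The paper's write-up is terser (it packages the vertical reductions into the single display $F(\xxx,\yyy)\asymp\frac{F(\xxx,\vartheta)}{F(\uuu,\vartheta)}F(\uuu,\vvv)\frac{F(\vartheta,\yyy)}{F(\vartheta,\vvv)}$ and inserts the factor $(\lambda r^\alpha)^{d(\uuu,\vvv)/2}$ directly rather than arguing $\ell\approx(\xxx|\yyy)$), but the substance is identical; your more explicit treatment of the boundary extension via Lemma~\ref{th3.8}(ii) and Theorem~\ref{th5.1} is a reasonable elaboration of what the paper leaves implicit.
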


\begin{proof}
Consider the canonical geodesic $\pi(\xxx, \yyy) = \pi(\xxx, \uuu) \cup \pi(\uuu,\vvv) \cup \pi(\vvv,\yyy)$ as in Section~\ref{sec:3}.
It follows from Corollary \ref{th5.2} that
$$
F(\xxx,\yyy) \asymp F(\xxx,\uuu)F(\uuu,\vvv)F(\vvv,\yyy)
\asymp \dfrac{F(\xxx, \vartheta)}{F(\uuu, \vartheta)} \cdot
F(\uuu, \vvv)\cdot \dfrac{F(\vartheta,\yyy)}{F(\vartheta,\vvv)}.
$$
Since $d(\uuu,\vvv) \leq M$ (Proposition \ref{th3.2}), \eqref{eq5.0} yields $\eta^M \leq F(\uuu,\vvv) \leq 1$.
Applying Theorem \ref{th4.1}  ($F(\www, \vartheta) = \lambda^{|\www|}$), Theorem \ref{th4.6} ($F(\vartheta, \www)  \asymp r^{\alpha |\www|}$) and $d(\uuu,\vvv) \leq M$ to the other two factors of the above expression, we have
$$
F(\xxx,\yyy) \asymp \lambda^{d(\xxx, \uuu)}r^{\alpha d(\vvv, \yyy)} \asymp \lambda^{d(\xxx, \uuu)}r^{\alpha d(\vvv, \yyy)} (\lambda r^\alpha)^{d(\uuu,\vvv)/2}= \lambda^{|\xxx|-(\xxx|\yyy)}r^{\alpha(|\yyy|-(\xxx|\yyy))}.
$$
\indent For the estimate of the  Martin kernel, we recall that $K(\xxx,\yyy)= \frac {G(\xxx, \yyy)}{G(\vartheta,\yyy)} = \frac{F(\xxx,\yyy)}{F(\vartheta,\yyy)}$, and the Gromov product $(\xxx|\xi)$ is defined for $\xi \in \partial_HX$  (see (\ref {eq3.4})). Using the above estimate of $F(\xxx,\yyy)$, the second part follows.
\end{proof}

\medskip

In the following, we will assume the IFS satisfies the OSC, so the augmented tree $(X, {\mathfrak E})$ has bounded degree (Proposition \ref {th3.7}). By Theorem \ref{th5.1}, we identify $K$ with the Martin boundary ${\mathcal M}$ of a $\lambda$-QNRW $\{Z_n\}_{n=0}^{\infty}$.  We will study the hitting distribution on ${\mathcal M} \approx K$ starting from the root $\vartheta$.

\medskip

First we construct a projection $\{Z_n\}_{n=0}^\infty$ on $X$ onto $K$. For $\{S_i\}_{i=1}^N$ satisfying the OSC, we let $O$ be an open set in the OSC such that $O\cap K \not = \emptyset$ \cite{Sc}. Define a projection $\iota: X \to K $ by selecting arbitrarily
\begin{equation*}
\iota (\xxx) \in S_{\xxx} (O \cap K), \quad \xxx \in X.
\end{equation*}
We extend $\iota$ to $\widehat X = X \cup {\mathcal M} $ by defining $\iota(\xi) = \xi$ for $\xi \in {\mathcal M}  (\approx K)$. Then a sequence $\{\xxx_n\}_n \subset X$ with $|\xxx_n| \to \infty$ converges to
$\xi \in \widehat X$ if and only if $\iota(\xxx_n) \to \iota(\xi) \in K$ in the Euclidean topology. Hence $\iota$ is continuous on $\widehat{X}$, and $\iota |_{\mathcal M}$ is the canonical homeomorphism of  ${\mathcal M}$ onto $K$. Also we have
\begin{equation}\label {eq5.3}
\iota (Z_{\infty}) = \lim\limits_{n \to \infty} \iota (Z_n) \quad {\mathbb P_\vartheta} -a. e.
\end{equation}
Let $\nu_\vartheta$ be the distribution of $Z_\infty$, and  $\nu$  the distribution of $\iota (Z_\infty)$. It is direct to check that $\nu = \nu_{\vartheta}\circ \iota^{-1}$ on $K$.  For later use, we shall write $\mathbb P = \mathbb P_\vartheta$ and $\mathbb E = \mathbb E_\vartheta$ if there is no confusion.

\bigskip

For $\ell \geq 1$, let $\tau_\ell : = \inf \{n \geq 0: Z_n \in \mathcal{J}_\ell\}$ be the first hitting time of $\mathcal{J}_\ell$. Then by
Theorem \ref{th4.6}, the distribution of $Z_{\tau_\ell}$ is given by
\begin{equation} \label{eq5.4}
\mathbb{P}(Z_{\tau_\ell} = \xxx )= p_\xxx = \mu(S_\xxx(K)), \quad \xxx \in \mathcal{J}_\ell,
\end{equation}
where $\mu$ is the self-similar measure associated with the probability weight $\{p_i\}_{i=1}^N$. Note that under the OSC, $\mu (S_\xxx(K) \cap S_\yyy(K))=0$ for $\xxx \in {\mathcal J}_m, \ \xxx \neq \yyy$.

\bigskip

\begin{proposition} \label{th5.4}
For $\xxx \in \mathcal{J}_m$ and $\ell \geq m $,
\begin{equation} \label{eq5.5}
\mathbb{P}\big (\iota(Z_{\tau_\ell} ) \in S_{\xxx} (K)\big ) = \mu (S_\xxx(K)).
\end{equation}
Also  $\mathbb{P}\big (\iota(Z_{\tau_\ell} ) \in S_{\xxx} (K)\cap S_{\yyy} (K)\big ) = 0$ for any $\xxx, \yyy \in {\mathcal J}_m, \ \xxx \not = \yyy$.
\end{proposition}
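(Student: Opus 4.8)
\emph{Proof idea.} The plan is to reduce the first identity to a combinatorial description of the projection $\iota$, combined with the exact distribution of $Z_{\tau_\ell}$ given in \eqref{eq5.4}. For $\xxx \in \mathcal{J}_m$ and $\ell \geq m$, write $D_\ell(\xxx) = \{\yyy \in \mathcal{J}_\ell : \yyy^{-(\ell-m)} = \xxx\}$ for the set of level-$\ell$ descendants of $\xxx$. Since $\mathcal{J}_\ell$ is finite and $\iota(Z_{\tau_\ell}) \in S_{Z_{\tau_\ell}}(O\cap K)$, the event $\{\iota(Z_{\tau_\ell}) \in S_\xxx(K)\}$ is the disjoint union of the events $\{Z_{\tau_\ell}=\yyy\}$ over those $\yyy \in \mathcal{J}_\ell$ with $\iota(\yyy) \in S_\xxx(K)$, so by \eqref{eq5.4} it suffices to identify this index set. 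The key claim is that, for $\yyy \in \mathcal{J}_\ell$, one has $\iota(\yyy) \in S_\xxx(K)$ if and only if $\yyy \in D_\ell(\xxx)$. Granting this, $\mathbb{P}(\iota(Z_{\tau_\ell}) \in S_\xxx(K)) = \sum_{\yyy \in D_\ell(\xxx)} p_\yyy = \sum_{\yyy \in D_\ell(\xxx)} \mu(S_\yyy(K))$; and since $S_\xxx(K) = \bigcup_{\yyy \in D_\ell(\xxx)} S_\yyy(K)$ while all pairwise intersections $S_\yyy(K) \cap S_{\yyy'}(K)$ with $\yyy \neq \yyy'$ in $\mathcal{J}_\ell$ are $\mu$-null under the OSC (the remark following \eqref{eq5.4}), this sum equals $\mu(S_\xxx(K))$, which is \eqref{eq5.5}.

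The nontrivial content is the ``if and only if''. The direction $\yyy \in D_\ell(\xxx) \Rightarrow \iota(\yyy) \in S_\xxx(K)$ is immediate, since then $S_\yyy(K) \subseteq S_\xxx(K)$ and $\iota(\yyy) \in S_\yyy(O\cap K) \subseteq S_\yyy(K)$. For the converse, set $\yyy_m := \yyy^{-(\ell-m)} \in \mathcal{J}_m$ and assume $\yyy_m \neq \xxx$; I must rule out $\iota(\yyy) \in S_\xxx(K)$. First, because the OSC gives $S_i(O) \subseteq O$ and hence $S_\zzz(O) \subseteq O$ for every word $\zzz$, and $\yyy$ is a word extending $\yyy_m$, we get $S_\yyy(O) \subseteq S_{\yyy_m}(O)$, so $\iota(\yyy) \in S_\yyy(O\cap K) \subseteq S_{\yyy_m}(O)$. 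Second, two distinct words in $\mathcal{J}_m$ are incomparable: if, say, $\xxx$ were a proper prefix of $\yyy_m$, then $\yyy_m^-$ would still be an extension of $\xxx$, forcing $r_{\yyy_m^-} \leq r_\xxx \leq r^m$, contradicting $r_{\yyy_m^-} > r^m$ from \eqref{eq3.1''}. Hence $\xxx$ and $\yyy_m$ branch at a common prefix $\www$ into different letters $i \neq j$, and by the OSC $S_\xxx(O) \cap S_{\yyy_m}(O) \subseteq S_\www\big(S_i(O) \cap S_j(O)\big) = \emptyset$. Third, $K \subseteq \overline{O}$: fixing $\xi_0 \in O \cap K$ (possible by Schief's theorem \cite{Sc}), the semigroup orbit $\{S_\www(\xi_0): \www \in \Sigma^*\}$ lies in $O$ and is dense in $K$, so $S_\xxx(K) \subseteq S_\xxx(\overline O) = \overline{S_\xxx(O)}$. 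Since $S_{\yyy_m}(O)$ is open and disjoint from $S_\xxx(O)$, it is disjoint from $\overline{S_\xxx(O)}$ as well (an open set meeting the closure of a set must meet the set itself), so $\iota(\yyy) \in S_{\yyy_m}(O)$ forces $\iota(\yyy) \notin S_\xxx(K)$, as required.

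For the second assertion: if $\xxx, \xxx' \in \mathcal{J}_m$ are distinct and $\iota(Z_{\tau_\ell}) \in S_\xxx(K) \cap S_{\xxx'}(K)$, then the claim (applied with $\yyy = Z_{\tau_\ell}$) forces $Z_{\tau_\ell} \in D_\ell(\xxx) \cap D_\ell(\xxx')$; but a level-$\ell$ word has a unique level-$m$ ancestor, so $D_\ell(\xxx) \cap D_\ell(\xxx') = \emptyset$. Hence that event is empty and has probability $0$.

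The main obstacle, and the only place the argument is not purely formal, is controlling $\iota(\yyy)$ near the boundaries of cells: a point of $K$ may lie in $S_\xxx(K) \cap S_{\xxx'}(K)$ for distinct level-$m$ cells, so ``$\iota(\yyy) \in S_\xxx(K)$'' does not \emph{a priori} pin down the cell containing $\yyy$. The resolution is to observe that $\iota(\yyy)$ in fact lands in the \emph{open} set $S_{\yyy_m}(O)$, and to combine the disjointness of the open images $S_\cdot(O)$ (pure OSC) with the inclusion $K \subseteq \overline{O}$ through the elementary open-set/closure fact used above.
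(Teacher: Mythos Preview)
Your argument is correct and follows essentially the same route as the paper: both identify $\{\yyy\in\mathcal{J}_\ell:\iota(\yyy)\in S_\xxx(K)\}$ with the set of level-$\ell$ descendants of $\xxx$ by using that $\iota(\yyy)$ lies in the \emph{open} set $S_{\yyy_m}(O)$, which is disjoint from $\overline{S_\xxx(O)}\supseteq S_\xxx(K)$ when $\yyy_m\neq\xxx$, and then sum \eqref{eq5.4}. Your write-up is simply more explicit than the paper's about the intermediate facts (incomparability of distinct words in $\mathcal{J}_m$, $K\subseteq\overline{O}$, and the open-set/closure disjointness); one small notational point is that your ``$\yyy_m^-$'' should be read as ``$\yyy_m$ with its last letter deleted'' (as in \eqref{eq3.1''}) rather than the tree-parent in $X$, but with that reading the incomparability step is fine.
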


\begin{proof}
Let $\xxx, \zzz \in \mathcal{J}_m$, $\zzz\neq \xxx$. Since $\iota(\zzz) \in S_{\zzz} (O \cap K)$, the OSC implies that $S_\zzz(O) \cap S_\xxx(O) =\emptyset$. Hence $\iota(\zzz) \not \in \overline {S_\xxx(O)}\supset {S_\xxx(K)}$. It follows that  for $\xxx \in \mathcal{J}_m$   and $\iota(Z_{\tau_\ell}(\omega)) \in S_{\xxx}(K)$, $\ell \geq m$, $Z_{\tau_\ell}(\omega)$ must be of the form $\xxx\uuu$ for some $\uuu \in \Sigma^*$ and $\xxx\uuu \in {\mathcal J}_\ell$.
By \eqref{eq5.4},
\begin{align*}
\mathbb{P}(\iota(Z_{\tau_\ell}) \in S_{\xxx} (K))
& = \sum_{\uuu \in \Sigma^\ast: \xxx\uuu \in \mathcal{J}_\ell}\mathbb{P}(Z_{\tau_\ell} = \xxx  \uuu) \\
& = \sum_{\uuu \in \Sigma^\ast: \xxx\uuu \in \mathcal{J}_\ell}\mu(S_{\xxx\uuu}(K))
= \mu(S_\xxx(K)) .
\end{align*}
Using the fact that $\mathbb{P}(Z_{\tau_\ell} = \xxx  \zzz, \ Z_{\tau_\ell}= \yyy\zzz')= 0 $ and adopting the same argument, the second part follows.
\end{proof}

\medskip

To obtain the hitting distribution $\nu_\vartheta$, a simple minded approach is to take limit of $\ell$ in \eqref{eq5.5} to obtain $\mathbb{P}\big (\iota (Z_{\infty} )\in S_{\xxx} (K)\big ) = \mu(S_\xxx(K))$. However this requires that $\chi_{S_{\xxx} (K)} (\iota (Z_{\tau_\ell})) \to \chi_{S_{\xxx} (K)} (\iota (Z_\infty))$ a.e., but we cannot make such conclusion directly, as the convergence for the composition $f(\iota (Z_{\tau_\ell}))$ generally requires the continuity of $f$ \cite{BJ}. We need to go through a more detailed analysis.

\medskip

\begin{lemma} \label{th5.5} For any two distinct $\xxx, \xxx'\in {\mathcal J}_m$, we have
$$
\nu (S_\xxx(K) \cap S_{\xxx'} (K)) =0.
$$
\end{lemma}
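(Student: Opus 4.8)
The plan is to transfer the statement back to the walk at finite levels, where Proposition~\ref{th5.4} already gives us what we need, and then control the passage to the limit. Fix distinct $\xxx,\xxx'\in\mathcal J_m$. The difficulty flagged in the paragraph before the lemma is that $\chi_{S_\xxx(K)\cap S_{\xxx'}(K)}$ is not continuous, so we cannot simply push the identity $\mathbb P(\iota(Z_{\tau_\ell})\in S_\xxx(K)\cap S_{\xxx'}(K))=0$ through the a.s.\ convergence $\iota(Z_n)\to\iota(Z_\infty)$. The way around this is that $\iota(Z_\infty)$ almost surely lands in $S_\xxx(K)\cap S_{\xxx'}(K)$ only through the walk being eventually ``committed'' to $\xxx$ and eventually ``committed'' to $\xxx'$, which is impossible. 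More precisely, I would argue: if $\iota(Z_\infty)\in S_\xxx(K)$, then by continuity of $\iota$ on $\widehat X$ and the fact that $S_\xxx(O)$ is open with $\overline{S_\xxx(O)}\supset S_\xxx(K)$ (as in the proof of Proposition~\ref{th5.4}), the trajectory $\iota(Z_n)$ must eventually enter the neighbourhood $S_\xxx(O)$ and stay near $S_\xxx(K)$; combined with the OSC disjointness $S_\xxx(O)\cap S_{\xxx'}(O)=\emptyset$, this forces $\iota(Z_\infty)\notin S_{\xxx'}(K)$ unless $\iota(Z_\infty)$ lies on the ``seam'' $S_\xxx(K)\cap S_{\xxx'}(K)$, which has Euclidean structure but the walk still cannot be simultaneously forced into both open sets.

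A cleaner route, which I would actually write up, is to compare $\nu$ directly with $\mu$ on cylinder sets. From Proposition~\ref{th5.4}, for every $\ell\ge m$ we have $\mathbb P(\iota(Z_{\tau_\ell})\in S_\yyy(K))=\mu(S_\yyy(K))$ for each $\yyy\in\mathcal J_m$, and these events (up to a $\mathbb P$-null overlap) partition the sample space since $K=\bigcup_{\yyy\in\mathcal J_m}S_\yyy(K)$. Since $\mathbb P(\iota(Z_{\tau_\ell})\in S_\yyy(K))$ does not depend on $\ell$ and the sets $\{\iota(Z_{\tau_\ell})\in S_\yyy(K)\}$ are ``almost decreasing'' in $\ell$ — once $Z_{\tau_\ell}=\yyy\uuu$ lies below $\yyy$, the walk converges to a boundary point of $S_\yyy(K)$ unless it first exits, but the exit probability is controlled — one deduces by a Borel–Cantelli / martingale-convergence argument on the indicators $\chi_{S_\yyy(K)}(\iota(Z_{\tau_\ell}))$ that $\mathbb P(\iota(Z_\infty)\in S_\yyy(K))=\mu(S_\yyy(K))$ for each single $\yyy$, and hence $\sum_{\yyy\in\mathcal J_m}\nu(S_\yyy(K))=\sum_{\yyy\in\mathcal J_m}\mu(S_\yyy(K))=1=\nu(K)$. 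Because the sets $S_\yyy(K)$ cover $K$, equality of the sum of the measures with the measure of the union forces all pairwise intersections to be $\nu$-null; this is exactly the inclusion–exclusion argument that $\nu(S_\xxx(K)\cap S_{\xxx'}(K))=0$.

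The main obstacle is precisely the non-continuity issue already raised: making rigorous that $\mathbb P(\iota(Z_\infty)\in S_\yyy(K))=\mu(S_\yyy(K))$ for an individual $\yyy$, given only the level-$\ell$ identities. I expect this to be handled by observing that $\{\iota(Z_\infty)\in S_\yyy(K)\}$ differs from $\liminf_\ell\{\iota(Z_{\tau_\ell})\in S_\yyy(K)\}$ by a null set — since once the walk descends into the subtree rooted at some $\yyy\uuu\in\mathcal J_\ell$ it can only converge, by Theorem~\ref{th3.6}, to a point of $S_{\yyy\uuu}(K)\subset S_\yyy(K)$, so the indicator sequence is eventually constant on the event that $Z_\infty$ is an ``interior'' coding point, and the exceptional set of boundary codings is itself $\nu_\vartheta$-null because distinct $S_\yyy(K)$ overlap only on the exceptional set for $\mu$ under the OSC and $\nu_\vartheta$ pushes forward consistently at every finite level. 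Then dominated convergence gives the single-$\yyy$ identity, and the covering/inclusion–exclusion step finishes the lemma. (Indeed this lemma is the technical heart that lets one identify $\nu$ with $\mathcal H^\alpha$ in Theorem~\ref{th5.6}.)
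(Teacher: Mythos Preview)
Your proposal has a genuine gap: the argument is circular. In your ``cleaner route'' you want to establish $\nu(S_\yyy(K))=\mu(S_\yyy(K))$ for each individual $\yyy\in\mathcal J_m$, and then deduce that the pairwise intersections are $\nu$-null by inclusion--exclusion. But when you try to justify that single-cell identity, you write that the indicator sequence $\chi_{S_\yyy(K)}(\iota(Z_{\tau_\ell}))$ is eventually constant \emph{outside the exceptional set of boundary codings}, and that this exceptional set is ``itself $\nu_\vartheta$-null''. That exceptional set is exactly $\bigcup_{\yyy\neq\yyy'} S_\yyy(K)\cap S_{\yyy'}(K)$, and its $\nu$-nullity is precisely the statement of Lemma~\ref{th5.5}. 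So you are assuming what you want to prove. Note also that the paper proves Theorem~\ref{th5.6} (the identification $\nu=\mu$) \emph{using} Lemma~\ref{th5.5}, not the other way around; your order of deduction is inverted.

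There is a second concrete error: the claim that ``once the walk descends into the subtree rooted at some $\yyy\uuu\in\mathcal J_\ell$ it can only converge \ldots\ to a point of $S_{\yyy\uuu}(K)$'' is false on the augmented tree. Horizontal edges allow the chain to leave the descendants of $\yyy\uuu$ at any level, so $Z_{\tau_\ell}=\yyy\uuu$ does not trap the walk, and the events $\{\iota(Z_{\tau_\ell})\in S_\yyy(K)\}$ are not monotone in $\ell$. This is exactly why one cannot pass to the limit naively and why the paper introduces an auxiliary device: for each $k>m$ it builds an enlarged closed set $T_{k,\xxx}=S_\xxx(K)\cup F_{k,\xxx}$, where $F_{k,\xxx}$ is the union of level-$k$ cells (not under $\xxx$) that touch $S_\xxx(K)$. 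The key property is that $K\setminus T_{k,\xxx}$ has \emph{positive Euclidean distance} from $S_\xxx(K)$, so $\iota(Z_\infty)\in S_\xxx(K)$ genuinely forces $\iota(Z_n)\in T_{k,\xxx}$ eventually, and Fatou applies legitimately to give $\nu(S_\xxx(K)\cap S_{\xxx'}(K))\le\mu(T_{k,\xxx}\cap T_{k,\xxx'})$. Since $\mu(T_{k,\xxx}\cap T_{k,\xxx'})\downarrow\mu(S_\xxx(K)\cap S_{\xxx'}(K))=0$ as $k\to\infty$ by the OSC, the lemma follows. This neighborhood construction is the missing idea in your sketch.
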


\begin {proof}  For $\xxx \in {\mathcal J}_m$, and $k>m$, we let
$$
{\mathcal F}_{k, \xxx} = \{\yyy\uuu \in {\mathcal J}_k:\ \yyy \in {\mathcal J}_m \setminus \{\xxx\},\  \uuu \in \Sigma^*,  \ S_{\yyy\uuu}(K) \cap S_{\xxx}(K)\neq \emptyset\}
$$
Let $F_{k,\xxx} = \bigcup_{\www \in {\mathcal F}_{k, \xxx} }S_{\www}(K)$. Then $F_{k,\xxx}$ is a decreasing sequence of sets in $k$ and the limit set is $\bigcup_{\yyy\in {\mathcal J}_m\setminus \{\xxx\}} (S_\yyy(K) \cap S_\xxx(K))$, which is a $\mu$-null set by the OSC.  This implies that $ \lim_{k\to \infty} \mu (F_{k,\xxx}) =0$.

\vspace {0.1cm}
Let  $T_{k, \xxx} = S_{\xxx}(K) \bigcup F_{k, \xxx}$. By a similar argument as in Proposition \ref {th5.4},  we can show that
\begin{equation} \label {eq5.6}
\mathbb{P}\big (\iota(Z_{\tau_\ell} ) \in T_{k,\xxx} \cap T_{k, \yyy} \big ) =\mu (T_{k,\xxx} \cap T_{k, \yyy})\qquad  \hbox {for} \ \ell \geq m.
\end{equation}
Observe that $K \setminus T_{k, \xxx}$ has positive Euclidian distance to $S_{\xxx}(K)$. Hence $\iota (Z_{\infty}) \in S_{\xxx}(K)$ implies that $\iota (Z_n)$ is eventually in  ${T_{k,\xxx}}$, which means $\lim_{n \to \infty}
\chi_{T_{k, \xxx}}(\iota(Z_n)) = 1$.

\vspace {0.1cm}
 Now for two distinct $\xxx, \xxx' \in {\mathcal J}_m$, if $S_\xxx (K) \cap S_{\xxx'} (K) = \emptyset$, then the lemma is obviously true. Hence assume that $S_\xxx (K) \cap S_{\xxx'} (K) \neq \emptyset$. Let $T_{k, \xxx}$ and  $T_{k, {\xxx'}}$ be defined as the above. Then
\begin{align} \label {eq5.7}
\nu(S_{\xxx}(K) \cap S_{\xxx'}(K))
&= \mathbb{P} (\iota (Z_{\infty}) \in S_{\xxx}(K)\cap S_{\xxx'}(K))
\nonumber \\
&\leq \mathbb{E} \left(\liminf_{n \to \infty}~\chi_{T_{k, \xxx} \cap T_{k, \xxx'} }(\iota(Z_n)) \right)\nonumber\\
&\leq \liminf_{n \to \infty} ~\mathbb{E} \big( \chi_{T_{k, \xxx} \cap T_{k, \xxx'}}(\iota(Z_n)) \big)  \qquad \ \hbox {(by Fatou's lemma)}  \nonumber \\
%&\leq \liminf_{\ell \to \infty} ~\mathbb{E} \big ( \chi_{T_{k, \xxx}\cap T_{k, \xxx'} }(\iota(Z_{\tau_\ell})) \big) \qquad  \hbox {(as subsequence)} \\
&\leq \liminf_{\ell\to \infty} ~\mathbb{P} (\iota(Z_{\tau_\ell}) \in T_{k, \xxx}\cap T_{k, \xxx'}) \\
& = \mu(T_{k, \xxx}\cap T_{k, \xxx'})  \qquad \qquad \qquad
 \quad ~~~\hbox {(by \eqref{eq5.6})}.  \nonumber
\end{align}
By the limit in the first paragraph, we have  $\nu(S_{\xxx}(K)\cap S_{\xxx'}(K) ) \leq \lim_{k\to \infty} \mu(T_{k, \xxx}\cap T_{k, \xxx'})=
 \mu((S_{\xxx}(K)\cap S_{\xxx'}(K)) = 0$, and this completes the proof.
\end{proof}

\medskip

\begin{theorem} \label{th5.6}
Let $\{S_i\}_{i=1}^N$ be an IFS with the OSC, and let $K$ be the self-similar set.
Suppose $\{Z_n\}_{n=0}^\infty$ is a $\lambda$-QNRW on the augmented tree $(X, \mathfrak E)$ associated with a set of probability weights $\{p_i\}_{i=1}^N$. Then the hitting distribution $\nu_{\vartheta}$ is the self-similar measure $\mu$ with weights $\{p_i\}_{i=1}^N$ on $K$.
\end{theorem}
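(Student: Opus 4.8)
The plan is to identify the measure $\nu = \nu_\vartheta \circ \iota^{-1}$ with the self-similar measure $\mu$ by showing that $\nu(S_\xxx(K)) = p_\xxx = \mu(S_\xxx(K))$ for every $\xxx \in X$, and then invoke the uniqueness of the self-similar measure (or, equivalently, the fact that the cylinder sets $S_\xxx(K)$ generate the Borel $\sigma$-algebra of $K$ and that these values determine a Borel measure uniquely since overlaps are $\mu$-null by Lemma~\ref{th5.5}). The starting point is Proposition~\ref{th5.4}, which gives $\mathbb P(\iota(Z_{\tau_\ell}) \in S_\xxx(K)) = \mu(S_\xxx(K))$ for all $\ell \geq m = |\xxx|$; the only gap is to pass from the hitting times $\tau_\ell$ to the limit $Z_\infty$.

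The key step is the limiting argument. Fix $\xxx \in \mathcal J_m$. Using the sets $T_{k,\xxx} = S_\xxx(K) \cup F_{k,\xxx}$ introduced in the proof of Lemma~\ref{th5.5}, I would argue as follows. On the one hand, since $K \setminus T_{k,\xxx}$ has positive Euclidean distance from $S_\xxx(K)$, the event $\{\iota(Z_\infty) \in S_\xxx(K)\}$ forces $\iota(Z_n)$ to lie eventually in $T_{k,\xxx}$, so $\chi_{S_\xxx(K)}(\iota(Z_\infty)) \leq \liminf_n \chi_{T_{k,\xxx}}(\iota(Z_n))$. Applying Fatou's lemma and then the inequality $\mathbb E(\chi_{T_{k,\xxx}}(\iota(Z_n))) \leq \liminf_{\ell} \mathbb P(\iota(Z_{\tau_\ell}) \in T_{k,\xxx})$ (as in \eqref{eq5.7}, using that $\tau_\ell$ are the successive level-hitting times and $T_{k,\xxx}$ is a union of cells), together with the analogue of \eqref{eq5.6}, yields $\nu(S_\xxx(K)) \leq \mu(T_{k,\xxx})$. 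Letting $k \to \infty$ and using $\mu(F_{k,\xxx}) \to 0$ gives $\nu(S_\xxx(K)) \leq \mu(S_\xxx(K))$. On the other hand, summing the reverse-type inequalities over the finitely many $\yyy \in \mathcal J_m$, or more directly using $\sum_{\yyy \in \mathcal J_m} \nu(S_\yyy(K)) \geq \nu(K) = 1 = \sum_{\yyy \in \mathcal J_m} \mu(S_\yyy(K))$ combined with the upper bounds just established, forces equality $\nu(S_\xxx(K)) = \mu(S_\xxx(K))$ for every $\xxx$.

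With $\nu(S_\xxx(K)) = \mu(S_\xxx(K))$ in hand for all $\xxx \in X$ (not just $\xxx \in \mathcal J_m$ for one fixed $m$ — one runs the argument at every level), I would conclude by noting that $\mu$ satisfies the self-similar identity \eqref{eq3.1} with weights $\{p_i\}$ and that any Borel probability measure on $K$ assigning mass $p_\xxx$ to each cell $S_\xxx(K)$ must coincide with $\mu$: indeed the finite unions of cells form an algebra generating the Borel $\sigma$-algebra, overlaps $S_\xxx(K) \cap S_\yyy(K)$ with $|\xxx|=|\yyy|$, $\xxx \neq \yyy$ are $\nu$-null by Lemma~\ref{th5.5} and $\mu$-null by the OSC, so both measures are determined on the algebra, and a standard $\pi$–$\lambda$ (monotone class) argument extends the agreement to all Borel sets. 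Hence $\nu_\vartheta \circ \iota^{-1} = \mu$, i.e. the hitting distribution is $\mu$.

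The main obstacle is the passage from $\tau_\ell$ to $\infty$: one cannot simply take $\ell \to \infty$ in Proposition~\ref{th5.4}, because $\chi_{S_\xxx(K)}$ is not continuous on $K$ and $\iota(Z_{\tau_\ell})$ need not converge to $\iota(Z_\infty)$ along the subsequence of level-hitting times in a way that passes through the discontinuous indicator (this is exactly the subtlety flagged before Lemma~\ref{th5.5}, citing \cite{BJ}). The device that resolves it is the enlargement of $S_\xxx(K)$ to the open-ish neighborhood $T_{k,\xxx}$, whose boundary effect is controlled by the OSC through $\mu(F_{k,\xxx}) \to 0$; getting the one-sided Fatou estimate to run cleanly, and checking the cell-decomposition identity \eqref{eq5.6} for $T_{k,\xxx}$, is where the real work lies, but all the needed pieces are already assembled in Proposition~\ref{th5.4} and Lemma~\ref{th5.5}.
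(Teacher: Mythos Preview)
Your proposal is correct and follows essentially the same strategy as the paper: obtain the one-sided bound $\nu(S_\xxx(K)) \leq \mu(S_\xxx(K))$ via a Fatou argument, then sum over $\xxx \in \mathcal J_m$ to force equality. The only cosmetic difference is that you approximate $S_\xxx(K)$ from \emph{outside} by the sets $T_{k,\xxx}$ of Lemma~\ref{th5.5} and then let $k\to\infty$, whereas the paper approximates from \emph{inside} by the relatively open ``interior'' $U_\xxx = S_\xxx(K)\setminus\bigcup_{\yyy\neq\xxx}S_\yyy(K)$, using Lemma~\ref{th5.5} only to identify $\nu(S_\xxx(K))=\nu(U_\xxx)$; the inner route avoids the extra limit in $k$ but both are equally valid.
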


\medskip

\begin{proof}
To prove the theorem, it suffices to show that
$\nu(S_{\xxx} (K)) = p_\xxx$ for $m \geq 1$ and $\xxx \in \mathcal J_m$.
Then $\nu$ is the unique self-similar measure with weights $\{p_i\}_{i=1}^N$.
To this end, we fix $m \geq 1$ and $\xxx \in \mathcal J_m$.
Let
$$
U_{\xxx} = S_{\xxx}(K) \setminus \bigcup_{\yyy \in \mathcal J_m: \yyy \neq \xxx} S_{\yyy}(K)
$$
be the `interior' of the $m$-cell $S_{\xxx}(K)$. Then any sequence of points in $K$ that has a limit in $U_{\xxx}$ must be in $U_{\xxx}$ eventually. Therefore $\iota (Z_{\infty}) \in U_{\xxx}$ implies
$\lim_{n \to \infty} \chi_{U_{\xxx}} (\iota(Z_n)) = 1.$
Hence by using Fatou's lemma as in Lemma \ref{th5.4}, we have
\begin{align*}
\nu(S_{\xxx}(K)) & = \nu(U_{\xxx}) \qquad  \qquad \qquad \qquad \quad \hbox {(by Lemma \ref{th5.5})}\\
%&= \mathbb{P} (\iota (Z_{\infty}) \in U_{\xxx}(K)) \\
&\leq \liminf_{\ell \to \infty} ~\mathbb{P} (\iota(Z_{\tau_\ell}) \in U_{\xxx}) \quad ~~\hbox {(as in \eqref{eq5.7})} \\
&\leq \liminf_{\ell \to \infty} ~\mathbb{P} (\iota(Z_{\tau_\ell}) \in S_{\xxx}(K)) \\
&= \mu(S_\xxx(K)) = p_\xxx  \qquad \qquad \quad \hbox {(by Proposition \ref{th5.4})}.
\end{align*}
 Summing the above inequality over $\xxx \in \mathcal J_m$, we have $1 = \nu(K) \leq \sum_{\xxx \in \mathcal J_m} \nu(S_\xxx(K)) \leq \sum_{\xxx \in \mathcal J_m} p_\xxx = 1$. Thus $\nu(S_{\xxx} (K)) = p_\xxx$ for $\xxx \in \mathcal J_m$. This completes the proof of the theorem.
\end{proof}

\bigskip

\section{Estimation of the Na\"{i}m kernel}
\label{sec:6}

Let $P$ be a transient reversible random walk on a countable set $X$.
For $\xxx,\yyy \in X$, we define the $\Theta$-kernel by
\begin{equation*}
\Theta(\xxx,\yyy) = \dfrac{K(\xxx,\yyy)}{G(\xxx,\vartheta)} = \dfrac{F(\xxx,\yyy)}{F(\xxx,\vartheta)G(\vartheta,\vartheta)F(\vartheta,\yyy)}.
\end{equation*}
This kernel is clearly positive and symmetric (by $m(\xxx)G(\xxx,\yyy)=m(\yyy)G(\yyy,\xxx)$) on $X \times X$, and can be extended to  $\Theta (\xxx, \eta)$ on  $X \times \widehat{X}$ continuously (as $K(\xxx, \yyy)$ does), but it is more difficult to extend to $\widehat{X} \times \widehat{X}$. We first recall the extension of $\Theta (\xi, \eta)$ on $\widehat{X} \times \widehat{X}$ in \cite{Si}. For $\xi \in \mathcal{M}$, we define the {\it $\xi$-process}  by setting the transition probability
\begin{equation} \label{eq6.1}
P^{\xi}(\xxx,\yyy) = \dfrac{P(\xxx,\yyy)K(\yyy,\xi)}{K(\xxx,\xi)}.
\end{equation}
(It is also known as the {\it $h$-transform} of $P$ \cite{Dy}, by taking the harmonic function $h = K(\cdot, \xi)$ on $X$.) Clearly $P^{\xi}$ is a transition probability since $K(\cdot, \xi)$ is harmonic on $X$. For the $\xi$-process, we denote the corresponding ever-visiting probability and Green function by $F^\xi(\cdot,\cdot)$ and $G^\xi(\cdot,\cdot)$ respectively.
The following lemma can be found in \cite[Corollary 7.51]{Wo2}.

\medskip

\begin{lemma} \label{th6.1}
For $\xi \in \mathcal{M}$, let $\nu_\vartheta^\xi$ be the hitting distribution of the $\xi$-process. Then $\xi \in \mathcal{M}_{\min}$ if and only if
$\nu_\vartheta^\xi = \delta_{\xi}$, the point mass at $\xi$.
\end{lemma}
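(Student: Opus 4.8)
The plan is to prove both directions by unwinding the definitions of the $\xi$-process and its hitting distribution, and by invoking the standard characterization of minimal harmonic functions in terms of $h$-transforms. First I would recall the basic identity: for the $h$-transform with $h = K(\cdot,\xi)$, the Green function satisfies $G^\xi(\xxx,\yyy) = G(\xxx,\yyy)K(\yyy,\xi)/K(\xxx,\xi)$, and consequently $F^\xi(\xxx,\yyy) = F(\xxx,\yyy)K(\yyy,\xi)/K(\xxx,\xi)$. The key structural fact is that the nonnegative $P$-harmonic functions dominated by $h = K(\cdot,\xi)$ correspond, via division by $h$, to the bounded $P^\xi$-harmonic functions; this is the classical correspondence in the potential theory of $h$-processes (see \cite{Dy,Wo2}). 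Under the integral representation of harmonic functions over the minimal Martin boundary, a bounded $P^\xi$-harmonic function $u$ is represented as $u(\xxx) = \int_{\mathcal M_{\min}} K^\xi(\xxx,\zeta)\, d\mu_u(\zeta)$ for a finite signed (here nonnegative, for $u\ge 0$) measure $\mu_u$, where $K^\xi$ is the Martin kernel of the $\xi$-process.

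The forward direction ($\xi \in \mathcal M_{\min} \Rightarrow \nu_\vartheta^\xi = \delta_\xi$): if $K(\cdot,\xi)$ is minimal, then every nonnegative $P$-harmonic $h' \le K(\cdot,\xi)$ is a constant multiple of $K(\cdot,\xi)$; dividing by $K(\cdot,\xi)$, every bounded nonnegative $P^\xi$-harmonic function is constant, i.e., the $\xi$-process has a trivial bounded harmonic function space, hence a trivial Poisson boundary. Since the hitting distribution $\nu_\vartheta^\xi$ lives on $\mathcal M_{\min}^\xi$ and triviality of the Poisson boundary forces $\nu_\vartheta^\xi$ to be a point mass, it remains to identify that point as $\xi$ itself. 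For this I would note that under $P^\xi$ the chain $\{Z_n\}$ still converges $\mathbb P_\vartheta^\xi$-a.s. in the Martin compactification $\widehat X$, and that $\mathbb P_\vartheta^\xi$-a.s.\ the limit is $\xi$: indeed, for any neighborhood $U$ of $\xi$ in $\widehat X$ one checks, using the Radon--Nikodym relation $d\mathbb P_\vartheta^\xi / d\mathbb P_\vartheta = K(Z_n,\xi)/K(\vartheta,\xi)$ restricted to $\{Z_0,\dots,Z_n\}$ and the fact that $K(\xxx,\xi)$ is large precisely when $\xxx$ is close to $\xi$, that the chain is eventually trapped in $U$; hence $\nu_\vartheta^\xi(\{\xi\}) = 1$.

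The reverse direction ($\nu_\vartheta^\xi = \delta_\xi \Rightarrow \xi \in \mathcal M_{\min}$): suppose $h' $ is a nonnegative $P$-harmonic function with $h' \le K(\cdot,\xi)$. Then $v := h'/K(\cdot,\xi)$ is a bounded nonnegative $P^\xi$-harmonic function with $v(\vartheta) = h'(\vartheta)$. By the Poisson representation for the $\xi$-process, $v(\xxx) = \mathbb E_\xxx^\xi[\,\tilde v(Z_\infty)\,]$ for some bounded Borel $\tilde v$ on the Martin boundary; but since $\nu_\vartheta^\xi = \delta_\xi$ and (by the analogous argument, or by absolute continuity $d\nu_\xxx^\xi/d\nu_\vartheta^\xi = K^\xi(\xxx,\cdot)$) every $\nu_\xxx^\xi$ is also $\delta_\xi$, the representation collapses: $v(\xxx) = \tilde v(\xi)$ is constant. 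Hence $h' = \tilde v(\xi)\, K(\cdot,\xi)$, which is the minimality of $K(\cdot,\xi)$. The main obstacle I anticipate is the a.s.\ convergence statement $Z_\infty = \xi$ under $\mathbb P_\vartheta^\xi$ and the interchange of this with the trivial-Poisson-boundary conclusion — i.e., making rigorous that a trivial space of bounded harmonic functions for the $\xi$-process forces its exit measure to be a single atom, and that this atom is $\xi$; this is exactly the content packaged in \cite[Corollary 7.51]{Wo2}, so in the paper it is legitimate to cite that result, but a self-contained argument would need the above identification step carried out with care.
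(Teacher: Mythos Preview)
The paper does not supply its own proof of Lemma~\ref{th6.1}; it simply records that the result ``can be found in \cite[Corollary 7.51]{Wo2}'' and moves on. Your proposal is therefore not competing with an argument in the paper but rather reconstructing the classical proof that the paper chose to cite, and in outline it is correct and is essentially the standard argument from \cite{Dy,Wo2}: the bijection between nonnegative $P$-harmonic functions dominated by $K(\cdot,\xi)$ and bounded nonnegative $P^\xi$-harmonic functions, together with the Poisson representation for the $\xi$-process, gives both directions.

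One point that could use tightening: in the reverse direction you invoke ``the Poisson representation for the $\xi$-process'' and then evaluate it using $\nu_\xxx^\xi$, but you should be explicit about which compactification this representation lives on. The hitting distribution $\nu_\vartheta^\xi$ in the lemma is the law of the limit of the $\xi$-process in the \emph{original} Martin compactification $\widehat X$, not in the (a priori different) Martin compactification of $P^\xi$. The clean way around this, which is how the proof in \cite{Wo2} is organized, is to use the general fact that for any positive harmonic $h$ with representing measure $\mu^h$ on $\mathcal M_{\min}$ (so $h=\int K(\cdot,\zeta)\,d\mu^h(\zeta)$), the $h$-process converges $\mathbb P_\vartheta^h$-a.s.\ in $\widehat X$ and its hitting distribution equals $\mu^h$. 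Minimality of $K(\cdot,\xi)$ is then literally the statement $\mu^{K(\cdot,\xi)}=\delta_\xi$, which is $\nu_\vartheta^\xi=\delta_\xi$. This avoids mixing the two boundaries and makes the forward direction's ``the atom is at $\xi$'' step automatic rather than requiring the ad hoc neighborhood-trapping argument you sketch.
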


For $m \geq 0$, let $\tau_m^*$ be the last visit time of $X_m = \bigcup_{k=0}^m \mathcal{J}_k$ by $\{Z_n\}_{n=0}^\infty$.  Note that $\tau_m^*$ is not a stopping time. For $\xi \in K$ and $\zzz \in X$, let
\begin{equation*}
\ell_m^\xi(\zzz) = \mathbb{P}_{\vartheta}^\xi (Z_{\tau_m^*}=\zzz),
\end{equation*}
where $\mathbb{P}_{\vartheta}^\xi$ is the probability corresponding to the $\xi$-process. The following lemma is from \cite[Lemma 1.9]{Si}.

\medskip

\begin{lemma} \label{th6.2} Let $\xi, \eta \in  \mathcal{M}$. Then the sum
$
\sum\limits_{\zzz \in X} \ell_m^\xi(\zzz) \Theta(\zzz,\eta)
$
is increasing in $m$.
\end{lemma}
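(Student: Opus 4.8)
The plan is to pass from $\ell_m^\xi$ and $\Theta$ to an object for which the monotonicity in $m$ is transparent, namely the equilibrium measure of $X_m$ for the $h$-transformed chain. First I would record the last-exit decomposition for the $\xi$-process. Writing $\hat q_m(\zzz):=\mathbb P_\zzz^\xi(Z_n\notin X_m\text{ for all }n\ge1)$ and using that $X_m$ is finite while the $\xi$-process is transient, one has $\ell_m^\xi(\zzz)=G^\xi(\vartheta,\zzz)\,\hat q_m(\zzz)$ for $\zzz\in X_m$ and $\ell_m^\xi\equiv 0$ off $X_m$, where $G^\xi(\vartheta,\zzz)=K(\zzz,\xi)G(\vartheta,\zzz)$ since $K(\vartheta,\xi)=1$. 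The $\xi$-process is itself reversible, with conductances $c^\xi(\xxx,\yyy)=c(\xxx,\yyy)K(\xxx,\xi)K(\yyy,\xi)$ and reversing measure $m^\xi(\zzz)=m(\zzz)K(\zzz,\xi)^2$. Combining this with $m(\zzz)G(\zzz,\vartheta)=m(\vartheta)G(\vartheta,\zzz)$ and $\Theta(\zzz,\eta)=K(\zzz,\eta)/G(\zzz,\vartheta)$, and cancelling $K(\vartheta,\xi)=K(\vartheta,\eta)=1$, gives
$$\ell_m^\xi(\zzz)\,\Theta(\zzz,\eta)=\frac1{m(\vartheta)}\,m^\xi(\zzz)\,\phi(\zzz)\,\hat q_m(\zzz),\qquad \phi:=\frac{K(\cdot,\eta)}{K(\cdot,\xi)},$$
where $\phi>0$ is $P^\xi$-harmonic with $\phi(\vartheta)=1$. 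Hence $\sum_{\zzz}\ell_m^\xi(\zzz)\Theta(\zzz,\eta)=\frac1{m(\vartheta)}\sum_{\zzz\in X_m}\phi(\zzz)\epsilon_m(\zzz)$, where $\epsilon_m(\zzz):=m^\xi(\zzz)\hat q_m(\zzz)$ ($\zzz\in X_m$) is the equilibrium (capacitary) measure of the finite set $X_m$ for the $\xi$-network; write $\langle\phi,\epsilon_m\rangle$ for this sum.

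Next I would invoke the balayage identity for equilibrium measures: since $X_m\subset X_{m+1}$, one has $\epsilon_m=\epsilon_{m+1}\Pi_m$, where $\Pi_m(\www,\zzz)=\mathbb P_\www^\xi(\sigma_{X_m}<\infty,\ Z_{\sigma_{X_m}}=\zzz)$ is the hitting kernel of $X_m$, with $\sigma_{X_m}=\inf\{n\ge0:Z_n\in X_m\}$ (so $\Pi_m(\www,\cdot)=\delta_\www$ for $\www\in X_m$). This is standard; to be self-contained it follows from the last-exit formula $\mathbb P_\www^\xi(\sigma_A<\infty)=\sum_{\zzz\in A}G^\xi(\www,\zzz)\epsilon_A(\zzz)/m^\xi(\zzz)$ together with uniqueness of the equilibrium measure (the Green energy is positive definite). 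Consequently
$$\langle\phi,\epsilon_m\rangle=\langle\phi,\epsilon_{m+1}\Pi_m\rangle=\langle\Pi_m\phi,\epsilon_{m+1}\rangle,\qquad (\Pi_m\phi)(\www)=\mathbb E_\www^\xi\!\big[\phi(Z_{\sigma_{X_m}});\,\sigma_{X_m}<\infty\big].$$
Because $\phi\ge0$ is $P^\xi$-harmonic, $(\phi(Z_n))_n$ is a nonnegative $\mathbb P^\xi$-martingale, so the stopped process $\phi(Z_{n\wedge\sigma_{X_m}})$ is a nonnegative martingale and Fatou's lemma yields $\mathbb E_\www^\xi[\phi(Z_{\sigma_{X_m}});\sigma_{X_m}<\infty]\le\phi(\www)$ for every $\www$, i.e. $\Pi_m\phi\le\phi$. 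Since $\epsilon_{m+1}\ge0$ we get $\langle\phi,\epsilon_m\rangle=\langle\Pi_m\phi,\epsilon_{m+1}\rangle\le\langle\phi,\epsilon_{m+1}\rangle$, which is exactly $\sum_\zzz\ell_m^\xi(\zzz)\Theta(\zzz,\eta)\le\sum_\zzz\ell_{m+1}^\xi(\zzz)\Theta(\zzz,\eta)$.

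I expect the point to get right is the balayage step $\epsilon_m=\epsilon_{m+1}\Pi_m$: it is the discrete analogue of sweeping the capacitary measure of a larger set onto a smaller one, and once it is available the monotonicity is a one-line consequence of the super-mean-value inequality for positive harmonic functions. A pleasant feature of this route is that every sum that occurs runs over the finite sets $X_m$, $X_{m+1}$, so no delicate convergence (of the $\xi$-energy of $\phi$, say) ever enters — which is the pitfall one hits if one instead tries to write $\langle\phi,\epsilon_m\rangle$ as a Dirichlet form $\mathcal E^\xi(\phi,g_{X_m})$ and integrate by parts. An essentially equivalent, more hands-on alternative is to compute directly that $m(\vartheta)\big(S_{m+1}-S_m\big)=\sum_{\www\in\mathcal J_{m+1}}\big(m^\xi(\www)\phi(\www)\hat q_{m+1}(\www)-c^\xi(\www,\www^-)\phi(\www^-)\hat q_m(\www)\big)$, using the recursions $m^\xi(\zzz)\hat q_m(\zzz)=\sum_{\yyy\notin X_m}c^\xi(\zzz,\yyy)\hat q_m(\yyy)$ and the harmonicity of $\phi$, and then to re-sum this expression via the same martingale/hitting bound; but the equilibrium-measure formulation is cleaner, and of course the whole statement is due to Silverstein \cite{Si}, whose argument may simply be cited.
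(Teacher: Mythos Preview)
The paper does not prove this lemma at all: it simply cites \cite[Lemma~1.9]{Si}. Your proposal therefore goes well beyond what the paper does, supplying a self-contained argument where the paper is content to quote Silverstein --- and you even note this yourself at the end.

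Your route is sound. The key computation
\[
\ell_m^\xi(\zzz)\,\Theta(\zzz,\eta)=\frac{1}{m(\vartheta)}\,m^\xi(\zzz)\,\phi(\zzz)\,\hat q_m(\zzz),\qquad \phi=\frac{K(\cdot,\eta)}{K(\cdot,\xi)},
\]
is correct (the reversibility identity $m(\vartheta)G(\vartheta,\zzz)=m(\zzz)G(\zzz,\vartheta)$ is exactly what converts $G(\vartheta,\zzz)/G(\zzz,\vartheta)$ into $m(\zzz)/m(\vartheta)$), and $\phi$ is indeed $P^\xi$-harmonic since the $K(\cdot,\xi)$ factors cancel and $K(\cdot,\eta)$ is $P$-harmonic. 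The balayage step $\epsilon_m=\epsilon_{m+1}\Pi_m$ for nested finite sets $X_m\subset X_{m+1}$ is a standard fact of discrete potential theory for transient reversible chains, and your justification via the last-exit formula and positive-definiteness of the Green energy is the right one; after that, the supermartingale bound $\Pi_m\phi\le\phi$ is immediate from Fatou as you say. The only cosmetic point is that in this paper's setting all $\epsilon_{m+1}$-mass already sits on $X_{m+1}\supset X_m$, so $\Pi_m(\www,\cdot)=\delta_\www$ on $X_m$ and the inequality $\Pi_m\phi\le\phi$ is only ``used'' on $\mathcal J_{m+1}$; this does not affect the argument. Since the paper itself offers nothing to compare with beyond the citation, your write-up is a genuine addition rather than a variant.
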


In view of Lemmas \ref{th6.1} and \ref{th6.2}, Silverstein \cite{Si} extended the Na\"{i}m kernel on ${\mathcal M} \times {\mathcal M}$  by
$\Theta(\xi, \eta) = \lim_{k \to \infty} {\sum}_{\zzz \in X}~\ell_k^\xi(\zzz) \Theta(\zzz, \eta)$ for $\xi, \eta \in \mathcal{M}$.
We will apply this to the $\lambda$-NRW on the augmented tree. Since the walk satisfies $|Z_{n+1}|-|Z_n| \in \{0,\pm 1\}$, the identity  reduces to
\begin{equation} \label{eq6.3}
 \Theta(\xi, \eta) = \lim_{k \to \infty} {\sum}_{\zzz \in {\mathcal J}_k}\ell_k^\xi(\zzz) \Theta(\zzz, \eta).
\end{equation}
Here is the main theorem of this section.

\medskip

\begin{theorem} \label{th6.4}
Let $\{S_i\}_{i=1}^N$ be an IFS that satisfies the OSC, and let $K$ be the self-similar set. Let $\{Z_n\}_{n=0}^{\infty}$ be a $\lambda$-NRW  on the augmented tree $(X, {\mathfrak E})$.  Then
\begin{align*}
 \Theta(\xi, \eta) \asymp (\lambda r^\alpha)^{-(\xi|\eta)}, \quad  \xi, \eta \in \partial_H{X}, \ \xi \neq \eta,
\end{align*}
where $\alpha = \dim_HK$. Consequently, by Theorem \ref{th3.6}, we have
$$
\Theta(\xi, \eta) \asymp |\xi - \eta|^{-(\alpha+\beta)}, \quad \xi, \eta \in K, \ \xi \neq \eta,
$$
with $\beta = \frac{\log \lambda}{\log r}$.
\end{theorem}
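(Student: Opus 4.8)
The plan is to feed the pointwise kernel estimates of Sections~\ref{sec:4} and~\ref{sec:5} into Silverstein's identity \eqref{eq6.3}, $\Theta(\xi,\eta)=\lim_{k\to\infty}\sum_{\zzz\in\mathcal J_k}\ell_k^\xi(\zzz)\,\Theta(\zzz,\eta)$, and to estimate the sum by decomposing $\mathcal J_k$ according to the value of the Gromov product $(\zzz|\xi)$. The second (metric) assertion is then immediate from the H\"older equivalence in Theorem~\ref{th3.6}.

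First I would record the pointwise form of the $\Theta$-kernel. From $\Theta(\xxx,\yyy)=F(\xxx,\yyy)/(F(\xxx,\vartheta)G(\vartheta,\vartheta)F(\vartheta,\yyy))$ together with Proposition~\ref{th4.1} ($F(\xxx,\vartheta)=\lambda^{|\xxx|}$), Theorem~\ref{th4.6} ($F(\vartheta,\xxx)\asymp p_\xxx\asymp r^{\alpha|\xxx|}$), Lemma~\ref{th4.2} ($G(\vartheta,\vartheta)=(1-\lambda)^{-1}$) and Theorem~\ref{th5.3}, one obtains $\Theta(\xxx,\yyy)\asymp(\lambda r^\alpha)^{-(\xxx|\yyy)}$ for $\xxx,\yyy\in X$; using the boundary form of Theorem~\ref{th5.3} and the extended Gromov product \eqref{eq3.4}, $\Theta(\zzz,\eta)\asymp(\lambda r^\alpha)^{-(\zzz|\eta)}$ for $\zzz\in X$ and $\eta\in\partial_H X$. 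Next comes a purely geometric step: for $\zzz\in\mathcal J_k$ and $\xi\neq\eta$ one has $(\zzz|\eta)=\min\{(\zzz|\xi),(\xi|\eta)\}+O(1)$, the lower bound being the $\delta$-hyperbolic inequality \eqref{eq2.1} extended to $\partial_H X$ (Corollary~\ref{th3.9}), and the matching upper bound holding whenever $(\zzz|\xi)>(\xi|\eta)+\delta$. Hence $\Theta(\zzz,\eta)\asymp(\lambda r^\alpha)^{-\min\{(\zzz|\xi),(\xi|\eta)\}}$.

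For the weights I would use the $h$-transform identity $G^\xi(\vartheta,\zzz)=G(\vartheta,\zzz)K(\zzz,\xi)$ and $\ell_k^\xi(\zzz)\le G^\xi(\vartheta,\zzz)$, combined with $G(\vartheta,\zzz)\asymp r^{\alpha k}$ (from reversibility $m(\vartheta)G(\vartheta,\zzz)=m(\zzz)G(\zzz,\vartheta)$, $m(\zzz)\asymp c(\zzz,\zzz^-)\asymp p_\zzz\lambda^{-k}$ by Proposition~\ref{th4.5}, $G(\zzz,\vartheta)=F(\zzz,\vartheta)G(\vartheta,\vartheta)$ and Proposition~\ref{th4.1}) and the Martin kernel bound, to get $\ell_k^\xi(\zzz)\lesssim(\lambda r^\alpha)^{k-(\zzz|\xi)}$. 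Grouping $\mathcal J_k$ into the sets $A_j=\{\zzz\in\mathcal J_k:j\le(\zzz|\xi)<j+1\}$ and counting the level-$k$ descendants of the boundedly many (bounded degree) level-$j$ vertices horizontally near the $\xi$-ray, Lemma~\ref{th3.0} gives $\#A_j\asymp r^{-\alpha(k-j)}$, hence $\pi_j:=\sum_{\zzz\in A_j}\ell_k^\xi(\zzz)\lesssim\lambda^{k-j}$, while $\sum_j\pi_j=1$. Splitting Silverstein's sum at $j=(\xi|\eta)$: the terms with $j\ge(\xi|\eta)$ contribute $\le(\lambda r^\alpha)^{-(\xi|\eta)}\sum_{j\ge(\xi|\eta)}\pi_j\le(\lambda r^\alpha)^{-(\xi|\eta)}$, and the terms with $j<(\xi|\eta)$ contribute $\lesssim\sum_{j<(\xi|\eta)}\lambda^{k-j}(\lambda r^\alpha)^{-j}\lesssim\lambda^{k-(\xi|\eta)}(\lambda r^\alpha)^{-(\xi|\eta)}\le(\lambda r^\alpha)^{-(\xi|\eta)}$ for $k\ge(\xi|\eta)$. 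For the lower bound, since $\mathcal{M}=\mathcal{M}_{\min}$ (Theorem~\ref{th5.1}) the $\xi$-process converges a.s.\ to $\xi$ (Lemma~\ref{th6.1}), so $\sum_{(\zzz|\xi)>(\xi|\eta)+\delta}\ell_k^\xi(\zzz)\to1$ and, restricting the sum to such $\zzz$, it is $\gtrsim(\lambda r^\alpha)^{-(\xi|\eta)}$ for all large $k$. By the monotonicity in Lemma~\ref{th6.2} these two-sided bounds pass to the limit, giving $\Theta(\xi,\eta)\asymp(\lambda r^\alpha)^{-(\xi|\eta)}$; since $(\lambda r^\alpha)^{-(\xi|\eta)}=\varrho_a(\xi,\eta)^{(\log\lambda+\alpha\log r)/a}$, Theorem~\ref{th3.6} turns this into $\Theta(\xi,\eta)\asymp|\xi-\eta|^{-(\alpha+\beta)}$ with $\beta=\log\lambda/\log r$.

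The main obstacle, I expect, is the bookkeeping for the weights: establishing $\ell_k^\xi(\zzz)\lesssim(\lambda r^\alpha)^{k-(\zzz|\xi)}$ and the descendant count $\#A_j\asymp r^{-\alpha(k-j)}$ uniformly in the non-homogeneous $\mathcal J_k$-leveling, and making the inequality $(\zzz|\eta)=\min\{(\zzz|\xi),(\xi|\eta)\}+O(1)$ precise through the canonical geodesics of Section~\ref{sec:3} and Corollary~\ref{th3.9} rather than an abstract hyperbolic-space fact. The appeal to Lemma~\ref{th6.1} (that the $\xi$-process actually converges to $\xi$) is what powers the lower bound and needs to be invoked with care.
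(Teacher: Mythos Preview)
Your lower bound is fine and matches the paper's: Lemma~\ref{th6.1} forces the $\xi$-process to converge to $\xi$, so the last-exit distribution $\ell_k^\xi$ eventually concentrates on $\zzz$ with $(\zzz|\xi)>(\xi|\eta)+\delta$, where indeed $(\zzz|\eta)=(\xi|\eta)+O(1)$ and $\Theta(\zzz,\eta)\asymp(\lambda r^\alpha)^{-(\xi|\eta)}$.

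The upper bound, however, has a real gap. The relation $(\zzz|\eta)=\min\{(\zzz|\xi),(\xi|\eta)\}+O(1)$ is only one-sided in general; it fails exactly for $\zzz$ close to $\eta$. Take $\zzz=\yyy_k$ on the geodesic ray to $\eta$: by Lemma~\ref{th3.8}, $(\zzz|\xi)=(\xxx_k|\yyy_k)=(\xi|\eta)$, so $\zzz$ sits in your block $A_j$ with $j\approx(\xi|\eta)$, yet $(\zzz|\eta)=k$ and $\Theta(\zzz,\eta)\asymp(\lambda r^\alpha)^{-k}$, not $(\lambda r^\alpha)^{-(\xi|\eta)}$. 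Consequently the step ``the terms with $j\ge(\xi|\eta)$ contribute $\le(\lambda r^\alpha)^{-(\xi|\eta)}\sum_j\pi_j$'' is false at $j\approx(\xi|\eta)$. Decomposing only by $(\zzz|\xi)$ loses precisely the mass sitting near $\eta$, and that mass is not negligible.

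The paper avoids separating the two factors. It bounds the product directly,
\[
\ell_k^\xi(\zzz)\,\Theta(\zzz,\eta)\ \lesssim\ \Phi(\zzz):=(\lambda r^\alpha)^{\,k-(\zzz|\xxx_k)-(\zzz|\yyy_k)},
\]
notes from the triangle inequality that every single term obeys $\Phi(\zzz)\le(\lambda r^\alpha)^{-(\xi|\eta)}$, and then controls $\sum_{\zzz\in\mathcal J_k}\Phi(\zzz)$ by a \emph{symmetric} contraction: for $\zzz$ outside $\mathcal F_k(\xxx_k)\cup\mathcal F_k(\yyy_k)$ (far from \emph{both} rays) one has $\Phi(\zzz^{(n)})=(\lambda r^\alpha)^n\Phi(\zzz)$ on descendants, so choosing $n_0$ with $r^{-\alpha}\lambda^{n_0}<1$ gives $\sum_{\mathcal J_{k+n_0}}\Phi\le\delta\sum_{\mathcal J_k}\Phi+C(\lambda r^\alpha)^{-(\xi|\eta)}$ and the bound follows by iteration. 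Your argument can be repaired by further stratifying the borderline blocks $j\in[(\xi|\eta)-\delta,(\xi|\eta)+\delta]$ according to $(\zzz|\eta)$, which introduces a second geometric sum in $\lambda$ --- but this is effectively the paper's symmetric treatment reinstated.
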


\begin{proof}
Let $\xi,\eta \in \partial_HX$, $\xi\neq \eta$. For $\xxx \in X$, Proposition \ref{th4.1} and Theorem \ref{th5.3} imply
\begin{align}\label{eq6.4}
\Theta(\xxx,\eta) =\frac{K(\xxx,\eta)}{F(\xxx,\vartheta)G(\vartheta,\vartheta)} \asymp (\lambda r^\alpha)^{-(\xxx|\eta)}.
\end{align}
We fix two geodesic rays $(\xxx_n)_{n=0}^\infty$ and $(\yyy_n)_{n=0}^\infty$ starting from $\vartheta$ such that $(\xxx_n)$ tends to $\xi$, $(\yyy_n)$ tends to $\eta$, and $(\xi|\eta) = \lim_{n\to \infty} (\xxx_n|\yyy_n)$.

\medskip

We first estimate $\Theta (\xi, \eta)$ from below.
Following Lemma \ref{th3.8}(iii), there exists a positive integer $\ell$ such that $(\xi| \eta) = (\xxx_n|\yyy_n)$ for all $n \geq \ell$.
Let $T_\xi=S_{\xxx_\ell}(K) \cup (\bigcup_{\yyy \sim_h \xxx_\ell}S_\yyy(K))$. For $\zzz \in \iota^{-1}(T_\xi)$ with $|\zzz| \geq \ell$, let $\zzz_\ell \in {\mathcal J}_\ell \cap \pi(\vartheta, \zzz)$. Then $d(\zzz_\ell, \xxx_\ell) =0$ or $1$.  By Lemma \ref{th3.8}(ii), (iii) and the triangle inequality (see \eqref{eq3.5'}), we have
\begin{align} \label{eq6.5}
(\zzz|\eta) \geq (\zzz_\ell|\yyy_\ell)
%= (\zzz_{m_0}|\yyy_{m_0})\nonumber\\
%& \geq (\xxx_{m_0}|\yyy_{m_0})-\dfrac{d(\xxx_{m_0}, \zzz_{m_0})}{2} \nonumber\\
\geq (\xxx_\ell|\yyy_\ell) - \dfrac{1}{2} = (\xi|\eta)-\dfrac{1}{2}.
\end{align}
As $K$ is identified with ${\mathcal M}$ (Theorem \ref{th5.1}), Lemma \ref{th6.1} implies that $\iota(Z_n)$ converges to $\xi$ $\mathbb P_\vartheta^\xi$-almost surely.
Let ${t}_\xi = \sup \{n \geq \ell: \iota(Z_n) \notin T_\xi\}+1$, i.e., the first time that $Z_n$ enters $\iota^{-1}(T_\xi)$ and then stays inside. Clearly $\mathbb{P}_\vartheta^\xi({t}_\xi < \infty) = 1$. Therefore we can choose an integer $m_1$ such that $\mathbb{P}_\vartheta^\xi({t}_\xi \leq \tau^*_{m_1})\geq 1/2$. By Lemma \ref{th6.2}, \eqref{eq6.4} and \eqref{eq6.5},
\begin{align*}
\Theta(\xi, \eta)
&\geq {\sum}_{\zzz \in \mathcal{J}_{m_1} \cap\,\iota^{-1}(T_\xi)} \ell_{m_1}^\xi(\zzz)\Theta(\zzz,\eta) \\
&\geq c_1 (\lambda r^\alpha)^{-(\xi|\eta)} ~{\sum}_{\zzz \in \mathcal J_{m_1} \cap\,\iota^{-1}(T_\xi)} \ell_{m_1}^\xi(\zzz) \\
&\geq c_1(\lambda r^\alpha)^{-(\xi|\eta)} \cdot \mathbb{P}_\vartheta^\xi(t_\xi \leq \tau^*_{m_1}) \geq c_2 (\lambda r^\alpha)^{-(\xi|\eta)}.
\end{align*}

\vspace{2mm}

To obtain the upper bound, we first observe that $
\ell_k^\xi(\zzz) \leq G^\xi(\vartheta,\zzz) = G(\vartheta,\zzz)K(\zzz,\xi).
$
This together with the reversibility $G(\vartheta, \zzz) =  (m(\zzz)/m(\vartheta))G(\zzz,\vartheta)$ imply
\begin{align*}
\Theta(\xi, \eta)
&\leq (m(\vartheta))^{-1} \lim\limits_{k \to \infty} \sum\limits_{\zzz \in \mathcal{J}_k} m(\zzz)K(\zzz,\xi)K(\zzz,\eta) \\
&\asymp \lim\limits_{k \to \infty} \sum\limits_{\zzz \in \mathcal{J}_k} (\lambda r^\alpha)^{|\zzz|-(\zzz|\xxx_k)-(\zzz|\yyy_k)}
% &= \lim\limits_{k \to \infty} \sum\limits_{\zzz \in \mathcal{J}_k} ({\lambda r^\alpha})^{(d(\zzz,\xxx_k)+d(\zzz,\yyy_k))/2-k}\\
\end{align*}
(the $\asymp$ is a consequence of  Proposition \ref{th4.5}, Theorem \ref{th5.3} and Lemma \ref{th3.8}(ii)). Let $\Phi(\zzz)$ denote the summand in the above sum. Using Lemma \ref{th3.8}(iii) and the triangle inequality, we have
\begin{equation} \label{eq6.6}
\Phi(\zzz) \leq ({\lambda r^\alpha})^{d(\xxx_k,\yyy_k)/2-k} = (\lambda r^\alpha)^{-(\xxx_k|\yyy_k)} \leq (\lambda r^\alpha)^{-(\xi|\eta)}, \quad \zzz \in \mathcal{J}_k.
\end{equation}
For any ${\www} \in \mathcal{J}_k$, $k \geq 1$, let
$
\mathcal{F}_k(\www) = \{\zzz \in \mathcal{J}_k: d(\zzz,\www) \leq M\},
$
where $M$ is the maximal length of the horizontal geodesics. As $(X,{\mathfrak E})$ is of bounded degree (Lemma \ref{th3.7}), it is direct to check that $\#\mathcal{F}_k({\bf w}) \leq C_1$ for some $C_1>0$ independent of ${\bf w}$ and $k$.
Our aim is to show that for sufficient large $n$, the sum $\sum_{\zzz \in \mathcal{J}_{k+n}} \Phi(\zzz)$ is concentrated on the $n$-th descendants of  $\mathcal{F}_k(\xxx_k) \cup \mathcal{F}_k(\yyy_k)$. This enables us to obtain the desired upper bound.

\vspace{1mm}

To this end, for $\zzz \in \mathcal J_k$, we let ${\mathcal J}_n(\zzz)$ denote the set of $n$-th descendants of $\zzz$ in ${\mathcal J}_{k+n}$. For $\zzz \notin \mathcal F_k(\xxx_k)$ and $\zzz^{(1)} \in \mathcal J_1(\zzz)$, it is clear that $d(\zzz^{(1)},\xxx_{k+1}) \geq d(\zzz,\xxx_k) > M$, which yields $\zzz^{(1)} \notin \mathcal F_{k+1}(\xxx_{k+1})$. Moreover, by the same argument as in Lemma \ref{th3.8}(iii), we have $(\zzz^{(1)}|\xxx_{k+1}) = (\zzz|\xxx_k)$.
Let $\mathcal{F}_k^c = \mathcal{J}_k \setminus (\mathcal{F}_k(\xxx_k) \cup \mathcal{F}_k(\yyy_k))$. It follows that $\Phi(\zzz^{(1)}) = \lambda r^\alpha\Phi(\zzz)$ for any $\zzz \in \mathcal{F}_k^c$, and hence inductively,
 \begin{equation} \label {eq6.7}
 \Phi(\zzz^{(n)}) = (\lambda r^\alpha)^n \Phi(\zzz),\qquad \zzz \in \mathcal{F}_k^c, \ \ \zzz^{(n)} \in {\mathcal J}_n(\zzz).
 \end{equation}
 Also by the same proof as Lemma \ref{th3.0}, we have
\begin{equation} \label {eq6.8}
 r^{-\alpha n} \leq \#{\mathcal J}_n(\zzz) \leq r^{-\alpha (n+1)}.
\end{equation}
Now choose  $n_0$ such that  $\delta = r^{-\alpha} \lambda^{n_0} <1$. It follows from \eqref{eq6.6}-\eqref{eq6.8} that for any $k \geq 1$,
\begin{align*}
\sum\limits_{\www \in \mathcal{J}_{k+n_0}} \Phi(\www)
&= \Big(\sum\limits_{\zzz \in \mathcal{F}_k^c} \sum\limits_{\www\in {\mathcal J}_{n_0}(\zzz)} + \sum\limits_{\zzz \in \mathcal{F}_k(\xxx_k)} \sum\limits_{\www\in {\mathcal J}_{n_0}(\zzz)}  +
 \sum\limits_{\zzz \in \mathcal{F}_k(\yyy_k)} \sum\limits_{\www\in {\mathcal J}_{n_0}(\zzz)} \Big ) \Phi(\www)\\
&\leq r^{-\alpha}\lambda^{n_0}\sum\limits_{\zzz \in \mathcal{F}_k^c} \Phi(\zzz) +  r^{-\alpha (n_0+1)}\Big (\sum\limits_{\zzz \in \mathcal{F}_k(\xxx_k)} +\sum\limits_{\zzz \in \mathcal{F}_k(\yyy_k)}\Big )(\lambda r^\alpha)^{-(\xi|\eta)} \\
&\leq \delta \sum\limits_{\zzz \in \mathcal{J}_k} \Phi(\zzz) + C_2 (\lambda r^\alpha)^{-(\xi|\eta)}.
\end{align*}
By iteration, we have for any integer $q \geq 1$, and any $k \in \{1,2,\cdots,n_0\}$,
\begin{align*}
\sum\limits_{\www \in \mathcal{J}_{k+n_0 q}} \Phi(\www)
&\leq \delta^q \sum\limits_{\zzz \in \mathcal{J}_k}\Phi(\zzz) + \dfrac{C_2}{1-\delta}(\lambda r^\alpha)^{-(\xi|\eta)} \\
&\leq \left(\# \mathcal J_k+\dfrac{C_2}{1-\delta}\right)\cdot (\lambda r^\alpha)^{-(\xi|\eta)} \qquad \hbox{(by \eqref{eq6.6})}\\
&\leq \left(\# \mathcal J_{n_0}+\dfrac{C_2}{1-\delta}\right)\cdot (\lambda r^\alpha)^{-(\xi|\eta)}.
\end{align*}
This completes the upper estimation of $\Theta(\xi,\eta)$.
\end{proof}

\medskip

We remark that for homogeneous IFS, the SRW is a special example of $\lambda$-NRW with $\lambda = 1/N = r^\alpha$. Hence by Theorem \ref{th6.4}, $\Theta(\xi, \eta) \asymp |\xi - \eta|^{-2\alpha}$. Also for the example in Section ~\ref{sec:3}, Remark 2 (see Figure ~\ref{fig:3}), the above theorem (with slight adjustment) shows that for the SRW there,  the Na\"{i}m kernel $\Theta(\xi, \eta) \asymp |\xi - \eta|^{-2}$, which resembles the classical case of the kernel in the Douglas integral \eqref{eq1.2} on the unit circle.

\medskip

We also remark that there is an analogous result for the more general random walks on $N$-ary trees investigated by Kigami \cite{Ki2}.  For the $\lambda$-QNRW on the $N$-homogeneous tree $T$ defined by a set of probability weights $\{p_i\}_{i=1}^N$, the effective resistance between $\xxx \in T$ and $\Sigma_\xxx$ with respect to the sub-tree $T_\xxx$ is $R_\xxx = \lambda^{|\xxx|+1}/((1-\lambda)p_\xxx)$, which can be derived from the combined resistance by iteration. Since the hitting distribution $\nu$ is the self-similar measure of $\{p_i\}_{i=1}^N$ on the Cantor set $\Sigma^\infty$ as Martin boundary, we have $\nu(\Sigma^\infty_\xxx)=p_\xxx$. In the notations of \cite {Ki2},  $D_\xxx:=\nu(\Sigma^\infty_\xxx)R_\xxx=\lambda^{|\xxx|+1}/(1-\lambda)$,  $\lambda_\xxx=1/D_\xxx=(1-\lambda)/\lambda^{|\xxx|+1}$, and $N(\xi, \eta) = (\xi|\eta)$. Following \cite[Theorem 5.6]{Ki2}, we can rewrite his jump kernel $J(\xi,\eta)$ for such $\lambda$-QNRW as
\begin{equation} \label{eq6.9}
J(\xi,\eta) = \dfrac{1-\lambda}{2\lambda}\Big(1+\sum_{m=1}^{(\xi|\eta)}\dfrac{1-\lambda}{\lambda^m p_{[\xi,\eta]_m}}\Big) \asymp \lambda^{-(\xi|\eta)}p_{[\xi,\eta]}^{-1}, \quad \xi,\eta \in \Sigma^\infty,
\end{equation}
where $[\xi,\eta]_m$ is the unique word in $\Sigma^m$ such that both $\xi$ and $\eta$ belong to $\Sigma_{[\xi,\eta]_m}$, and $[\xi,\eta]:=[\xi,\eta]_{(\xi|\eta)}$.
In particular, for the $\lambda$-NRW on the $N$-homogeneous tree, $p_\xxx = N^{-|\xxx|}$, and we have the following estimate:
\begin{equation} \label{eq6.10}
J(\xi,\eta) = \dfrac{(1-\lambda)(N-1)}{2(N-\lambda)}+\dfrac{N(1-\lambda)^2(N/\lambda)^{(\xi|\eta)}}{2\lambda(N-\lambda)} \asymp (N/\lambda)^{(\xi|\eta)}=(\lambda r^\alpha)^{-(\xi|\eta)},
\end{equation}
where $\alpha=|\log N/\log r|$ is the Hausdorff dimension of the Cantor set $\Sigma^\infty$. It coincides with Theorem \ref{th6.4}.

\bigskip

\section{Induced Dirichlet forms}
\label{sec:7}

\noindent In this section we use the $\lambda$-NRW and the Na\"{i}m kernel to induce an energy form on $K$, and make some remarks about the known results for it to be a Dirichlet form. First we summarize some general results that hold for all transient reversible random walks.

\medskip

Let $(X, {\mathfrak E})$ be a countably infinite, connected, locally finite graph,   and let $\{Z_n\}_{n=0}^\infty$ be a transient reversible random walk on $(X, E)$ with conductance $c$ and total conductance $m$. The {\it graph energy} of a (real) function $f$ on $X$ is defined by
\begin{equation*}
{\mathcal{E}}_X[f] = \frac{1}{2} \sum_{x, y \in X, x \sim y} c(x, y) (f(x) - f(y))^2.
\end{equation*}
We let ${\mathcal D}_X $, the domain of ${\mathcal{E}}_X$, be the set of $f: X \to \mathbb R$ with ${\mathcal{E}}_X[f] <\infty$. Since $P(x, y) = c(x, y)/m(x)$, we have
\begin{align} \label{eq7.1}
{\mathcal{E}}_X[f] &= \frac{1}{2} \sum_{x \in X} m(x) \sum_{y \in X, y \sim x} P(x, y)(f(y) - f(x))^2 \nonumber \\
                   &= \frac{1}{2} \sum_{x \in X} m(x) {\mathbb{E}}_x [(f(Z_1) - f(Z_0))^2].
\end{align}
The graph energy defines a  non-negative definite symmetric form ${\mathcal{E}}_X (f, g)$ on ${\mathcal D}_X$ by polarization. Moreover, if we fix any  $x_0 \in X$, then ${\mathcal D}_X$ is a Hilbert space under the inner product
$$
\langle f, g\rangle_0 := f(x_0)g(x_0) + {\mathcal E}_X(f, g), \quad f, g \in {\mathcal D}_X,
$$
and the convergence in $({\mathcal D}_X, {\mathcal E}_X)$ implies pointwise convergence \cite[Lemma 2.4]{Wo1}.

\medskip

\begin{proposition} \label{th7.1} Let ${\mathcal M}$ be the Martin boundary of $\{Z_n\}_{n=1}^\infty$, and let $\nu=\nu_\vartheta$ be the hitting distribution.
Suppose $f \in {\mathcal D}_X$ and is harmonic. Then  $\{f(Z_n)\}_{n = 0}^{\infty}$ converges almost surely and in $L^2$, and there exists $u \in L^2({\mathcal M}, \nu)$ such that  $\lim_{n\to \infty} f(Z_n) =u(Z_\infty)$.  Moreover,  $u$ is uniquely determined $\nu$-a.e., and $f = Hu$ where
$$
Hu (x) = \int_{\mathcal M} u(\xi) K(x, \xi)\,d\nu(\xi), \quad x \in X.
$$
\end{proposition}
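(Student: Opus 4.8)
The plan is to exploit the Hilbert space structure of $(\mathcal D_X, \langle\cdot,\cdot\rangle_0)$ together with the martingale property of harmonic functions along the chain. First I would observe that since $f$ is harmonic, $\{f(Z_n)\}_{n\geq 0}$ is a martingale with respect to the natural filtration under $\mathbb P_\vartheta$; the point is to show it is $L^2$-bounded. Using \eqref{eq7.1} and iterating the harmonicity/reversibility identities, one computes
$$
\mathbb E_\vartheta\big[(f(Z_{n+1})-f(Z_n))^2\big] \leq \frac{2}{m(\vartheta)}\,\mathcal E_X[f],
$$
or rather, summing the orthogonal martingale increments, $\sum_{n\geq 0}\mathbb E_\vartheta[(f(Z_{n+1})-f(Z_n))^2] \leq C\,\mathcal E_X[f] < \infty$ by the identity $\mathbb E_\vartheta[\,\cdot\,] = \sum_n \mathbb P_\vartheta(Z_n = x)\cdots$ and $G(\vartheta,x) = m(x)G(x,\vartheta)/m(\vartheta)$; this is the standard Dirichlet-form energy estimate (cf. \cite[Lemma 2.4]{Wo1} and the discussion around \eqref{eq7.1}). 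Hence $\sup_n \mathbb E_\vartheta[f(Z_n)^2]<\infty$, so by the martingale convergence theorem $f(Z_n)$ converges $\mathbb P_\vartheta$-a.s. and in $L^2(\mathbb P_\vartheta)$ to a limit random variable $Y$.

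Next I would identify $Y$ as a function of $Z_\infty$. Since $Z_n \to Z_\infty \in \mathcal M$ a.s. in the Martin topology, and $Y$ is measurable with respect to the tail $\sigma$-algebra, the $0$--$1$ law for the Martin boundary (or directly: the tail field equals $\sigma(Z_\infty)$ up to $\mathbb P_\vartheta$-null sets, see \cite{Wo1}) gives a Borel function $u$ on $\mathcal M$ with $Y = u(Z_\infty)$ a.s. Because $\nu_\vartheta$ is the law of $Z_\infty$ and $\mathbb E_\vartheta[Y^2] = \int_{\mathcal M} u^2\,d\nu < \infty$, we get $u \in L^2(\mathcal M,\nu)$, and $u$ is determined $\nu$-a.e. by $Y$. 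Uniqueness is then immediate.

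For the representation $f = Hu$: fix $x\in X$ and run the chain from $x$ instead of $\vartheta$. The same $L^2$-bounded-martingale argument under $\mathbb P_x$ shows $f(Z_n) \to $ some limit in $L^1(\mathbb P_x)$, hence $f(x) = \mathbb E_x[f(Z_0)] = \lim_n \mathbb E_x[f(Z_n)] = \mathbb E_x[\lim_n f(Z_n)]$. One must check that this limit is still $u(Z_\infty)$ $\mathbb P_x$-a.s.; this follows since $\nu_x \ll \nu_\vartheta$ with $d\nu_x/d\nu_\vartheta = K(x,\cdot)$ (stated in Section~\ref{sec:2}), so the a.s.\ identification $Y=u(Z_\infty)$ transfers from $\mathbb P_\vartheta$ to $\mathbb P_x$, and the convergence $f(Z_n)\to Y$ holds $\mathbb P_\vartheta$-a.s.\ hence $\nu_\vartheta$-a.e.\ in the starting endpoint after disintegrating. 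Therefore
$$
f(x) = \mathbb E_x[u(Z_\infty)] = \int_{\mathcal M} u(\xi)\,d\nu_x(\xi) = \int_{\mathcal M} u(\xi)K(x,\xi)\,d\nu_\vartheta(\xi) = Hu(x).
$$

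The main obstacle I anticipate is the $L^2$-boundedness estimate of the martingale, i.e.\ showing $\mathbb E_\vartheta[f(Z_n)^2]$ is controlled by $\mathcal E_X[f]$ (and $f(\vartheta)^2$) uniformly in $n$: this requires carefully summing the increment energies against the Green function and using transience, and is the one place where the finiteness of $\mathcal E_X[f]$ genuinely enters. Once that bound is in hand, the martingale convergence, the tail-field identification, and the Poisson representation are all routine consequences of the potential theory recalled in Section~\ref{sec:2}.
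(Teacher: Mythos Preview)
Your proposal is correct and follows essentially the same route as the paper: prove $L^2$-boundedness of the martingale $\{f(Z_n)\}$ by summing the increment energies against the Green function and invoking reversibility (the paper writes this as $\mathbb{E}_x[f(Z_n)^2] \leq f(x)^2 + 2\,\frac{G(x,x)}{m(x)}\,\mathcal{E}_X[f]$), then apply martingale convergence, identify the limit as $u(Z_\infty)$ via shift-invariance, and deduce the Poisson representation.

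One small point: your argument for $f=Hu$ via ``transferring the a.s.\ identification from $\mathbb{P}_\vartheta$ to $\mathbb{P}_x$ using $\nu_x\ll\nu_\vartheta$'' is slightly off, since the convergence $f(Z_n)\to u(Z_\infty)$ is a path event, not a function of $Z_\infty$ alone; you really need irreducibility (positive probability of reaching $x$ from $\vartheta$) together with the strong Markov property to transfer null sets in path space. The paper handles this more directly by observing, under $\mathbb{P}_\vartheta$, that $f(Z_n)=\mathbb{E}[u(Z_\infty)\mid\mathcal{F}_n]=Hu(Z_n)$, and then invoking irreducibility to conclude $f\equiv Hu$ on all of $X$.
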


\begin{proof}
The first statement was actually proved in \cite[Theorem 1.1]{ALP} without the assumption that $f$ is harmonic. We only need this result when $f$ is harmonic, and the proof is easy for this case. Fix $x \in X$, we claim that  ${\mathbb{E}}_x[f(Z_n)^2]$ is bounded. In fact, Since $f$ is harmonic, $\{f(Z_n)\}_{n=0}^\infty$ is a martingale under ${\mathbb{P}}_x$. It follows that
\begin{align*}
{\mathbb{E}}_x[f(Z_n)^2] &\leq f(x)^2 + \sum_{k = 1}^{\infty} {\mathbb{E}}_x\big [(f(Z_k) - f(Z_{k-1}))^2\big ] \\
&= f(x)^2 + \sum_{k = 1}^{\infty} \sum_{y \in X} P^k(x, y) {\mathbb{E}}_y\big [(f(Z_1) - f(Z_0))^2\big ] \\
&= f(x)^2 + \sum_{y \in X} G(x, y) {\mathbb{E}}_y\big [(f(Z_1) - f(Z_0))^2\big ].
\end{align*}
Using the reversibility, we have $G(x, y) = \frac{m(y)}{m(x)}G(y, x)  \leq \frac{m(y)}{m(x)}G(x, x) $.  Therefore, by (\ref{eq7.1}),
\begin{align*}
{\mathbb{E}}_x[f(Z_n)^2] %\leq f(x)^2 + \frac{G(x, x)}{m(x)} \sum_{y \in X} m(y) {\mathbb{E}}_y[(f(Z_1) - f(Z_0))^2] \\
% &
\leq f(x)^2 + 2 \frac{G(x, x)}{m(x)} \, {\mathcal{E}}_X[f],
\end{align*}
and the claim follows.
Consequently, $\{f(Z_n)\}_{n=0}^\infty$ is an $L^2$-bounded martingale under ${\mathbb{P}}_x$. The martingale convergence theorem implies the almost surely convergence and $L^2$-convergence.

\vspace {0.2cm}

Let $Y = \lim_{n \rightarrow \infty} f(Z_n)$. Note that $Y$ is a {\it final} random variable, i.e. $Y \circ \theta = Y$ a.s. where $\theta$ is the shift operator on the path space. Hence there exists a measurable function $u$ on ${\mathcal{M}}$ such that $Y = u(Z_{\infty})$ a.s. (under any ${\mathbb{P}}_x$) (see \cite{Dy}). It is clear that $u$ is unique $\nu$-a.e. Now since $Y \in L^2({\mathbb{P}}_\vartheta)$, we have
\[
\int_{{\mathcal{M}}} u(\xi)^2 d\nu(\xi) = {\mathbb{E}}_\vartheta(u(Z_{\infty})^2) = {\mathbb{E}}_\vartheta(Y^2) < \infty.
\]
 That $f= Hu$ follows from
\begin{equation*}
f(Z_n)=\mathbb E[u(Z_\infty) | \mathcal F_n]= \mathbb E[u(Z_\infty) | Z_n] = \mathbb E_{Z_n}[u(Z_\infty)] = Hu(Z_n),
\end{equation*}
and the irreducibility of the chain.
\end{proof}

\medskip

 For a reversible random walk on $(X, {\mathfrak E})$, the energy form induces a bilinear form $(\en_{\mathcal M}, \dom_{\mathcal M})$ on $L^2({\mathcal M}, \nu)$ defined by
\begin{equation} \label{eq7.2}
\en_{\mathcal M}(u,v) = \en_X(Hu,Hv),  \quad u, v \in \dom_{\mathcal M} ,
\end{equation}
where  $\dom_{\mathcal M} = \{ u \in L^2({\mathcal M}, \nu):  Hu \in {\mathcal D}_X \}$ is the domain of $\en_{\mathcal M}$. It follows from Proposition \ref{th7.1} that
\begin{equation} \label{eq7.2'}
\dom_{\mathcal M} = \{u \in L^2(\mathcal M, \nu): \ \exists \hbox{ harmonic } f \in {\mathcal D}_X \ \hbox{such that}\  u(Z_\infty)=\lim_{n \to \infty} f(Z_n) \ \hbox{a.s.}\}.
\end{equation}

\medskip

\begin{theorem} \label{th7.2}{\rm \cite[Theorem 3.5]{Si}}
The induced bilinear form $({\mathcal E}_{\mathcal M}, {\mathcal D}_{\mathcal M})$ has the expression
\begin{eqnarray*}
{\mathcal E}_{\mathcal M}[u] = {\mathcal E}_X[Hu] = \frac{1}{2}  m(\vartheta) \int_{\mathcal M} \int_{\mathcal M} (u(\xi) - u(\eta))^2 \Theta(\xi, \eta) d\nu(\xi) d\nu(\eta), \quad u \in {\mathcal D}_{\mathcal M},
\end{eqnarray*}
where $\Theta (\xi,\eta) $ is the Na\"{i}m kernel defined in \eqref{eq6.3}. Moreover,
$ {\mathcal D}_{\mathcal M}
 = \{u \in L^2({\mathcal M}, \nu):\, {\mathcal E}_{\mathcal M} [u] < \infty\} $.
\end{theorem}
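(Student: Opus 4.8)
The plan is to carry out Silverstein's argument \cite{Si} in our setting. By polarization it suffices to establish the bilinear identity
$$\mathcal{E}_X(Hu,Hv)=\tfrac12\,m(\vartheta)\int_{\mathcal M}\int_{\mathcal M}\bigl(u(\xi)-u(\eta)\bigr)\bigl(v(\xi)-v(\eta)\bigr)\,\Theta(\xi,\eta)\,d\nu(\xi)\,d\nu(\eta)$$
for $u,v\in\mathcal{D}_{\mathcal M}$, together with the two inclusions describing $\mathcal{D}_{\mathcal M}$. That the kernel must be $m(\vartheta)\,\Theta$ is already visible at the level of finite vertices: the Green function is the reproducing kernel of $(\mathcal{E}_X,\mathcal{D}_X)$, i.e.\ $\mathcal{E}_X\bigl(G(\cdot,z_0),g\bigr)=m(z_0)g(z_0)$ for $g\in\mathcal{D}_X$ (integrate by parts and use $(I-P)G(\cdot,z_0)=\mathbf 1_{\{z_0\}}$), so $\mathcal{E}_X\bigl(G(\cdot,z_0),G(\cdot,z_1)\bigr)=m(z_0)G(z_0,z_1)$; dividing by $G(\vartheta,z_0)G(\vartheta,z_1)$ and inserting the reversibility identity $m(z_0)G(z_0,\vartheta)=m(\vartheta)G(\vartheta,z_0)$ yields $\mathcal{E}_X\bigl(K(\cdot,z_0),K(\cdot,z_1)\bigr)=m(\vartheta)\,\Theta(z_0,z_1)$ for all $z_0,z_1\in X$. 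The genuine difficulty is that for a boundary point $\xi\in\mathcal M$ the minimal kernel $K(\cdot,\xi)$ has infinite energy, so this identity cannot be integrated directly against $u(\xi)v(\eta)\,d\nu\,d\nu$; this is precisely why the boundary-difference form occurs and why the truncation built into the definition \eqref{eq6.3} of $\Theta(\xi,\eta)$ is needed.

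For the main identity I would compute $\mathcal{E}_X[Hu]$ by exhausting $X$ through the finite sets $X_k$. Since $Hu$ is harmonic, the truncated energies $\mathcal{E}_{X_k}[Hu|_{X_k}]=\tfrac12\sum_{x\sim y,\;x,y\in X_k}c(x,y)(Hu(x)-Hu(y))^2$ increase to $\mathcal{E}_X[Hu]$ (rearrangement of a nonnegative series), and because $Hu$ is harmonic on $X_{k-1}$ this energy is the trace (Dirichlet-to-Neumann) quadratic form of the finite network $X_k$ evaluated on the boundary data $Hu|_{\mathcal J_k}$ — hence a pure conductance form $\tfrac12\sum_{z,z'\in\mathcal J_k}\widehat c_k(z,z')(Hu(z)-Hu(z'))^2$. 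The heart of the proof is to identify the limit of these level-$k$ double sums. Passing to the $\xi$-processes of \eqref{eq6.1} and the last-exit times $\tau_k^*$, one rewrites the level-$k$ energy, equivalently, as a sum over $\mathcal J_k$ against the last-exit kernels $\ell_k^\xi$; the two ingredients
$$\Theta(\xi,\eta)=\lim_{k\to\infty}\sum_{z,z'\in\mathcal J_k}\ell_k^\xi(z)\,\ell_k^\eta(z')\,\Theta(z,z'),\qquad \int_{\mathcal M}u(\xi)\,\ell_k^\xi(z)\,d\nu(\xi)=\mathbb E_\vartheta\!\bigl[u(Z_\infty)\mathbf 1_{\{Z_{\tau_k^*}=z\}}\bigr]$$
(the first obtained from Lemma \ref{th6.2} and \eqref{eq6.3} applied in both variables, the second by disintegrating $\nu=\nu_\vartheta$ over $Z_\infty$), together with the $L^2(\mathbb P_\vartheta)$-convergence $Hu(Z_n)\to u(Z_\infty)$ of Proposition \ref{th7.1}, let one pass to the limit and recover $\tfrac12 m(\vartheta)\int\!\int(u(\xi)-u(\eta))^2\Theta\,d\nu\,d\nu$. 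The replacement of the $u(\xi)u(\eta)$-weighting that these manipulations naturally produce by the $(u(\xi)-u(\eta))^2$-weighting of the statement is harmless, because $\int_{\mathcal M}K(x,\xi)\,d\nu(\xi)=1$ gives $\int_{\mathcal M}\bigl(K(x,\xi)-K(y,\xi)\bigr)\,d\nu(\xi)=0$, so on the truncated, absolutely convergent level-$k$ sums one may symmetrise and replace $u(\xi)u(\eta)$ by $-\tfrac12(u(\xi)-u(\eta))^2$ up to terms vanishing in the limit; polarizing $u\mapsto u+v$ then gives the bilinear form. The step I expect to be the main obstacle is exactly this interchange of limits: the ``volumes'' $m(\mathcal J_k)$ grow without bound, so the level-$k$ sums converge only through delicate cancellation, and controlling it requires the two-sided estimate $\Theta(\xi,\eta)\asymp(\lambda r^\alpha)^{-(\xi|\eta)}$ of Theorem \ref{th6.4} — which supplies integrability of $\Theta$ off the diagonal together with the needed concentration bounds — in an essential way.

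For the domain statement, the inclusion $\mathcal{D}_{\mathcal M}\subseteq\{u\in L^2(\mathcal M,\nu):\mathcal{E}_{\mathcal M}[u]<\infty\}$ is immediate from $\mathcal{E}_{\mathcal M}[u]=\mathcal{E}_X[Hu]$ and the definition \eqref{eq7.2} of $\mathcal{D}_{\mathcal M}$, once the main identity is known. For the reverse inclusion set $I(u):=\int\!\int(u(\xi)-u(\eta))^2\Theta\,d\nu\,d\nu$ and suppose $u\in L^2(\mathcal M,\nu)$ with $I(u)<\infty$. The computation of the preceding paragraph uses only that $Hu$ is the Poisson integral of $u$, so it already gives $\mathcal{E}_X[Hu]=\tfrac12 m(\vartheta)I(u)$ for every \emph{bounded} $u$; hence for bounded $u$ with $I(u)<\infty$ the harmonic function $Hu$ has finite energy, i.e.\ $u\in\mathcal{D}_{\mathcal M}$. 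For general $u$ with $I(u)<\infty$ one replaces $u$ by its truncations $(-N)\vee u\wedge N$, which converge to $u$ in $L^2(\mathcal M,\nu)$ and for which $I(\cdot)$ does not increase (the truncation being $1$-Lipschitz); since $\mathcal{D}_{\mathcal M}$ is closed in $L^2(\mathcal M,\nu)$ for the norm $\|u\|_{L^2}^2+I(u)$ — because a Cauchy sequence $u_n$ there has $Hu_n$ Cauchy in $(\mathcal{D}_X,\mathcal{E}_X)$, hence convergent to a finite-energy harmonic function that must equal $Hu$ (using $\|K(x,\cdot)\|_{L^2(\nu)}<\infty$, which follows from the estimate of $K$ in Theorem \ref{th5.3}, so that $u\mapsto Hu(x)$ is $L^2$-continuous) — it follows that $u\in\mathcal{D}_{\mathcal M}$. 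Combined with Proposition \ref{th7.1}, which realizes every finite-energy harmonic function as some $Hu$, this gives $\mathcal{D}_{\mathcal M}=\{u\in L^2(\mathcal M,\nu):\mathcal{E}_{\mathcal M}[u]<\infty\}$ and completes the proof.
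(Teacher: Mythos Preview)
The paper does not give its own proof of this theorem; it is quoted verbatim as \cite[Theorem~3.5]{Si}, so there is no in-paper argument to compare against. Your outline does follow the skeleton of Silverstein's approach (reproducing-kernel identity for $K(\cdot,z)$, exhaustion by $X_k$, last-exit representation via $\ell_k^\xi$, monotone passage to the limit), and your computation $\mathcal{E}_X\bigl(K(\cdot,z_0),K(\cdot,z_1)\bigr)=m(\vartheta)\,\Theta(z_0,z_1)$ for vertices is correct and is indeed the seed of the argument.

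There is, however, a genuine conceptual problem. Theorem~\ref{th7.2} is stated in the paper as a \emph{general} result for transient reversible random walks on locally finite graphs (see the opening lines of Section~\ref{sec:7}), and Silverstein's proof in \cite{Si} is carried out in that generality. Your sketch declares the two-sided estimate $\Theta(\xi,\eta)\asymp(\lambda r^\alpha)^{-(\xi|\eta)}$ of Theorem~\ref{th6.4} to be needed ``in an essential way'' to control the interchange of limits. This is incorrect: Silverstein's argument uses no pointwise kernel bound of this sort; the limit is controlled by the monotonicity behind Lemma~\ref{th6.2} (itself quoted from \cite{Si}) together with the last-exit decomposition, not by an \emph{a~priori} estimate on $\Theta$. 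Invoking Theorem~\ref{th6.4} also inverts the logical order of the paper --- Theorem~\ref{th6.4} is computed \emph{using} Silverstein's extension \eqref{eq6.3} --- and restricts the conclusion to the $\lambda$-NRW when the statement is meant to be general. The same remark applies to your domain argument, where you appeal to the $K$-estimate of Theorem~\ref{th5.3}: that estimate is specific to the $\lambda$-NRW, whereas $\mathcal{D}_{\mathcal M}=\{u\in L^2(\mathcal{M},\nu):\mathcal{E}_{\mathcal M}[u]<\infty\}$ holds in Silverstein's general setting. Apart from this misattribution, the crucial limit step in your sketch remains at the level of ``these ingredients let one pass to the limit,'' which is exactly where the real work in \cite{Si} lies.
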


\medskip

As a direct consequence of Theorem \ref{th6.4} and the above, we have

\medskip

\begin {theorem} \label {th7.3} Let $\{S_i\}_{i=1}^N$ be an IFS on $\mathbb{R}^d$ that satisfies the OSC, and let $K$ be the self-similar set. Let $\{Z_n\}_{n=0}^{\infty}$ be a $\lambda$-NRW  on the augmented tree $(X, {\mathfrak E})$. Then
\begin{eqnarray*}
{\mathcal E}_K[u] \asymp  \int_K \int_K (u(\xi) - u(\eta))^2 \ |\xi - \eta|^{-(\alpha + \beta)} d\nu(\xi) d\nu(\eta), \quad u \in {\mathcal D}_K,
\end{eqnarray*}
where $\alpha=\dim_H K$, and $\beta = \frac{\log \lambda}{\log r}$.
\end{theorem}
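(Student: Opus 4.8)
The statement is presented as a direct consequence of Theorem~\ref{th6.4} and Theorem~\ref{th7.2}, so the plan is essentially to substitute the Na\"im kernel estimate into Silverstein's formula and keep track of the domain. First I would invoke Theorem~\ref{th5.1} (the OSC case) to identify the Martin boundary ${\mathcal M}$ of the $\lambda$-NRW with the self-similar set $K$ under the canonical homeomorphism, and Theorem~\ref{th5.6} to identify the hitting distribution $\nu=\nu_\vartheta$ with the self-similar measure of the natural weights $p_i=r_i^\alpha$, i.e.\ the normalized Hausdorff measure ${\mathcal H}^\alpha$ on $K$; in particular $\nu$ is non-atomic. Under these identifications ${\mathcal E}_K$ is precisely the induced form ${\mathcal E}_{\mathcal M}$ of \eqref{eq7.2}, so Theorem~\ref{th7.2} gives
\[
{\mathcal E}_K[u] = \tfrac12\, m(\vartheta) \int_K\!\!\int_K (u(\xi)-u(\eta))^2\,\Theta(\xi,\eta)\,d\nu(\xi)\,d\nu(\eta), \qquad u\in{\mathcal D}_K,
\]
together with ${\mathcal D}_K=\{u\in L^2(K,\nu):{\mathcal E}_K[u]<\infty\}$. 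By Proposition~\ref{th4.5} we have $m(\vartheta)=\lambda^{-1}$, a fixed positive constant, which therefore only affects the implied constants in $\asymp$.

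Next I would apply Theorem~\ref{th6.4}: since the random walk is a $\lambda$-NRW on the augmented tree $(X,{\mathfrak E})$ of Definition~\ref{de3.3}, that theorem applies verbatim and yields $\Theta(\xi,\eta)\asymp|\xi-\eta|^{-(\alpha+\beta)}$ for all $\xi,\eta\in K$ with $\xi\neq\eta$, where $\beta=\log\lambda/\log r$. To turn this pointwise comparison of kernels into a comparison of the double integrals, the one point needing care is that Theorem~\ref{th6.4} only controls $\Theta$ off the diagonal; so I would note that the diagonal $\{(\xi,\xi):\xi\in K\}$ is $\nu\times\nu$-null. This holds because $\nu={\mathcal H}^\alpha$ is non-atomic on the infinite self-similar set $K$, hence each slice $\{\xi\}\times K$ is $\nu$-null and Fubini's theorem gives $(\nu\times\nu)(\{\xi=\eta\})=0$. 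Inserting the two-sided bound on $\Theta$ into the integral and absorbing $m(\vartheta)/2$ then gives the claimed two-sided estimate of ${\mathcal E}_K[u]$ for every $u$.

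Finally, the identity ${\mathcal D}_K=\{u\in L^2(K,\nu):{\mathcal E}_K[u]<\infty\}$ from Theorem~\ref{th7.2}, combined with the equivalence of ${\mathcal E}_K[u]$ and the Besov-type integral just established, shows that ${\mathcal D}_K$ coincides as a set with $\{u\in L^2(K,\nu):\int_K\int_K(u(\xi)-u(\eta))^2|\xi-\eta|^{-(\alpha+\beta)}\,d\nu\,d\nu<\infty\}$ (the Besov space $\Lambda^{\alpha,\beta/2}_{2,2}$), so the asserted estimate is stated on the correct common domain and nothing further is needed. I do not expect a genuine obstacle in this proof: beyond the routine bookkeeping of identifications and constants, the only substantive remark is the vanishing of the $\nu\times\nu$-measure of the diagonal, which is required precisely because Theorem~\ref{th6.4} supplies the kernel estimate only for $\xi\neq\eta$.
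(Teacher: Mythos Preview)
Your proposal is correct and follows exactly the approach the paper takes: the paper simply states Theorem~\ref{th7.3} ``as a direct consequence of Theorem~\ref{th6.4} and the above'' (i.e., Theorem~\ref{th7.2}), without further elaboration. Your write-up is in fact more careful than the paper's one-line justification, spelling out the identifications via Theorems~\ref{th5.1} and~\ref{th5.6} and noting that the diagonal is $\nu\times\nu$-null so that the off-diagonal kernel estimate suffices.
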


\medskip

Next we recall the definition of Dirichlet form (see \cite{CF,FOT}). Let $\mathfrak{X}$ be a locally compact separable metric space together with a positive Radon measure  $\mu$ such that $\supp (\mu) = \mathfrak{X}$; also let $C_0(\mathfrak{X})$ denote the space of continuous function on $\mathfrak{X}$ with compact support.

\medskip

\begin{definition} \label{th7.4}
A Dirichlet form $(\en, \dom)$ on $L^2(\mathfrak{X}, \mu)$ is a bilinear form which is symmetric, non-negative definite, closed, Markovian and densely defined on $L^2(\mathfrak{X}, \mu)$. It is called {\it regular} if the subspace $\dom \cap C_0(\mathfrak{X})$ is dense in $\dom$ with the $\en_1$-norm, and is dense in $C_0(\mathfrak{X})$ with the supremum norm. It is called {\it local} if for two functions $u,v \in \dom$ having disjoint compact supports, $\en(u,v)=0$.

\end{definition}

\medskip

It is easy to see that the bilinear form ${\mathcal E}_K(\cdot, \cdot)$ in Theorem \ref {th7.3} is symmetric, non-negative definite, closed and Markovian. For it to be a Dirichlet form, we need to show that its domain ${\mathcal D}_K$ is dense in $L^2(K, \nu)$. In view of the fact that the Na\"{i}m kernel in Theorem \ref{th7.3} satisfies the estimate $\Theta (\xi, \eta) \asymp |\xi - \eta|^{-(\alpha + \beta)}$, we introduce the following Besov space on $L^2(\mathfrak{X},\mu)$ (see \cite{J,St,GHL1,GHL2,HK}).

\medskip

For convenience, we assume that ${\mathfrak X}\subset {\mathbb R}^d$ and is equipped with the Euclidean distance; we also assume that ${\mathfrak X}$ has Hausdorff dimension $\alpha$, and $\mu (B(x;r)) \asymp r^\alpha$ for all $x \in {\mathfrak X}$ and $0<r<1$. We call such ${\mathfrak X}$ an {\it $\alpha$-set} \cite{JW}. For $\sigma >0$, and for $u \in L^2({\mathfrak X},\mu)$, we define
\begin{equation} \label{eq7.4}
\norm_{2,2}^{\alpha, \sigma}(u) = \ds\int_0^\infty \dfrac{dr}{r} \dfrac{1}{r^{\alpha+2\sigma} }\ds\iint\limits_{\{\xi,\eta \in {\mathfrak X}: |\xi-\eta|<r\}} (u(\xi)-u(\eta))^2d\mu(\xi)d\mu(\eta),
\end{equation}
and the {\it Besov spaces}
$
\Lambda_{2,2}^{\alpha, \sigma} = \{u \in L^2({\mathfrak X},\mu): \norm_{2,2}^{\alpha, \sigma}(u) < \infty\}
$
with the associated norms
$
\Vert u \Vert_{\Lambda_{2,2}^{\alpha, \sigma}}^2 = \Vert u \Vert_2^2 + \norm_{2,2}^{\alpha, \sigma}(u) $.
 The space can be trivial when $\sigma$ is a large value.  (For example, in Euclidean space $\mathbb{R}^d$, $\Lambda_{2,2}^{d,1} = \{0\}$.)
We introduce an important quantity $\beta^*\in [0, +\infty]$  which is intrinsic to the underlying space $\mathfrak X$ \cite{St,GHL1}:
\begin{align} \label{eq7.7}
\beta^* := \sup \{\beta>0: \Lambda_{2,2}^{\alpha, \beta/2} \cap C_0(\mathfrak X) \hbox{ is dense in } C_0(\mathfrak X) \}.
\end{align}
It is called the {\it critical exponent} of the family $\left\{\Lambda_{2,2}^{\alpha, \beta/2}\right\}_{\beta>0}$. The value of $\beta^*$ is already known for some standard cases:  for the Euclidean space $\mathbb{R}^d$, we have $\alpha =d, \beta^*=2$. For $d$-dimensional Sierpi\'{n}ski gasket, then $\alpha = \log (d+1)/ \log 2$, and $\beta^*=\log (d+3)/ \log 2$ \cite{J}. There are also extensions to nested fractals and related Besov spaces \cite{P1,P2}, and  evaluation of some other specific cases \cite{Ku}. For Cantor-type set as the boundary of an infinite binary tree, it follows from \cite[Theorem 5.6]{Ki2} that $\beta^*=\infty$.  In general, we know that if a metric measure space $({\mathfrak X}, \mu)$ satisfies $\mu (B(x; r)) \asymp r^{\alpha}$ for all $x \in {\mathfrak X}$ and $0<r<1$, then $\beta^* \geq 2$. If in addition  $\mathfrak{X}$ satisfies {\it the chain condition} \cite{GHL1}, then $\beta^* \leq \alpha+1$.

\medskip

Continuing the statement in Theorem \ref{th7.3}, we have the following conclusion.

\medskip

\begin{theorem} \label{th7.8}
With the same assumption and notations as in Theorem \ref{th7.3}, we have
\begin{equation} \label {eq7.8}
\en_K[u] \asymp \norm_{2,2}^{\alpha, \beta/2}(u), \quad u \in \dom_K ,
\end{equation}
and $\dom_K$ is the Besov space $\Lambda_{2,2}^{\alpha, \beta/2}$. Therefore, if $\beta < \beta^*$, then $(\en_K, \dom_K)$ is a non-local Dirichlet form on $L^2(K,\nu)$.
\end{theorem}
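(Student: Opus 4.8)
The strategy is to identify $\en_K$ with a weighted double integral over $K\times K$, recognize this integral as a Besov seminorm via Fubini, and then verify the Dirichlet-form axioms, the density being the only point where $\beta<\beta^{*}$ is used. Since the IFS satisfies the OSC, $K$ is Ahlfors $\alpha$-regular, i.e. $\nu(B(\xi;r))\asymp r^{\alpha}$ for $\xi\in K$ and $0<r<\diam K$ (this is the content of Lemma \ref{th3.0}); in particular $K$ is an $\alpha$-set and Theorem \ref{th7.3} applies with $\mu=\nu$. Applying Tonelli's theorem to the non-negative integrand in \eqref{eq7.4} with $\sigma=\beta/2$,
\[
\norm_{2,2}^{\alpha,\beta/2}(u)=\int_{K}\int_{K}\bigl(u(\xi)-u(\eta)\bigr)^{2}\Bigl(\int_{|\xi-\eta|}^{\infty}r^{-1-\alpha-\beta}\,dr\Bigr)d\nu(\xi)\,d\nu(\eta)=\frac{1}{\alpha+\beta}\int_{K}\int_{K}\frac{\bigl(u(\xi)-u(\eta)\bigr)^{2}}{|\xi-\eta|^{\alpha+\beta}}\,d\nu(\xi)\,d\nu(\eta),
\]
and comparison with Theorem \ref{th7.3} gives \eqref{eq7.8}, namely $\en_{K}[u]\asymp\norm_{2,2}^{\alpha,\beta/2}(u)$. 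Because $\dom_{K}=\{u\in L^{2}(K,\nu):\en_{K}[u]<\infty\}$ by Theorem \ref{th7.2} and $\Lambda_{2,2}^{\alpha,\beta/2}=\{u\in L^{2}(K,\nu):\norm_{2,2}^{\alpha,\beta/2}(u)<\infty\}$ by definition, this comparison forces $\dom_{K}=\Lambda_{2,2}^{\alpha,\beta/2}$, with the $\en_{1}$-norm on $\dom_{K}$ equivalent to $\|\cdot\|_{\Lambda_{2,2}^{\alpha,\beta/2}}$.

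Next I would run through the Dirichlet-form axioms. As remarked before the theorem, $\en_{K}$ is symmetric, non-negative definite, closed and Markovian: these follow from the expression in Theorem \ref{th7.2}. Closedness (equivalently, completeness of $\Lambda_{2,2}^{\alpha,\beta/2}$ in $\|\cdot\|_{\Lambda_{2,2}^{\alpha,\beta/2}}$) is obtained by passing to an a.e.-convergent subsequence of an $\en_{1}$-Cauchy sequence and applying Fatou's lemma to the double integral; Markovianity holds since every normal contraction $T$ satisfies $|Tu(\xi)-Tu(\eta)|\le|u(\xi)-u(\eta)|$ and $|Tu|\le|u|$. The remaining point is that $\dom_{K}$ is dense in $L^{2}(K,\nu)$, and this is exactly where $\beta<\beta^{*}$ is used: by the definition \eqref{eq7.7} of $\beta^{*}$, $\Lambda_{2,2}^{\alpha,\beta/2}\cap C(K)$ is dense in $C(K)=C_{0}(K)$ in the uniform norm, and since $\nu$ is a finite Radon measure with $\supp\nu=K$, $C(K)$ is dense in $L^{2}(K,\nu)$; hence $\dom_{K}=\Lambda_{2,2}^{\alpha,\beta/2}$ is dense in $L^{2}(K,\nu)$, and $(\en_{K},\dom_{K})$ is a Dirichlet form.

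For non-locality, fix distinct $\xi_{0},\eta_{0}\in K$ and $\epsilon>0$ so small that $\overline{B(\xi_{0};\epsilon)}$ and $\overline{B(\eta_{0};\epsilon)}$ are disjoint. Approximating a continuous bump at $\xi_{0}$ supported in $B(\xi_{0};\epsilon)$ by an element of the dense set $\Lambda_{2,2}^{\alpha,\beta/2}\cap C(K)$ and truncating (the truncation remains in $\dom_{K}$ by the contraction property above), one produces $u\in\dom_{K}\cap C(K)$ with $u\ge0$, $u\not\equiv0$ and $\supp u\subset\overline{B(\xi_{0};\epsilon)}$; likewise $v\in\dom_{K}$ with $v\ge0$, $v\not\equiv0$, $\supp v\subset\overline{B(\eta_{0};\epsilon)}$. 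These have compact disjoint supports, so expanding the polarized form of Theorem \ref{th7.2} and using the symmetry of $\Theta$,
\[
\en_{K}(u,v)=-\,m(\vartheta)\int_{K}\int_{K}u(\xi)\,v(\eta)\,\Theta(\xi,\eta)\,d\nu(\xi)\,d\nu(\eta)<0,
\]
because $\Theta>0$ off the diagonal and $u,v$ are non-negative and not $\nu$-a.e.\ zero. Hence $(\en_{K},\dom_{K})$ is not local, i.e.\ it is a non-local Dirichlet form.

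The \textbf{main obstacle} is mild: once Theorem \ref{th7.3} is available, the Fubini identity is routine and the quantity $\beta^{*}$ is defined precisely so that $\beta<\beta^{*}$ delivers the required density. The one place that demands care is upgrading the seminorm comparison $\en_{K}\asymp\norm_{2,2}^{\alpha,\beta/2}$ to the \emph{equality of domains} $\dom_{K}=\Lambda_{2,2}^{\alpha,\beta/2}$; this rests on the characterization $\dom_{\mathcal M}=\{u\in L^{2}:\en_{\mathcal M}[u]<\infty\}$ from Theorem \ref{th7.2}, rather than on a mere estimate of seminorms on an a priori common domain.
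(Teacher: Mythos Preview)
Your proposal is correct and follows essentially the same route as the paper. The paper's own treatment is minimal: it cites \cite{St} for \eqref{eq7.8} and the Besov-space identification, having already remarked in the paragraph before the theorem that the symmetry, closedness and Markov property are clear and that only density in $L^2(K,\nu)$ remains, which is exactly what the definition of $\beta^*$ supplies when $\beta<\beta^*$. Your Tonelli computation simply makes the content of the citation explicit, and your non-locality argument spells out what the paper leaves implicit; the only cosmetic point is that the bump-function construction (approximate in $C(K)$, then apply the normal contraction $t\mapsto(t-\delta)_+$ to force the support) is a standard maneuver rather than anything delicate.
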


\medskip
The proof of \eqref{eq7.8} and that $\dom_K$ is the Besov space $\Lambda_{2,2}^{\alpha, \beta/2}$ are in \cite{St}. The following proposition deals with the regularity of the induced Dirichlet form.

\medskip

\begin{proposition} \label{th7.9}
Let $\{S_i\}_{i=1}^N$ be an IFS of contractive similitudes with the OSC, \  and let $K$ be the self-similar set. For a $\lambda$-NRW on the augmented tree $(X, {\mathfrak E})$, if either (i) $\lambda \in (r^2,1)$, or (ii) $\alpha < \beta^*$ and $\lambda \in (r^{\beta^*},r^\alpha)$, then the induced form $({\mathcal E}_K, {\mathcal D}_K)$ is a regular non-local Dirichlet form on $L^2(K,\nu)$.

\vspace{0.1cm}

 Moreover, if $2 \leq \alpha<\beta^*$ and $\lambda \in [r^\alpha,r^2]$, let ${\mathcal D}_K^* = \overline {C(K) \cap {\mathcal D}_K}$, where the closure is taken under the norm $||\cdot||_{\Lambda_{2,2}^{\alpha, \beta/2}}$. Then $({\mathcal E}_K, {\mathcal D}_K^*)$ is a regular non-local Dirichlet form on $L^2(K,\nu)$.
\end{proposition}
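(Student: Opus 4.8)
The plan is to observe first that in all three cases the exponent $\beta=\log\lambda/\log r$ lies strictly below $\beta^*$: in (i), $\lambda\in(r^2,1)$ forces $\beta\in(0,2)$ while $\beta^*\ge2$ always; in (ii), $\lambda\in(r^{\beta^*},r^\alpha)$ forces $\alpha<\beta<\beta^*$; and in the ``moreover'' case $\lambda\in[r^\alpha,r^2]$ forces $\beta\in[2,\alpha]$ with $\alpha<\beta^*$. Hence Theorem \ref{th7.8} already supplies that $(\en_K,\dom_K)$ is a non-local Dirichlet form on $L^2(K,\nu)$, with $\dom_K=\Lambda_{2,2}^{\alpha,\beta/2}$ and with the form norm $\en_{K,1}=(\en_K[\cdot]+\|\cdot\|_{L^2}^2)^{1/2}$ equivalent to $\|\cdot\|_{\Lambda_{2,2}^{\alpha,\beta/2}}$. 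In cases (i) and (ii) it then remains only to verify the two requirements of Definition \ref{th7.4} for regularity, and in the last case to first restrict the domain and re-check the Dirichlet property. Throughout I would use that sup-norm density of $\dom_K\cap C(K)$ in $C(K)$ is, by the defining property \eqref{eq7.7} of $\beta^*$, precisely the assertion $\beta<\beta^*$ (which holds in all three cases).

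For case (i), $\beta\in(0,2)$, the sup-norm density just mentioned holds, so it remains to show $\en_{K,1}$-density of $\dom_K\cap C(K)$ in $\dom_K$. Here I would invoke that for $\beta<2$ every Lipschitz function $f$ on $K$ lies in $\Lambda_{2,2}^{\alpha,\beta/2}$: using $\nu(B(\xi,r))\asymp r^\alpha$ and a dyadic decomposition of $\{|\xi-\eta|<r\}$ one obtains $\iint_{\{|\xi-\eta|<r\}}(f(\xi)-f(\eta))^2\,d\nu\,d\nu\lesssim r^{\alpha+2}$ for $r\le\diam K$, whence $\norm_{2,2}^{\alpha,\beta/2}(f)\lesssim\int_0^{\diam K}r^{1-\beta}\,dr<\infty$; and such functions---in particular the restrictions to $K$ of $C^\infty(\mathbb R^d)$, via the Jonsson--Wallin realisation of $\Lambda_{2,2}^{\alpha,\beta/2}$ as a trace space of a Besov space on $\mathbb R^d$ together with the density of smooth functions there---are $\|\cdot\|_{\Lambda_{2,2}^{\alpha,\beta/2}}$-dense in $\Lambda_{2,2}^{\alpha,\beta/2}$ when $\beta<2$. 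This is the situation handled in \cite{CK,St}; see also \cite{JW}. For case (ii), $\alpha<\beta<\beta^*$, regularity is essentially automatic: since $\beta>\alpha$ the Besov embedding on the $\alpha$-set $K$ gives $\dom_K=\Lambda_{2,2}^{\alpha,\beta/2}\hookrightarrow C(K)$ (indeed into a H\"older space), so $\dom_K\cap C(K)=\dom_K$ and the $\en_{K,1}$-density is trivial, while the sup-norm density of $\dom_K$ in $C(K)$ is again $\beta<\beta^*$.

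In the ``moreover'' case, $\beta\in[2,\alpha]$ and $2\le\alpha<\beta^*$, the form $(\en_K,\dom_K)$ is still a non-local Dirichlet form by Theorem \ref{th7.8}, but now $\beta\ge2$ rules out the Lipschitz argument and $\beta\le\alpha$ permits discontinuous members of $\dom_K$, so $\dom_K\cap C(K)$ need not be $\en_{K,1}$-dense in $\dom_K$. I would therefore pass to $\dom_K^*=\overline{C(K)\cap\dom_K}$, the closure in $\|\cdot\|_{\Lambda_{2,2}^{\alpha,\beta/2}}$ (equivalently in the $\en_{K,1}$-norm), and check: (a) $(\en_K,\dom_K^*)$ is a Dirichlet form---being a closed subspace of the Hilbert space $(\dom_K,\en_{K,1})$ it is a symmetric, non-negative, closed form; it is densely defined in $L^2(K,\nu)$ because $\dom_K^*\supseteq C(K)\cap\dom_K$ is sup-norm dense in $C(K)$ by $\beta<\beta^*$ and hence $L^2(K,\nu)$-dense; and it is Markovian because, for $u\in\dom_K^*$, the unit contraction $\bar u=(0\vee u)\wedge1$ lies in $\dom_K$ with $\en_K[\bar u]\le\en_K[u]$, and if $u_n\in C(K)\cap\dom_K$ with $u_n\to u$ in $\en_{K,1}$-norm then $\bar u_n\in C(K)\cap\dom_K$, $\bar u_n\to\bar u$ in $L^2(K,\nu)$, and $\{\bar u_n\}$ is $\en_{K,1}$-bounded, so by the standard convex-combination (Banach--Saks) argument for Dirichlet forms (cf. \cite{FOT}) suitable finite convex combinations of the $\bar u_n$---still in $C(K)\cap\dom_K\subseteq\dom_K^*$---converge to $\bar u$ in the $\en_{K,1}$-norm, whence $\bar u\in\dom_K^*$; (b) $(\en_K,\dom_K^*)$ is regular---since $\dom_K^*\subseteq\dom_K$ we have $\dom_K^*\cap C(K)=\dom_K\cap C(K)\supseteq C(K)\cap\dom_K$, which is $\en_{K,1}$-dense in $\dom_K^*$ by construction and sup-norm dense in $C(K)$ by $\beta<\beta^*$.

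The routine ingredients are the two defining properties of $\beta^*$ and two standard facts about Besov spaces on $\alpha$-sets (density of Lipschitz functions in $\Lambda_{2,2}^{\alpha,\beta/2}$ when $\beta<2$, and $\Lambda_{2,2}^{\alpha,\beta/2}\subseteq C(K)$ when $\beta>\alpha$), which I would cite rather than reprove. I expect the main obstacle to be the ``moreover'' part---confirming that passing to the truncated domain $\dom_K^*$ does not destroy the Markovian property, which is exactly what forces the $L^2$-bounded-plus-convergent convex-combination argument; the remainder is bookkeeping with $\beta<\beta^*$ and the equivalence of norms from Theorem \ref{th7.8}.
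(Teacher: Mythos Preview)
Your proof is correct and follows essentially the same strategy as the paper's: for (i) you invoke the Lipschitz/trace-space density results (what the paper cites as \cite[Theorem~3]{St}); for (ii) you use the H\"older embedding $\Lambda_{2,2}^{\alpha,\beta/2}\hookrightarrow C(K)$ when $\beta>\alpha$ (the paper cites \cite[Theorem~4.11]{GHL1}); and for the ``moreover'' part you restrict to $\dom_K^*$ and verify regularity.

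There are two minor differences in execution worth noting. First, for the sup-norm density of $C(K)\cap\dom_K$ in $C(K)$ in the ``moreover'' case, you appeal directly to $\beta<\beta^*$ and the monotonicity of $\Lambda_{2,2}^{\alpha,\beta/2}$ in $\beta$, whereas the paper introduces an auxiliary exponent $\beta_0=(\alpha+\beta^*)/2$ and uses the chain $\dom_K^{(\beta_0)}\subset C(K)\cap\dom_K$ via the H\"older embedding at $\beta_0$; your route is more direct and avoids this detour. Second, you explicitly verify that the restricted form $(\en_K,\dom_K^*)$ remains Markovian via the Banach--Saks/convex-combination argument, while the paper is terse here and implicitly relies on standard Dirichlet-form theory (the fact that the $\en_1$-closure of a core of continuous functions in a Markovian form yields a regular Dirichlet form, cf.\ \cite[Theorem~3.1.1]{FOT}). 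Your extra care here is warranted, since without it the Markovianity of the restriction is not entirely obvious.
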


\begin{proof} It is well-known that $K$ equipped with Euclidean metric and $\alpha$-Hausdorff measure is an $\alpha$-set. As $\beta^* \geq 2$, the assumption $\lambda \in (r^2, 1)$ in (i) implies $0< \beta< 2 (\leq \beta^*)$. It follows from \cite[Theorem 3]{St} that the Dirichlet form $(\en_K, \dom_K)$ is regular.

\vspace {0.1cm}

For (ii), we have $\alpha <\beta< \beta^*$, and the domain ${\mathcal D}_K$ is embedded into the Lipschitz space of H\"older exponent $(\beta -\alpha)/2$ (see \cite[Theorem 4.11]{GHL1}). Hence $({\mathcal E}_K, {\mathcal D}_K)$ is regular.

For the last part, we have  $2\leq \beta \leq \alpha < \beta^*$.  Let $\beta_0 = (\alpha + \beta^*)/2$. Then $\alpha < \beta_0 < \beta^*$. Hence by the above paragraph, ${\mathcal D}_K^{(\beta_0)}$ consists of certain Lipschitz functions of order $(\beta_0 -\alpha)/2$, and
$
 {\mathcal D}_K^{(\beta_0)} = C(K) \cap {\mathcal D}_K^{(\beta_0)} \subset C(K) \cap {\mathcal D}_K \subset {\mathcal D}_K^*.
$
This implies that $C(K) \cap {\mathcal D}_K$ is dense in $L^2(K, \nu)$ and in $C(K)$, and $({\mathcal E}_K, {\mathcal D}_K^*)$ is regular.
\end{proof}

\medskip

\noindent {\it Remark.} In the proof of the last part, we cannot prove $ C(K)\cap {\mathcal D}_K$ is dense in $\dom_K$ with the $\en_1$-norm. Hence we use $ {\mathcal D}_K^\ast = \overline {C(K)\cap {\mathcal D}_K}$ instead of the original $\dom_K$. Also we do not know the regularity of the Dirichlet form for $2 \leq \beta< \beta^* \leq \alpha$.

\medskip

By \cite[Theorems 7.2.1, 7.2.2]{FOT}, we know that a regular Dirichlet form $(\mathcal E_K, \mathcal D_K)$ on $L^2(K,\nu)$ generates an associated Hunt jump process with transition density function (heat kernel) $p(t,\xi,\eta)$. For $0<\beta<2$ , i.e., $\lambda \in (r^2,1)$, it has been shown in \cite{CK} that the heat kernel satisfies the following estimate:
$$
p(t,\xi,\eta) \asymp \min \Big \{ t^{-\alpha/\beta},\, \frac{t}{|\xi-\eta|^{\alpha+\beta}} \Big \}, \quad \xi,\eta \in K, \ 0< t \leq 1.
$$
We do not have estimates of $p(t,\xi,\eta)$ for $\alpha<\beta<\beta^*$ or for other cases in general.

\medskip

The Besov spaces at the critical exponent $\beta^*$ are particularly important, and are not completely understood. In fact, there is another class of Besov spaces $\Lambda_{2, \infty}^{\alpha, \sigma}$ involved (see \cite{GHL1,GHL2}). For example on ${\mathbb R}^d$, $\beta^* =2$, and we have $\Lambda_{2,2}^{d, 1}({\mathbb R}^d) = \{0\}$, but $\Lambda^{d, 1}_{2,\infty}$ equals the Sobolev space $W^1_2 ({\mathbb R}^d)$, the domain of classical Dirichlet form that generates the Gaussian heat kernel. Similar situations hold for self-similar sets (and more general metric measure spaces) that admit local Dirichlet forms and subgaussian kernels \cite{GHL1,GHL2}. In forthcoming paper \cite{KL}, we will give a more detailed discussion, and provide a criterion to determine the exponent $\beta^*$.

\bigskip

\noindent {\bf Acknowledgements}: The authors would like to thank Professors A.~Grigor'yan, J.X.~Hu, J.~Kigami and T.~Kumagai for many valuable discussions and suggesting some references, and to  Professor S.M.~Ngai for going through the manuscript carefully. They are also indebted to the referee for  many constructive comments which helped improve the presentation of the paper. Part of the work was carried out while the second author was visiting the University of Pittsburgh, he is grateful to Professors C.~Lennard and J.~Manfredi for the arrangement of the visit.

\bigskip

\bigskip
\bigskip

\noindent SHI-LEI KONG, Department of Mathematics, The Chinese University of Hong Kong, Hong Kong\\
slkong@math.cuhk.edu.hk

\bigskip
\noindent KA-SING LAU, Department of Mathematics, The Chinese University of Hong Kong, Hong Kong\\
kslau@math.cuhk.edu.hk

\bigskip
\noindent TING-KAM LEONARD WONG, Department of Mathematics, University of Southern California, Los Angeles, CA 90089 USA\\
tkleonardwong@gmail.com

\end{document}